\numberwithin{equation}{section}
\newcommand{\R}{\mathbb{R}}
\newcommand{\ps}[2]{\left\langle#1\middle\vert#2\right\rangle}
\newcommand{\ton}[1]{\left(#1\right)}
\newcommand{\qua}[1]{\left[#1\right]}
\newcommand{\abs}[1]{\left|#1\right|}
\newcommand{\Ric}{\operatorname{Ric}}
\newcommand{\V}{\operatorname{Vol}}
\newcommand{\Vol}{\text{Vol}}
\newcommand{\pdw}{\dot w ^{(p-1)}}
\newcommand{\pw}{w^{(p-1)}}
\newcommand{\pG}{G^{(p-1 )}}
\newcommand{\pu}{ u^{(p-1)}}
\newcommand{\pf}{Pr\"{u}fer }
\newcommand{\sinp}{\operatorname{sin_p}}
\newcommand{\cosp}{\operatorname{cos_p}}
\newcommand{\dN}{\mathds{N}}
\newtheorem{prop}{Proposition}[section]
\newtheorem{proposition}[prop]{Proposition}
\newtheorem{theorem}[prop]{Theorem}
\newtheorem{definition}[prop]{Definition}
\newtheorem{lemma}[prop]{Lemma}
\newtheorem{remark}[prop]{Remark}
\newtheorem{corollary}[prop]{Corollary}
\begin{document}
\title{Sharp estimates on the first eigenvalue of the p-Laplacian with negative Ricci lower bound}
\author{Aaron Naber and Daniele Valtorta}

\date{\today}

\maketitle
\begin{abstract}
 We complete the picture of sharp eigenvalue estimates for the $p$-Laplacian on a compact manifold by providing {\it sharp} estimates on the first nonzero eigenvalue of the nonlinear operator $\Delta_p$ when the Ricci curvature is bounded from below by a negative constant. We assume that the boundary of the manifold is convex, and put Neumann boundary conditions on it. The proof is based on a refined gradient comparison technique and a careful analysis of the underlying model spaces.
\end{abstract}


\tableofcontents

\section{Introduction}
Let $M$ be an $n$-dimensional compact Riemannian manifold. For a function $u\in W^{1,p}(M)$, its $p$-Laplacian is defined by
\begin{gather}
  \Delta_p u\equiv \operatorname{div}(\abs{\nabla u}^{p-2} \nabla u)\ ,
\end{gather}
where the equality is in the weak $W^{1,p}(M)$ sense. Following standard convention, we will denote the first positive eigenvalue of this operator as $\lambda_{1,p}$, assuming Neumann boundary conditions if $\partial M \neq \emptyset$.  The number $\lambda_{1,p}$ may be characterized variationally in terms of a Poincar\'e inequality as the minimizer
\begin{gather*}
 \lambda_{1,p}=\inf\left\{\frac{\int_M \abs{\nabla u}^p d\V}{\int_M \abs u^p d\V} \ \text{ with }u\in M \ s.t. \ \ \int_M \abs u ^{p-2} u \ d\V=0\right\} \ .
\end{gather*}
In particular, using standard variational techniques we see that $\lambda_{1,p}$ is the smallest positive real number such that there exists a nonzero $u\in W^{1,p}(M)$ satisfying in the weak sense
\begin{gather}\label{eq_plap}
\begin{cases}
 \Delta_p(u)= -\lambda_{1,p} \abs u ^{p-2} u \ \ \ \ \ &on \ M\\
 \ps{\nabla u}{\hat n}=0 \ \ \  \ \ \ \ &on \ \partial M \, .
\end{cases}
\end{gather}
In this work  we prove the sharp estimate on $\lambda_{1,p}$ assuming that the Ricci curvature of the manifold is bounded below by $(n-1)k<0$. This completes the picture of the sharp estimates for $\lambda_{1,p}$. Indeed, for $k>0$, the generalized Obata's theorem has been obtained in \cite{matei}, while for $k=0$ the sharp estimate with characterization of equality has been proved in \cite{svelto}.

In the linear case, i.e., assuming $p=2$, the sharp estimate on $\lambda_{1,2}$ has been proved in \cite{kro} using a technique based on a refined gradient comparison theorem for an eigenfunction $u$ of the Laplacian.  Later, this technique was adapted by D. Bakry and Z. Qian in \cite{new} to obtain eigenvalue estimates for weighted linear Laplace operators with assumptions on the Bakry-Emery Ricci curvature.  A variational formula for $\lambda_{1,2}$ has been derived in \cite{chen} for a weighted Laplacian by using a coupling method, which also implies the sharp comparison theorem presented later in \cite{new}, see also \cite{chen-} for some earlier results proved with the same argument. 

A gradient comparison theorem in the spirit of \cite{kro} was first introduced for $p$-Laplace operators in \cite{svelto} under the assumption of nonnegative Ricci curvature, and here we introduce such a gradient comparison formula in Section \ref{s:GradientComparison} for $k<0$.  In each case one constructs an invertible one-dimensional function $w:[a,b]\subset \R \to \R$ satisfying a particular ODE.  If $u(M)\subset w[a,b]$, then it is possible to estimate
\begin{gather}\label{e:gradientestimate}
 \abs{\nabla u}(x)\leq \dot w \big(w^{-1} (u(x))\big)\, .
\end{gather}
The proof involves a careful use of a maximum principle.  In order to generalize to the nonlinear case the maximum principles needed for (\ref{e:gradientestimate}), we will use the linearized $p$-Laplace operator and a generalized $p$-Bochner formula (see Section \ref{sec_pboch} for the details).

After deriving an estimate of the form \eqref{e:gradientestimate} in Section \ref{s:GradientComparison}, the primary technical work of this paper is then to study the properties of the one dimensional models $w(t)$ and their relations to the properties on the eigenfunction $u$. In particular, to derive {\it sharp} estimates it is necessary to find a model $w$ such that $u(M)=w[a,b]$. In order to prove that this is always possible, we use a volume comparison argument described in Section \ref{sec_max}.

The study of the one-dimensional model will be carried out using a version of the \pf transformation introduced in Section \ref{sec_1d_neg}, a technique which will allow us to deal comfortably with the nonlinearity of the model equations (if $p\neq 2$). In particular, in Section \ref{s:diamter} we will use this tool and the convexity properties of our model equations to derive a lower bound on the diameters of the various model functions, defined as the distance between two consecutive zeros of $\dot w$.

Using a standard geodesic argument, an easy consequence of the gradient comparison theorem is a comparison between the diameter $d$ of the manifold $M$ and the diameter $\delta$ of the model function $w$. As we will see, $\delta$ depends monotonically on $\lambda$, thus the sharp estimate can be obtained by inverting the estimate on $d$ (see Section \ref{sec_main}).

It is worth mentioning that some nonsharp lower bounds for $\lambda_{1,p}$ have already been proved assuming negative lower bounds on the Ricci curvature. In \cite{matei}, the author obtains lower and upper bounds on $\lambda_{1,p}$ as a function of Cheeger's isoperimetric constant (see in particular \cite[Theorems 4.1, 4.2 and 4.3]{matei}). Among others, the explicit (non sharp) lower bound $\lambda_{1,p} \geq C_1(n,k,p) \operatorname{exp}[-(1+C_2(n,k,p) d)] d^{-p}$, has been proved by W. Lin-Feng in \cite{wang}. It is also worth mentioning that in the Euclidean setting, using a completely different approach based on variational techniques, a sharp estimate on the $p$-Poincar\'e constant is obtained in \cite{carlo}, see also \cite{carlo2} for the weighted case.

To state the main theorem let us first state the relevant one dimensional models that will be used as a comparison tool.

\begin{definition}\label{deph_l}
For each $k<0$, $0<d<\infty$ and $n\in \dN$ let $\bar \lambda(n,k,d)$ denote the first positive Neumann eigenvalue on $[-d/2,d/2]$ of the eigenvalue problem
 \begin{gather*}
  \frac{d}{dt}\ton{\dot w^{(p-1)}} + (n-1)\sqrt{-k}\tanh\ton{\sqrt{-k} t} \dot w ^{(p-1)}+\bar \lambda(n,k,d) w^{(p-1)}=0\, .
 \end{gather*}
\end{definition}
\begin{remark}\label{rem_l}
We will prove that this is equivalent to finding the unique value of $\bar \lambda$ such that the solution of
\begin{gather*}
 \begin{cases}
  \dot \phi = \ton{\frac{\bar \lambda}{p-1}}^{1/p} + \frac{(n-1)\sqrt{-k}}{p-1}\tanh\ton{\sqrt{-k} t}  \cosp^{(p-1)}(\phi) \sinp(\phi)\\
  \phi(0)=0
 \end{cases}
\end{gather*}
satisfies $\phi(d/2) = \pi_p/2$, where $\sinp$, $\cosp$ and $\pi_p$ are defined in Section \ref{sec_1d_neg}.
\end{remark}
\begin{remark}
An expression for $\bar \lambda(n,k,d)$ in terms of elementary functions is not clear.
\end{remark}

The main theorem in this work is now following.
\begin{theorem}
 Let $M$ be an $n$-dimensional complete Riemannian manifold with Ricci curvature bounded from below by $(n-1)k \leq 0$, diameter $d<\infty$ and with possibly empty convex $C^2$ boundary.  Then we have the sharp estimate:
 \begin{gather*}
  \lambda_{1,p}\geq \bar \lambda(n,k,d)\, .
 \end{gather*}
\end{theorem}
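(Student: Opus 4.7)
My plan is to follow the three-step strategy outlined in the introduction: first, derive a pointwise gradient comparison between a first eigenfunction $u$ and a one-dimensional model $w$; second, choose the model so that $u(M) = w([a,b])$; third, convert the gradient comparison into a diameter comparison via a geodesic argument, and invert it via the monotone dependence of the model's diameter on the eigenvalue parameter.

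\textbf{Gradient comparison.} Let $u$ be a nontrivial eigenfunction associated with $\lambda=\lambda_{1,p}$, normalized so that its range sits inside a fixed subinterval of $[-1,1]$. I aim to build a one-dimensional $w$ on some interval $[a,b]$ satisfying the model ODE
\begin{gather*}
\frac{d}{dt}\bigl(\dot w^{(p-1)}\bigr) + (n-1)\sqrt{-k}\tanh(\sqrt{-k}\,t)\,\dot w^{(p-1)} + \lambda_{1,p}\, w^{(p-1)} = 0,
\end{gather*}
with $\dot w>0$ on $(a,b)$ and $u(M)\subseteq w([a,b])$, and to establish the pointwise estimate \eqref{e:gradientestimate}. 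Following the approach of \kr adapted to the nonlinear setting, I would apply a maximum principle to the function $F(x)=|\nabla u|^p(x)-\dot w^p(w^{-1}(u(x)))$: at a maximum one differentiates twice using the \emph{linearized} $p$-Laplace operator together with the generalized $p$-Bochner formula from Section~\ref{sec_pboch}. The Ricci bound $(n-1)k$ produces exactly the $\tanh(\sqrt{-k}\,t)$ drift, while the convex boundary combined with the Neumann condition rules out interior maxima of $F$ on $\partial M$ via a Reilly-type boundary term.

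\textbf{Matching the range.} For sharpness I need $u(M) = w([a,b])$, not just inclusion, so that $\dot w$ vanishes at both endpoints. With $\lambda=\lambda_{1,p}$ fixed, the remaining freedom is a translation of $[a,b]$ along the $t$-axis, and I would fix this translation by the volume comparison argument of Section~\ref{sec_max}: comparing the distribution function of $u$ on $M$ against the one-dimensional weighted measure $\cosh^{n-1}(\sqrt{-k}\,t)\,dt$ on the model forces the correct normalization. Once this is achieved, join minimum and maximum points $x_\pm$ of $u$ by a minimizing geodesic $\gamma:[0,L]\to M$ with $L\leq d$. Along $\gamma$,
\begin{gather*}
\frac{d}{ds}\,w^{-1}\bigl(u(\gamma(s))\bigr)=\frac{\ps{\nabla u(\gamma(s))}{\dot\gamma(s)}}{\dot w\bigl(w^{-1}(u(\gamma(s)))\bigr)}\leq 1,
\end{gather*}
and integrating yields $b-a\leq L\leq d$.

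\textbf{Inversion.} Denote by $\delta(\lambda)$ the diameter $b-a$ of the matched model with parameter $\lambda$. The one-dimensional analysis in Section~\ref{s:diamter}, carried out via the \pf transformation of Remark~\ref{rem_l}, shows that $\delta(\lambda)$ is strictly decreasing in $\lambda$. Since $\delta(\bar\lambda(n,k,d))=d$ by definition, the inequality $\delta(\lambda_{1,p})\leq d$ from the geodesic argument immediately gives $\lambda_{1,p}\geq \bar\lambda(n,k,d)$.

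\textbf{Main obstacle.} The genuine difficulty concentrates in the gradient comparison of the first step. The operator $\Delta_p$ degenerates on the critical set $\{\nabla u=0\}$, so the maximum principle for $F$ must be justified by regularization and a careful analysis of what happens at critical points of $u$; simultaneously, the interval $[a,b]$ must include the endpoints where $\dot w=0$, which is precisely where $w$ fails to be smoothly invertible. A second delicate point is proving existence of the translation in step two: it is not a priori clear that a model exists with $u(M)=w([a,b])$, and this is where the convexity of $\partial M$ combines with the Ricci bound through a monotone weighted volume functional.
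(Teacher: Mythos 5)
Your overall strategy (gradient comparison, range matching, geodesic/diameter argument, inversion) is the paper's, but two essential ingredients are missing and the argument does not close without them. First, you work only with the $\tanh(\sqrt{-k}\,t)$ drift. The paper's gradient comparison is proved for any drift $T$ solving the Riccati equation $\dot T = T^2/(n-1)+(n-1)k$, and this generality is forced on you at the range-matching step: translating the $\tanh$ model only produces maxima $m(3,a)$ in the range $[m_2,1]$ (when $\alpha>\bar\alpha$), so if $u^\star=\max\{u\}<m_2$ no $\tanh$ model fits, and one must pass to the $\coth$ and constant-drift families (the $\sinh$- and $\exp$-warped models) to cover $[m(1,0),m_2]$. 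The volume comparison is then used not to ``fix the translation'' but to exclude the remaining bad case $u^\star<m(1,0)$: if it occurred, one could run the comparison with a model of artificial dimension $n'>n$ and deduce $\V(B(x_0,r))\leq c\,r^{n'}$ near a minimum point, contradicting the fact that $M$ is $n$-dimensional.

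Second, your inversion step has a gap. You set $\delta(\lambda)$ to be the diameter of the \emph{matched} model and assert $\delta(\bar\lambda(n,k,d))=d$ ``by definition''; but $\bar\lambda(n,k,d)$ is defined through the \emph{symmetric} (odd) solution of the $\tanh$ model on $[-d/2,d/2]$, whereas the matched model is in general asymmetric (it is the odd one only when $u^\star=1$) and has strictly larger diameter. The geodesic argument gives $d\geq\delta_{\mathrm{matched}}(\lambda_{1,p})$, and to conclude you need $\delta_{\mathrm{matched}}(\lambda_{1,p})\geq\bar\delta(\lambda_{1,p})$, where $\bar\delta$ is the \emph{minimum} of $\delta(i,a)$ over all three families and all base points $a$, together with the identification of this minimum as the diameter of the odd solution $w_{3,-\bar a}$. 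This is the content of Section~\ref{s:diamter} and requires a genuine argument: Jensen's inequality to handle the families $i=1,2$, and the convexity of $t\mapsto(\alpha-\beta T_3(t))^{-1}$ for $t,\beta\geq 0$ applied to the inverse Pr\"ufer phases to handle asymmetric $\tanh$ models. The monotonicity of the diameter in $\lambda$, which is all you invoke, does not substitute for this minimization over the model parameters.
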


Unlike the $k\geq 0$ case, the sharp lower bound $\bar \lambda(n,k,d)$ is never attained.  To see sharpness of this result we build a sequence of Riemannian manifolds $M_i$ with $\Ric\geq (n-1)k$ and $\text{diam}(M_i)=d_i\searrow d$ such that $\lambda_{1,p}(M_i)\leq\bar \lambda(n,k,d)$.  The smooth Riemannian manifolds $M_i$ are all warped products with smooth boundary, and as $i\to \infty$ we see that $\Vol(M_i)\to 0$ with the $M_i$ collapsing geometrically to the one dimensional interval $[-d/2,d/2]$. 

\begin{remark}\rm
 As mentioned in the introduction, the sharp lower bound for the case $k>0$ and $d=\pi/\sqrt k$ has been obtained in \cite{matei}. In particular, the author proves that, under these assumptions, $\lambda_{1,p}$ is bounded below by the first eigenvalue of the $n$-dimensional sphere of diameter $d$.
 
 It is easy to adapt the techniques used in this paper to give an alternative proof of this result. In particular, if $k>0$, for all $d\leq \pi/\sqrt k$ one can prove that $\lambda_{1,p}\geq \lambda(n,k,d)$, where in this case $\lambda(n,k,d)$ is determined by the one dimensional problem obtained by replacing $\sqrt{-k}\tanh(\sqrt{-k} t)$ with $\sqrt k \tan(\sqrt k t)$ in Definition \ref{deph_l} and Remark \ref{rem_l}.
\end{remark}

\section{Notation}
Throughout the article, we use the following notation. We have that $M$ denotes a compact Riemannian manifold of dimension $n$ whose Ricci curvature satisfies the lower bound
\begin{align}
\Ric\geq (n-1)k\, ,
\end{align}
where $k<0$.  We denote the diameter of $M$ by
\begin{align}
d\equiv \text{diam}(M)\, .
\end{align}

In the case that the boundary $\partial M$ is nonempty, we always assume it is (nonstrictly) convex. For any $p\in (1,\infty)$, we use the standard convention
\begin{gather*}
 u^{(p-1)}\equiv \abs u ^{p-2} u = \abs{u}^{p-1} \operatorname{sign} (u)\, .
\end{gather*}
We denote the Hessian of a function $u:M\to \R$ by $H_u$ and set
\begin{gather*}
 A_u = \frac{H_u\ton{\nabla u,\nabla  u}}{\abs{\nabla u}^2}
\end{gather*}
where $\nabla u \neq 0$.\\
We denote by $u$ a nonconstant solution of:
\begin{gather*}
 \Delta_p(u)=-\lambda_{1,p} u^{(p-1)}
\end{gather*}
with Neumann boundary conditions if necessary. We also define 
\begin{align}
u^\star \equiv \max\{u\}\, .
\end{align}

Recall that for some $\alpha>0$ we have that $u\in C^{1,\alpha}(M)\cap W^{1,p}(M)$, and elliptic regularity ensures that $u$ is a smooth function where $\nabla u \neq 0$ and $u\neq 0$. If $\nabla u(x)\neq 0$ and $u(x)=0$, then $u\in C^{3,\alpha}(U)$ if $p>2$ and $u\in C^{2,\alpha}(U)$ for $1<p<2$, where $U$ is a suitably small neighborhood of $x$. The standard reference for these results is \cite{reg}, where the problem is studied in local coordinates.

Regarding regularity issues, it is worth mentioning the very recent article \cite{teix}, which obtains studies the behaviour of the exponent $\alpha$ at points where $\nabla u =0$.

By an easy application of the divergence theorem, it is easy to see that
\begin{gather*}
 \int_M u^{(p-1)} d\Vol = -\lambda_{1,p}^{-1} \int_M \Delta_p (u) d\Vol = 0\, .
 \end{gather*}
Thus, without loss of generality, we rescale $u$ in such a way that
\begin{gather}
 \abs u \leq 1 \quad \quad \min\{u\}=-1 \quad \quad 0<\max\{u\}={u^\star}\leq 1\, .
\end{gather}

\section{Linearized p-Laplacian and p-Bochner formula}\label{sec_pboch}
In this section, we briefly recall some properties of the linearized $p$-Laplacian and a generalized version of the Bochner formula.

We start with the definition of the linearized $p$-Laplace operator. Quite common in recent literature, this operator has been used, for example, in \cite{kn,hui,svelto}.
\begin{definition}
 Given two functions $u,\eta$ we define:
\begin{gather*}
 P_u(\eta)\equiv \operatorname{div}\ton{(p-2) \abs{\nabla u}^{p-4}\ps{\nabla u}{\nabla \eta}\nabla u + \abs{\nabla u }^{p-2}\nabla \eta}=\\
= \abs{\nabla u}^{p-2}\Delta \eta +(p-2)\abs{\nabla u}^{p-4}H_\eta\ton{\nabla u,\nabla u}+ (p-2)\Delta_p(u)\frac{\ps{\nabla u}{\nabla \eta}}{\abs{\nabla u }^2}+ \\
+ 2(p-2)\abs{\nabla u}^{p-4}H_u\ton{\nabla u,\nabla \eta - \frac{\nabla u}{\abs{\nabla u}}\ps{\frac{\nabla u}{\abs{\nabla u}}}{\nabla \eta}} \ ,
\end{gather*} 
whenever $|\nabla u|\neq 0$.
\end{definition}
If $u$ is an eigenfunction of the $p$-Laplacian, this operator is defined pointwise only where the gradient of $u$ is non zero (and so $u$ is smooth in a neighborhood of the point) and it is easily proved that at these points it is strictly elliptic. For convenience, denote by $P^{II}_u$ the second order part of $P_u$, which is
\begin{gather}\label{deph_pu}
 {P_u}^{II}(\eta) \equiv \abs{\nabla u}^{p-2}\Delta \eta  +(p-2)\abs{\nabla u}^{p-4}H_\eta\ton{\nabla u, \nabla u} . 
\end{gather}

We cite from \cite{svelto} the following version of the Bochner formula.
\begin{proposition}[p-\textsc{Bochner formula}]\label{prop_pboch}
 Given $x\in M$, a domain $U$ containing $x$, and a function $u\in C^3(U)$, if $\nabla u \neq 0$ on $U$ we have
\begin{gather*}
 \frac{1}{p} P^{II}_u(\abs{\nabla u}^{p})=\abs{\nabla u}^{2(p-2)}\Big\{\abs{\nabla u}^{2-p}\Big[\ps{\nabla \Delta_p u}{\nabla u}-(p-2)A_u\Delta_p u\Big]+\abs{H_u}^2+p(p-2)A_u^2+ \operatorname{Ric}(\nabla u,\nabla u) \Big\} \ .
\end{gather*}
In particular this equality holds if $u$ is an eigenfunction of the $p$-Laplacian and $\nabla u |_x \neq 0$ and $u(x)\neq 0$. If $p\geq 2$, this results holds also where $u(x)=0$.
\end{proposition}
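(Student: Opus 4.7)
The plan is to reduce the identity to the classical Bochner formula via careful chain-rule bookkeeping, and then convert the linear quantities $\Delta u$ and $A_u$ on the right-hand side into the nonlinear $\Delta_p u$. Set $f := \abs{\nabla u}^2$, so that $\abs{\nabla u}^p = f^{p/2}$. The classical Bochner identity
\begin{gather*}
 \tfrac{1}{2}\Delta f = \abs{H_u}^2 + \ps{\nabla \Delta u}{\nabla u} + \Ric(\nabla u,\nabla u)
\end{gather*}
will be the only place where Ricci curvature enters; the rest is algebraic manipulation valid wherever $\nabla u \neq 0$.

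I would first compute the two pieces of
\begin{gather*}
 P_u^{II}(f^{p/2}) = f^{(p-2)/2}\Delta(f^{p/2}) + (p-2)f^{(p-4)/2}H_{f^{p/2}}(\nabla u, \nabla u)
\end{gather*}
via the chain rule, producing $\Delta(f^{p/2}) = \tfrac{p}{2}f^{(p-2)/2}\Delta f + \tfrac{p(p-2)}{4}f^{(p-4)/2}\abs{\nabla f}^2$ and a similar expression for $H_{f^{p/2}}(\nabla u,\nabla u)$ involving $H_f(\nabla u,\nabla u)$ and $\ps{\nabla f}{\nabla u}^2$. Two elementary identities feed into these: $\ps{\nabla f}{\nabla u} = 2 A_u f$ (so that last summand contributes $p(p-2)A_u^2 f^{p/2}$), and the covariant expansion $H_f(\nabla u, \nabla u) = 2\, u_{ijk} u^i u^j u^k + 2 \abs{H_u \nabla u}^2$ combined with the identity $u_{ijk}u^iu^ju^k = \ps{\nabla(A_uf)}{\nabla u} - 2\abs{H_u\nabla u}^2$, which follows from the product rule applied to $A_uf = H_u(\nabla u, \nabla u)$. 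Using $\abs{\nabla f}^2 = 4\abs{H_u\nabla u}^2$, the assembled $P_u^{II}(f^{p/2})$ contains two copies of $\abs{H_u\nabla u}^2$ with opposite signs that cancel exactly; this is the key algebraic fact and the only real bookkeeping obstacle.

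After the cancellation and a single application of Bochner to eliminate $\Delta f$, what remains is
\begin{gather*}
 \tfrac{1}{p}P_u^{II}(f^{p/2}) = f^{p-2}\Big\{\abs{H_u}^2 + \Ric(\nabla u,\nabla u) + p(p-2)A_u^2 + \ps{\nabla \Delta u}{\nabla u} + (p-2)\ps{\nabla A_u}{\nabla u}\Big\}\, ,
\end{gather*}
so the remaining task is matching the last two summands with $\abs{\nabla u}^{2-p}\bigl[\ps{\nabla \Delta_p u}{\nabla u} - (p-2)A_u\Delta_p u\bigr]$. For this I would differentiate the pointwise identity $\Delta_p u = f^{(p-2)/2}(\Delta u + (p-2)A_u)$, contract with $\nabla u$, and use $\ps{\nabla f}{\nabla u} = 2A_u f$ once more to obtain exactly this equality; substituting back yields the claimed $p$-Bochner identity. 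For the eigenfunction addendum, the regularity results recalled in Section~2 give $u \in C^{3,\alpha}$ wherever $\nabla u \neq 0$ and $u \neq 0$, so the derivation applies verbatim; when $p \geq 2$ the required regularity persists across $\cur{u=0}$ as well, extending the identity to those points by continuity.
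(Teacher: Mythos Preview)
The paper does not actually prove this proposition; it is stated without proof and attributed to \cite{svelto}. Your proposal, by contrast, supplies a complete and correct derivation: the chain-rule computations for $\Delta(f^{p/2})$ and $H_{f^{p/2}}(\nabla u,\nabla u)$ are right, the key cancellation of the two $\abs{H_u\nabla u}^2$ terms goes through exactly as you describe (via the identity $H_f(\nabla u,\nabla u)=2\ps{\nabla(A_uf)}{\nabla u}-2\abs{H_u\nabla u}^2$), and the final conversion from $(\Delta u,A_u)$ to $\Delta_p u$ is correct once you use $\ps{\nabla f}{\nabla u}=2A_u f$. The regularity remarks at the end match what the paper records in its notation section, so the eigenfunction addendum is justified as well.
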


In order to estimate $P^{II}_u(\abs{\nabla u}^p)$ from below, we also recall the following generalization of the curvature dimension inequality available for the Hessian of a smooth function (again, see \cite{svelto} for the details).
\begin{corollary}\label{cor_n}
For every $n\leq n' \in \R$, and for every point where $\nabla u \neq 0$ and $u\in C^2$, we have
\begin{gather*}
 \abs{\nabla u}^{2p-4}\ton{\abs{H_u}^2 + p(p-2)A_u^2}\geq\\
\geq \frac{(\Delta_p u)^2}{n'} + \frac{n'}{n'-1}\ton{\frac{\Delta_p u}{n' } -(p-1)\abs{\nabla u}^{p-2}A_u}^2 \ .
\end{gather*}
\end{corollary}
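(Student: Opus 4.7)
The plan is to reduce the pointwise inequality to a one-variable Cauchy--Schwarz estimate on the diagonal of the Hessian. At a point where $\nabla u \neq 0$, I would choose an orthonormal frame $e_1,\ldots,e_n$ with $e_1 = \nabla u/\abs{\nabla u}$, and set $H_{ij} = H_u(e_i,e_j)$. Then $A_u = H_{11}$, and expanding $\Delta_p u = \abs{\nabla u}^{p-2}(\Delta u + (p-2) A_u)$ gives
\begin{gather*}
\abs{\nabla u}^{2-p}\Delta_p u = (p-1)A_u + \sum_{i\geq 2} H_{ii}\,,
\end{gather*}
so the ``off-axial'' trace $S := \sum_{i\geq 2} H_{ii}$ is determined by the quantities appearing in the statement via $S = \abs{\nabla u}^{2-p}\Delta_p u - (p-1)A_u$.

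The main estimate is then the lower bound on $\abs{H_u}^2$ obtained by dropping the mixed components $H_{1i}$ for $i \geq 2$ and applying Cauchy--Schwarz to the $(n-1)$ remaining diagonal entries:
\begin{gather*}
\abs{H_u}^2 \geq H_{11}^2 + \sum_{i\geq 2} H_{ii}^2 \geq A_u^2 + \frac{S^2}{n-1} \geq A_u^2 + \frac{S^2}{n'-1},
\end{gather*}
where the last step uses $n' \geq n$. Multiplying by $\abs{\nabla u}^{2p-4}$ and invoking the elementary identity $1 + p(p-2) = (p-1)^2$ produces
\begin{gather*}
\abs{\nabla u}^{2p-4}(\abs{H_u}^2 + p(p-2)A_u^2) \geq (p-1)^2 (\abs{\nabla u}^{p-2}A_u)^2 + \frac{1}{n'-1}(\Delta_p u - (p-1)\abs{\nabla u}^{p-2}A_u)^2.
\end{gather*}

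All that remains is to match this with the right-hand side in the statement. Setting $X := (p-1)\abs{\nabla u}^{p-2}A_u$ and $Y := \Delta_p u$, the claim reduces to the purely algebraic identity
\begin{gather*}
X^2 + \frac{(Y-X)^2}{n'-1} = \frac{Y^2}{n'} + \frac{n'}{n'-1}\left(\frac{Y}{n'} - X\right)^2,
\end{gather*}
both sides equalling $(n'X^2 - 2XY + Y^2)/(n'-1)$. No real obstacle arises: the only genuine analytic content is the two-step Cauchy--Schwarz bound on the Hessian diagonal, and the identity $(p-1)^2 = 1 + p(p-2)$ ensures the estimate holds uniformly for $p > 1$ without a case split on the sign of $p-2$. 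The freedom to take $n' \geq n$ is used only to weaken $1/(n-1)$ to $1/(n'-1)$, providing flexibility when the corollary is later applied in the $p$-Bochner argument.
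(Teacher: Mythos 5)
Your proof is correct: the frame choice $e_1=\nabla u/\abs{\nabla u}$, the two-step bound $\abs{H_u}^2\geq A_u^2+S^2/(n-1)\geq A_u^2+S^2/(n'-1)$, the identity $1+p(p-2)=(p-1)^2$ (which indeed avoids any case split on the sign of $p-2$), and the final algebraic rearrangement all check out. The paper itself gives no proof of this corollary but simply cites \cite{svelto}, where the argument is essentially the same curvature--dimension computation you carried out, so your proposal matches the intended proof rather than offering a different route.
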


\section{One Dimensional Model}\label{sec_1d_neg}
In this section we introduce the one dimensional model functions that will be used in subsequent sections as a comparison for the eigenfunctions $u$ of the $p$-Laplacian.

For $n$, $k<0$ fixed define for $i=1,2,3$ the nonnegative functions $\tau_i$ on $I_i\subset \R$ by:
\begin{enumerate}
 \item $\tau_1 (t)=\operatorname{sinh}\ton{\sqrt {-k}\ t}$, defined  on $I_1=[0,\infty)$,
 \item $\tau_2 (t)=\operatorname{exp}\ton{\sqrt {-k}\ t}$ on $I_2=\R$ ,
 \item $\tau_3 (t)=\operatorname{cosh}\ton{\sqrt {-k}\ t}$ on $I_3=\R$ ,
\end{enumerate}
and let $\mu_i= \tau_i^{n-1}$.  For each $\tau_i$ and each $0< \epsilon\leq 1$ we can consider the Riemannian manifold defined by the warped product
\begin{align}
 M=[a,b]\times_{ \epsilon \tau_i} S^{n-1}\, ,
\end{align}
where the metric is given by
\begin{align}
g_M\equiv dr^2 +\epsilon^2\tau_i^2 g_{S^{n-1}}\, .
\end{align}
Let $(t,x), \ t\in [a,b], \ x\in S^{n-1}$ denote the product coordinates.  By some relatively standard computations \footnote{for the details, see for example \cite{petersen,milman}}, $M$ is a manifold whose Ricci curvature satisfies
\begin{align}
\Ric\geq (n-1)k\, \notag\\
\Ric(\partial_t,\partial_t) = (n-1)k\, ,
\end{align}
with $\mu_i$ measuring the volume of the radial slices. Indeed
\begin{gather}
 \Vol([c,d]\times_{\epsilon \tau_i} S^{n-1}) =\epsilon^{n-1}\Vol(S^{n-1}) \int_{c}^d  \mu_i(t) dt\, .
\end{gather}

Note that $[0,d]\times_{\tau_1} S^{n-1}$ is nonother than the geodesic ball of radius $d$ in the hyperbolic space.  Now let $T_i=-\frac{\dot \mu_i}{\mu_i}$, that is:
\begin{enumerate}
 \item $T_1 (t)=-(n-1)\sqrt{-k}\operatorname{cotanh}\ton{\sqrt{-k}\ t}$, defined on $I_1=(0,\infty)$,
 \item $T_2 (t)=-(n-1)\sqrt{-k}$, defined on $I_2=\R$,
 \item $T_3 (t)= -(n-1)\sqrt{-k}\operatorname{tanh}\ton{\sqrt{-k}\ t}$, defined on $I_3=\R$.
\end{enumerate}
Note that all functions $T_i$ satisfy
\begin{gather}\label{eq_T}
 \dot T= \frac{T^2}{n-1} + (n-1)k\, .
\end{gather}

Now we are ready to introduce our one dimensional model functions.
\begin{definition}\label{deph_1dm}
 Fix $\lambda>0$. Define the function $w=w^{p,\lambda}_{k,n,i,a}$ to be the solution to the initial value problem on $I_i$:
 \begin{gather}\label{eq_1dm_neg}
  \begin{cases}
   \frac{d}{dt} \dot w ^{(p-1)} - T_i \dot w ^{(p-1)} + \lambda w^{(p-1)} =0\\
   w(a)=-1 \quad \dot w (a)=0
  \end{cases}
 \end{gather}
where $a\in I_i$. Equivalently, $w^{p,\lambda}_{k,n,i,a}$ are the solutions to:
 \begin{gather}\label{eq_1dm_negmu}
  \begin{cases}
   \frac{d}{dt}\ton{\mu_i \dot w ^{(p-1)}} +\lambda \mu_i w^{(p-1)} =0\\
   w(a)=-1 \quad \dot w (a)=0
  \end{cases}
 \end{gather}
\end{definition}
\begin{remark}
\rm When some of the parameters $\lambda$, $p$, $k$, $n$, $i$, $a$ are fixed and there is no risk of confusion, we may often omit them.
\end{remark}

\begin{remark}\label{rem_w}
\rm Define on $M=[a,b]\times_{\tau_i} S^{n-1}$ the function $u(t,x)=w(t)$. It is easy to realize that $u$ solves the eigenvalue equation $\Delta_p(u)+\lambda u^{(p-1)}=0$ on $M$. Moreover, if $\dot w(b)=0$, then $u$ has Neumann boundary conditions on such a manifold.
\end{remark}

Now we prove existence, uniqueness and continuous dependence with respect to the parameters for the solutions of the IVP \eqref{eq_1dm_neg}. In order to do so, we introduce a version of the so-called \pf transformation (similar transformations are well-studied in nonlinear ODE theory, see for example \cite[Section 1.1.3]{dosly}). In a sense, we put $p$-polar coordinates on the phase plane $(w,\dot w)$ of the function $w$. 

Here we briefly recall the definition of the functions $\sinp$ and $\cosp$ (for more detailed references, see for example \cite[Section 1.1.2]{dosly} or \cite[Section 2]{svelto}).
\begin{definition}
For every $p\in (1,\infty)$, define the positive number $\pi_p$ by:
\begin{gather}
 \pi_p = \int_{-1}^1 \frac{ds}{\ton{1-s^p}^{1/p}} = \frac{2\pi}{p\sin(\pi/p)} \, .
\end{gather}
The $C^1(\R)$ function $\sinp:\R\to [-1,1]$ is defined implicitly on $[-\pi_p/2,3\pi_p/2]$ by:
\begin{gather*}
 \begin{cases}
t=\int_0^{\sin_p(t)} \frac{ds}{(1-s^p)^{1/p}} & \text{  if } t\in \qua{-\frac{\pi_p}{2},\frac{\pi_p}{2}}\\
\sinp(t) = \sinp(\pi_p-t) & \text{  if } t\in \qua{\frac{\pi_p}{2},\frac{3 \pi_p}{2}}
 \end{cases}
\end{gather*}
and is periodic on $\R$. Set also by definition $\cosp(x) = \frac d {dt} \sinp(t)$. The usual fundamental trigonometric identity can be generalized by:
\begin{gather*}
 \abs{\sinp(t)}^p + \abs{\cosp(t)}^p =1\, ,
\end{gather*}
and so it is easily seen that $\cosp^{(p-1)}(t)\in C^1(\R)$.
\end{definition}

\begin{definition}\label{deph_pf}
Let $\alpha= \ton{\frac{\lambda}{p-1}}^{1/p}$ and fix some $w=w_{k,n,i,a}$. Define the functions $e=e_{k,n,i,a}\geq 0$ and $\phi=\phi_{k,n,i,a}$ by:
\begin{gather}\label{eq_wpf}
 \alpha w = e \sinp (\phi) \ \ \ \dot w =e \cosp(\phi)\, ,
\end{gather}
or equivalently:
\begin{gather*}
  e\equiv \ton{\dot w ^p + \alpha^p w^p}^{1/p}\ \ \ \ \ \phi\equiv \operatorname{arctan_p} \ton{\frac{\alpha w}{\dot w}}\, .
\end{gather*}
Note that the variable $\phi$ is well-defined up to $\pi_p$ translations. 
\end{definition}

Let $w$ satisfy \eqref{eq_1dm_neg}. Differentiating, substituting and using equation \eqref{eq_1dm_neg} we get that $\phi$ and $e$ satisfy the following first order IVPs:
\begin{gather}\label{eq_pf_neg}
 \begin{cases}
  \dot \phi = \alpha - \frac{T}{p-1}\cosp^{p-1} (\phi)\sinp(\phi) \\
  \phi(a)_{\operatorname{mod} \pi_p}=-\frac{\pi_p}2  
 \end{cases}\\
 \begin{cases}\label{eq_pf_nege}
  \frac d {dt} \log(e) = \frac{\dot e}{e} =\frac{T}{(p-1)}\cosp^{p} (\phi)\\
  e(a)=\alpha
 \end{cases}
\end{gather}

Since both $\sinp$ and $(p-1)^{-1}\cosp^{p-1}$ are Lipschitz functions with Lipschitz constant $1$, it is easy to apply Cauchy's theorem and prove existence, uniqueness and continuous dependence on the parameters. Indeed, we have the following:
\begin{proposition}\label{prop_exun1}
 If $T=T_2,T_3$, for any $a\in \R$ there exists a unique solution to \eqref{eq_1dm_neg} defined on all $\R$. The solution $w$ is of class $C^1(\R)$ with $\dot w^{(p-1)}\in C^1(\R)$ as well. Moreover, the solution depends continuously on the parameters $a$, $n\in \R$ and $k<0$ in the sense of local uniform convergence of $w$ and $\dot w$ in $\R$.
\end{proposition}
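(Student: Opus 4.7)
\textbf{Proof plan for Proposition \ref{prop_exun1}.} The key difficulty in applying Cauchy--Lipschitz directly to \eqref{eq_1dm_neg} is that the map $x \mapsto x^{(p-1)}$ fails to be Lipschitz near $0$, so the equation is degenerate at points where $\dot w$ vanishes (in particular at $t=a$). The idea is to sidestep this degeneracy entirely by working in the \pf coordinates $(\phi,e)$ introduced in Definition \ref{deph_pf}, where the problem becomes well-behaved, and only afterwards reconstruct $w$.

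First I would observe that the initial conditions $w(a)=-1$, $\dot w(a)=0$ translate, via \eqref{eq_wpf}, into $e(a)=\alpha$ and $\phi(a)=-\pi_p/2$ (choosing this specific representative of the $\pi_p$-class). I would then study the decoupled IVP \eqref{eq_pf_neg} for $\phi$. The crucial point is that $\sinp$ and $(p-1)^{-1}\cosp^{(p-1)}$ are globally Lipschitz on $\R$ with Lipschitz constant $1$ (as recalled in the excerpt), so the map $\phi \mapsto \cosp^{(p-1)}(\phi)\sinp(\phi)$ is globally Lipschitz on $\R$. For $T=T_2$ the coefficient is a bounded constant, while for $T=T_3(t)=-(n-1)\sqrt{-k}\tanh(\sqrt{-k}\,t)$ it is a smooth function of $t$ bounded by $(n-1)\sqrt{-k}$. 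Thus the right-hand side of the $\phi$ equation is continuous in $t$ and globally Lipschitz in $\phi$ uniformly in $t$; moreover it is a priori bounded, so no blow-up in finite time is possible. The classical Picard--Lindel\"of theorem then yields a unique $C^1$ solution $\phi:\R\to\R$.

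Next I would insert this $\phi$ into \eqref{eq_pf_nege}, which is a linear ODE in $\log e$ whose right-hand side $\frac{T(t)}{p-1}\cosp^{p}(\phi(t))$ is continuous and bounded on $\R$. Hence $e$ is defined globally, is $C^1$, and remains strictly positive since it starts at $\alpha>0$. I would then define
\begin{gather*}
w(t) \equiv \alpha^{-1} e(t)\sinp(\phi(t)),
\end{gather*}
check using \eqref{eq_wpf} and the two ODEs that $\dot w(t)=e(t)\cosp(\phi(t))$, and verify by a direct differentiation that $w$ solves \eqref{eq_1dm_neg} with the prescribed initial data. Regularity of $\dot w^{(p-1)}=e^{p-1}\cosp^{(p-1)}(\phi)$ as a $C^1(\R)$ function follows because $\cosp^{(p-1)} \in C^1(\R)$ (again from the excerpt) and $e,\phi\in C^1(\R)$. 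Uniqueness for \eqref{eq_1dm_neg} reduces to uniqueness of $(\phi,e)$: any $C^1$ solution with $\dot w^{(p-1)} \in C^1$ gives, away from its zero set, a pair $(\phi,e)$ solving \eqref{eq_pf_neg}--\eqref{eq_pf_nege} with the prescribed data, and uniqueness for the Prüfer system propagates across zeros of $\dot w$ (which correspond to the regular points $\phi \equiv \pm \pi_p/2$).

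Finally, continuous dependence on $(a,n,k)$ is obtained by applying the standard continuous-dependence theorem to the $(\phi,e)$ system: the right-hand sides of \eqref{eq_pf_neg}--\eqref{eq_pf_nege} depend continuously on $(n,k)$ through $\alpha=\bigl(\lambda/(p-1)\bigr)^{1/p}$ and through $T_i$, and the initial data depend continuously on $(a,n,k)$, giving local uniform convergence of $\phi$ and $e$, hence of $w$ and $\dot w$, on compact subsets of $\R$. The main (mild) obstacle is the passage between the two formulations: one must take care that zeros of $\dot w$ do not create ambiguity in the Prüfer angle, which is handled by fixing $\phi(a)=-\pi_p/2$ and tracking $\phi$ continuously.
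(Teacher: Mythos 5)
Your proposal is correct and follows essentially the same route as the paper: the paper likewise passes to the \pf system \eqref{eq_pf_neg}--\eqref{eq_pf_nege}, notes that $\sinp$ and $(p-1)^{-1}\cosp^{p-1}$ are Lipschitz with constant $1$, and invokes Cauchy's theorem for existence, uniqueness and continuous dependence. You have merely spelled out the details (global solvability of the $\phi$ equation, positivity of $e$, reconstruction of $w$, and the passage across zeros of $\dot w$) that the paper leaves implicit.
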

The same argument work verbatim if $T=T_1$ as long as $a>0$, while the boundary case deserves some more attention. However, using standard ODE techniques, also in this case it is possible to prove existence, uniqueness and continuous dependence for the solution of \eqref{eq_1dm_neg}. Although with a different model function $T$, a similar argument is carried out for example in \cite[Section 3]{W}. Thus we have the following Proposition.
\begin{proposition}\label{prop_exun2}
 If $T=T_1$, for any $a>0$ there exists a unique solution to \eqref{eq_1dm_negmu} defined (at least) on $(0,\infty)$. The solution $w$ is of class $C^1(0,\infty)$ with $\dot w^{(p-1)}\in C^(0,\infty)$ as well.  
 
 Also if $a=0$, the solution is unique and belongs to $C^1[0,\infty)$. 
 Moreover, the solution depends continuously on the parameters $a\geq 0$, $\lambda>0$, $n\geq 1$ and $k<0$ in the sense of local uniform convergence of $w$ and $\dot w$ in $(0,\infty)$.
\end{proposition}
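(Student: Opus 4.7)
The plan is to pass to the Prüfer transformation \eqref{eq_wpf}, which converts the nonlinear IVP \eqref{eq_1dm_neg} into the first-order system \eqref{eq_pf_neg}--\eqref{eq_pf_nege}. The $\phi$-equation is self-contained and its right-hand side is Lipschitz in $\phi$ whenever $T$ is bounded, since $\sinp$ and $(p-1)^{-1}\cosp^{p-1}$ are globally $1$-Lipschitz on $\R$; once $\phi$ is known, $e$ is determined by a linear first-order ODE in $\log e$, and $w$ is recovered from $\alpha w = e\sinp(\phi)$. All difficulty is therefore concentrated in the singularity of $T_1(t) = -(n-1)\sqrt{-k}\coth(\sqrt{-k}\,t)$ as $t \to 0^+$.

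For $a > 0$, $T_1$ is smooth and bounded on every compact subinterval of $(0, \infty)$, so Cauchy--Lipschitz gives a unique local solution of the Prüfer system. The a-priori bounds $|\dot\phi| \le \alpha + \|T_1\|_{L^\infty([a,R])}/(p-1)$ and $|(\log e)'| \le \|T_1\|_{L^\infty([a,R])}/(p-1)$ on each compact $[a, R]$ rule out blow-up and extend the solution to all of $[a, \infty)$. Continuous dependence on $(a, \lambda, n, k)$ with $a > 0$ then follows from standard ODE continuity, since $T_1$ depends continuously on $(n, k)$ in $C^0_{\mathrm{loc}}(0, \infty)$.

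For $a = 0$, I would work directly with the divergence form \eqref{eq_1dm_negmu}. Integrating from $0$ to $t$ with $w(0) = -1$, $\dot w(0) = 0$ produces the equivalent fixed-point equation
\[
 w(t) = -1 + \int_0^t \Bigl(-\lambda\,\mu_1(s)^{-1}\int_0^s \mu_1(r)\, w^{(p-1)}(r)\,dr\Bigr)^{(1/(p-1))}\,ds.
\]
Using $\mu_1(s) = \sinh^{n-1}(\sqrt{-k}\,s) \sim (\sqrt{-k}\,s)^{n-1}$ as $s \to 0^+$, one checks that with $w \equiv -1$ the inner bracket equals $\lambda s/n + O(s^2)$, so the outer integrand is of order $s^{1/(p-1)}$ and integrable; in particular the initial condition $\dot w(0) = 0$ is automatically satisfied. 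I would then estimate that an $\epsilon$-perturbation of $w$ in $C^0([0,\delta])$ propagates to an $O(s)\cdot\epsilon$ perturbation of the inner bracket, whence a direct computation gives an $O(\delta^{p/(p-1)}\,\epsilon)$ perturbation of the whole integral on $[0, \delta]$, yielding a contraction on a small ball about $-1$ for $\delta$ small. This produces a unique local $C^1$ solution with $\dot w(0) = 0$, which glues with the $a > 0$ analysis at $t = \delta/2$ to extend on all of $[0, \infty)$.

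The main obstacle is the near-zero analysis for $a = 0$: one must simultaneously control the singular coefficient $T_1 \sim -(n-1)/t$ and the non-Lipschitz operation $y \mapsto y^{(1/(p-1))}$ (which is only Hölder for $p > 2$), and track the dependence of the resulting constants on the parameters so as to pass to the limit from $a_n \downarrow 0$. Once this is in place, continuous dependence at $a = 0$ follows by combining the $a > 0$ continuity on each $[\epsilon, R]$ with the parameter-uniform contraction estimate on $[0, \epsilon]$, giving local uniform convergence of $w$ and $\dot w$ in $[0, \infty)$.
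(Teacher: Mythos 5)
Your proposal is correct and follows essentially the same route as the paper: for $a>0$ the paper likewise reduces to the Lipschitz Pr\"ufer system \eqref{eq_pf_neg}--\eqref{eq_pf_nege}, and for $a=0$ it simply defers to ``standard ODE techniques'' via the reference \cite[Section 3]{W}, whose method is precisely the integral-equation/fixed-point reformulation of \eqref{eq_1dm_negmu} near the singular endpoint that you spell out, including the correct handling of the signed power $y\mapsto y^{(1/(p-1))}$ at base points of size comparable to $s$. Your write-up is in fact more detailed than the paper's on the boundary case.
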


\section{Gradient Comparison}\label{s:GradientComparison}
With the definitions given in the previous sections, we are ready to state and prove the gradient comparison theorem for the eigenfunction $u$. Although more technically involved, because we are in a nonlinear setting and need the use of the linearized $p$-Laplace operator, the proof of this result is similar to the proof of \cite[Theorem 1]{kro}.
\begin{theorem}[\textsc{Gradient comparison Theorem}]\label{grad_est_neg}
 Let $M$ be a compact $n$-dimensional Riemannian manifold with Ricci curvature bounded from below by $(n-1)k<0$, and possibly with $C^2$ convex boundary. Let $u$ be a solution to
 \begin{gather}
  \Delta_p (u) = -\lambda u^{(p-1)}
 \end{gather}
rescaled in such a way that $-1=\min\{u\} <0<\max\{u\}\leq 1$. Let $w$ be a solution of the one dimensional initial value problem:
\begin{gather}
\begin{cases}
 \frac d {dt}\dot w ^{(p-1)} - T \dot w ^{(p-1)} +\lambda w^{(p-1)}=0\\
 w(a)=-1 \quad \dot w(a)=0
\end{cases}
\end{gather}
where $T$ satisfies \eqref{eq_T}. Consider an interval $[a,b]$ in which $\dot w \geq 0$. If $$[\min(u),\max(u)]\subset [-1,w(b)]\, ,$$ then:
\begin{gather*}
\abs{\nabla u (x)}\leq \dot w(w^{-1}(u(x)))
\end{gather*}
for all $x\in M$.
\end{theorem}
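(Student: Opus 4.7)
The plan is to adapt Kr\"oger's maximum-principle scheme to the nonlinear setting, using the linearized $p$-Laplacian $P_u^{II}$ in place of the Laplace--Beltrami operator. Let $\psi=w^{-1}:[-1,w(b)]\to[a,b]$ and set $G(s)\equiv\dot w(\psi(s))\ge 0$. The desired gradient bound is equivalent to the nonpositivity of the auxiliary function
\begin{gather*}
\Phi(x)\equiv \abs{\nabla u(x)}^p-G(u(x))^p\, ,
\end{gather*}
which I argue by contradiction: suppose $\Phi$ attains a positive maximum at some $x_0\in M$. Since $G\ge 0$, positivity of $\Phi(x_0)$ automatically forces $\nabla u(x_0)\neq 0$, so $u$ is at least $C^{2,\alpha}$ near $x_0$ by the regularity results recalled in Section~2, and $P_u^{II}$ is strictly elliptic there.

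\textbf{Boundary exclusion.} If $\partial M\neq\emptyset$ and $x_0\in\partial M$, the Neumann condition gives $\partial_{\hat n}G(u)^p=0$ immediately. A standard Reilly-type identity---obtained by differentiating $\ps{\nabla u}{\hat n}=0$ along $\partial M$ and using that $\nabla u$ is tangent---combined with convexity of $\partial M$ yields $\partial_{\hat n}\abs{\nabla u}^p=p\abs{\nabla u}^{p-2}H_u(\nabla u,\hat n)\le 0$. Hence $\partial_{\hat n}\Phi\le 0$ on $\partial M$, which by Hopf's lemma rules out $x_0$ on the boundary and forces the maximum into the interior.

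\textbf{Interior analysis.} At the interior maximum $x_0$ the first-order condition $\nabla\Phi(x_0)=0$, after pairing with $\nabla u$, reduces to the pointwise identity
\begin{gather*}
\abs{\nabla u}^{p-2}A_u=G^{p-1}(u)\,G'(u)\, ,
\end{gather*}
which expresses $A_u$ entirely in terms of $u(x_0)$. I then apply $P_u^{II}$ to $\Phi$. The first term $P_u^{II}(\abs{\nabla u}^p)$ is bounded below via the $p$-Bochner formula (Proposition~\ref{prop_pboch}) together with Corollary~\ref{cor_n} (used with $n'=n$) and the Ricci bound $\Ric\ge(n-1)k$. The second term is computed by the chain rule, yielding
\begin{gather*}
P_u^{II}(G(u)^p)=(p-1)(G^p)''(u)\abs{\nabla u}^p+(G^p)'(u)\Delta_p u\, .
\end{gather*}
Substituting $\Delta_p u=-\lambda u^{(p-1)}$, the first-order identity for $A_u$, and translating the ODE \eqref{eq_1dm_neg} into a relation for $G$, $G'$, $G''$ and $T$ evaluated at $\psi(u)$, I expect $P_u^{II}(\Phi)(x_0)$ to reduce to a strictly positive quantity whenever $\Phi(x_0)>0$. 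Since $P_u^{II}(\Phi)(x_0)\le 0$ at an interior maximum by ellipticity of $P_u^{II}$, this is the sought contradiction.

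\textbf{Main obstacles.} The hard part is the final algebraic reduction: one must verify that after combining the Bochner lower bound, the chain-rule expression, and the ODE identities, the curvature, eigenvalue, and first-order contributions align with the correct sign. The Riccati identity \eqref{eq_T} satisfied by the admissible $T_i$ is precisely the book-keeping device that matches the curvature--dimension inequality from Corollary~\ref{cor_n} with the Ricci lower bound $(n-1)k$, and this is why the specific shape of the family $T_i$ is chosen. A subsidiary technical issue is that when $1<p<2$ and $u(x_0)=0$ the function $\abs{\nabla u}^p$ is only $C^{1,\alpha}$ in a neighborhood of $x_0$; I would handle this either by perturbing the target to $G(u+\epsilon)^p$ and sending $\epsilon\to 0$, or by replacing $w$ with a slightly larger model $w^{p,\lambda+\epsilon}$ and invoking the continuous dependence statement in Proposition~\ref{prop_exun2} to conclude.
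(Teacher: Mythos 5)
Your overall architecture (maximum principle for $\abs{\nabla u}^p-(\dot w\circ w^{-1}(u))^p$ via $P^{II}_u$, the $p$-Bochner formula, Corollary \ref{cor_n} with $n'=n$, the first-order identity for $A_u$, and the ODE for $w$) is the right one, but the contradiction you hope for at the end does not materialize as stated, and this is the genuine gap. If you run the computation at a maximum point where $\Phi(x_0)>0$ \emph{strictly}, you cannot substitute $\abs{\nabla u}^p=G(u)^p$ into the Bochner lower bound, and the curvature, eigenvalue and first-order terms do not combine into a signed quantity; worse, even at a point where $\Phi=0$ exactly, the reduction using the ODE \eqref{eq_1dm_neg} and the Riccati identity \eqref{eq_T} yields precisely $0\le 0$ --- no contradiction --- because $w$ is the \emph{equality} case. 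The paper's proof installs two perturbations to fix this: it works with the two-parameter family $F_{K,c}=\abs{\nabla u}^p-(c\dot w)^p|_{(cw)^{-1}(u)}$, choosing $\bar c\ge 1$ so that $\max_M F_{k-\epsilon,\bar c}=0$ (since $\bar F_{k-\epsilon,c}\to-\infty$ as $c\to\infty$), which legitimizes the substitution $\abs{\nabla u}^p=(\bar c\dot w)^p$ at the critical point; and it builds the model from the perturbed curvature $K=k-\epsilon<k$, so that after all cancellations the inequality $P^{II}_u(F)\le 0$ collapses to $(n-1)\epsilon(\bar c\dot w)^{2p-2}\le 0$, a genuine contradiction. (A preliminary rescaling $u\mapsto\xi u$, $\xi<1$, keeps $u(M)$ inside $(-1,w(b))$ so that $\dot w$ is bounded below on the relevant compact subinterval and the division by $\dot w$ in the final step is harmless; your $\Phi$ as written degenerates at points where $u=\min u$ or $u=\max u=w(b)$.) Your proposal contains neither the normalization in $c$ nor the curvature perturbation, so the "final algebraic reduction" you defer is not merely hard book-keeping: without these devices it fails.

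A secondary misstep is the boundary treatment. Hopf's lemma cannot be used to "rule out" a boundary maximum here, because it presupposes that $\Phi$ is a subsolution of $P^{II}_u$ near $x_0$, which is exactly what is in dispute. What the convexity computation actually gives you is $0\le\ps{\nabla F}{\hat n}=-p\abs{\nabla u}^{p-2}II(\nabla u,\nabla u)\le 0$, hence $\nabla F(\bar x)=0$ even when $\bar x\in\partial M$; one then runs the same second-order interior argument at the boundary point rather than excluding it. This is how the paper proceeds, and it is the correct fix for your boundary step.
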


\begin{proof}
Suppose for the moment that $\partial M$ is empty; the modification needed for the general case will be discussed in Remark \ref{rem_bou}.

In order to avoid problems at the boundary of $[a,b]$, we assume that 
\begin{gather*}
[\min\{u\},\max\{u\}]\subset (-1,w(b))\, , 
\end{gather*}
so that we have to study our one dimensional model only on compact subintervals of $(a,b)$, where $\dot w$ is bounded below by a positive constant. Since $\min\{u\}=-1,\, \max\{u\}>0$, we can obtain this by multiplying $u$ by a positive constant $\xi<1$. If we let $\xi\to 1$, then the original statement is proved.

Using the notation introduced in Section \ref{sec_1d_neg}, we define the family of functions on $M$:
\begin{gather}
 F_{K,c}\equiv \abs{\nabla u}^p - (c\dot w)^p|_{(cw)^{-1} u(x)}\, ,
\end{gather}
for $c\geq 1$ \footnote{note that, since $\min\{u\}=-\xi\simeq -1$, $F$ is not well defined for all $c<1$} and $K<0$. Since $w_{K,n,i,a}$ depends continuously in the $C^1$ sense on $K$, these functions are well-defined and continuous on $M$ if $K$ is sufficiently close to $k$.

In the following, we consider $i$, $a$, $\lambda$ and $n$ to be fixed parameters, while we will need to let $K$ vary in a neighborhood of $k$.

Using a contradiction argument, we prove that for every $\epsilon>0$ sufficiently small, $F_{k-\epsilon,1}\leq 0$ on all of $M$.

Define $\bar F_{k-\epsilon,c}=\max\{F_{k-\epsilon,c}(x), \ x\in M\}$, and suppose by contradiction that $\bar F_{k-\epsilon,1}>0$. Since
\begin{gather}
 \lim_{c\to \infty} \bar F_{k-\epsilon,c} =-\infty\, ,
\end{gather}
there exists a $\bar c \geq 1$ such that $\bar F_{k-\epsilon,\bar c}=F_{k-\epsilon, \bar c}(\bar x)=0$. It is clear that, at $\bar x$, $\abs{\nabla u }> 0$.

Hereafter, we will assume that $u$ is a $C^3$ function in a neighborhood of $\bar x$, so that $F$ will be a $C^2$ function in a neighborhood of this point. This is certainly the case if $u(\bar x)\neq 0$, or if $p\geq 2$. If $1<p<2$ and $u(\bar x)=0$, then $u$ has only $C^{2,\alpha}$ regularity around $\bar x$. However, this regularity issue is easily solved, as we will see in Remak \ref{rem_reg}.

Since we are assuming $\partial M=\emptyset$, $\bar x$ is internal maximum point, and thus
\begin{gather}\label{eq_max1}
 \nabla F_{ k-\epsilon, \bar c}(\bar x)=0 \\
 P^{II}_u (F_{ k-\epsilon, \bar c})(\bar x)\leq 0\label{eq_max2}
\end{gather}
Simple algebraic manipulations on equation \eqref{eq_max1} yield to the following relations valid at $\bar x$:
\begin{align*}
 p\abs{\nabla u}^{p-2} H_u\nabla u & = \frac p {p-1} \Delta_p (\bar cw) \nabla u\, ,\\
\abs{\nabla u}^{p-2} A_u = \abs{\nabla u}^{p-2} \frac{H_u\ton{\nabla u,\nabla u}}{\abs{\nabla u}^2}&= \frac{1}{p-1}\Delta_p (\bar cw)\, .
\end{align*}
Using Proposition \ref{prop_pboch} and Corollary \ref{cor_n} to estimate the left hand side of inequality \eqref{eq_max2}, we get that, at $\bar x$:
\begin{gather}
0\geq\frac 1 p P^{II}_u(F_{ k-\epsilon, \bar c})\geq -\lambda(p-1) \abs u ^{p-2} \abs{\nabla u}^p + (p-2)\lambda u^{p-1} \abs{\nabla u}^{p-2}A_u+\notag \\
+\frac{\lambda^2 \abs u^{2p-2}}{n} + \frac n {n-1}\ton{\frac{\lambda u^{p-1}}{n} - (p-1)\abs{\nabla u}^{p-2} A_u}^2 + (n-1)k\abs{\nabla u}^{2p-2}+\notag \\
+\frac{\lambda u^{p-1}}{p-1} \Delta_p (\bar c w)|_{(\bar cw)^{-1} (u)} -\abs{\nabla u}^p \frac{1}{\bar c\dot w} \left.\frac {d(\Delta_p(\bar c w))} {dt}\right\vert_{(\bar cw)^{-1} (u)} \label{eq_last}\, .
\end{gather}
At $\bar x$, $\abs{\nabla u }^p = (\bar c\dot w)^p |_{(\bar cw)^{-1} (u)}$, thus we obtain that, at $\bar t=(\bar c w )^{-1} (u(\bar x))$:
\begin{gather*}
0\geq-\lambda(p-1) \abs{\bar c w}^{p-2} (\bar c\dot w )^p+ \frac{p-2}{p-1}\lambda (\bar c w)^{p-1}\Delta_p(\bar c w)+\\
+\frac{\lambda^2 \abs{\bar c w}^{2p-2}}{n} + \frac n {n-1}\ton{\frac{\lambda (\bar c w)^{p-1}}{n} - \Delta_p (\bar c w)}^2 + (n-1)k(\bar c \dot w)^{2p-2}+\\
+\frac{\lambda (\bar c w)^{p-1}}{p-1} \Delta_p (\bar cw)- (\bar c \dot w )^p\frac{1}{\bar c\dot w} \frac {d(\Delta_p (\bar cw))} {dt}\, .
\end{gather*}
By direct calculation, using the ODE \eqref{eq_1dm_neg} satisfied by $\bar cw$, this inequality is equivalent to:
\begin{gather*}
(n-1)(k-K)(\bar c \dot w)^{2p-2}|_{\bar t}=(n-1)\epsilon (\bar c \dot w)^{2p-2}|_{\bar t}\leq 0\, ,
\end{gather*}
which is a contradiction. 
\end{proof}

\begin{remark}\label{rem_bou}
\rm{Analyzing the case with boundary, the only difference in the proof of the gradient comparison is that the point $\bar x$ may lie in the boundary of $M$, and so it is not immediate to obtain equation \eqref{eq_max1}. However, once this equation is proved, it is evident that $P^{II}_u F|_{\bar x}\leq 0$ and the rest of proof proceeds as before. In order to prove that $\bar x$ is actually a stationary point for $F$, the (nonstrict) convexity of the boundary is crucial. Using a technique similar to the proof of \cite[Theorem 8]{new}, we prove the following Lemma.}
\end{remark}
\begin{lemma}
If $\partial M $ is non empty, even if $\bar x \in \partial M$ the equation
\begin{gather}
 \nabla F_{k-\epsilon, \bar c}|_{\bar x}=0 
\end{gather}
remains valid.
\end{lemma}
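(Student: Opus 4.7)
The plan is to show separately that the tangential and normal components of $\nabla F(\bar x)$ vanish at the boundary maximum point, where I write $F=F_{k-\epsilon,\bar c}$ for brevity. The tangential component is immediate: since $F$ attains its global maximum on $M$ at $\bar x\in\partial M$, in particular $F|_{\partial M}$ is maximized there, so all derivatives tangent to $\partial M$ vanish at $\bar x$. Moreover, since $\bar x$ is also a maximum of $F$ on all of $M$, the outward normal derivative obeys $\ps{\nabla F}{\hat n}(\bar x)\geq 0$, where $\hat n$ denotes the outward unit normal to $\partial M$. The entire task is therefore to establish the reverse inequality $\ps{\nabla F}{\hat n}(\bar x)\leq 0$.

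Next I would compute $\nabla F$ explicitly. Setting $G(s)\equiv (\bar c\dot w)^p$ evaluated at $t=(\bar c w)^{-1}(s)$, we have $F(x)=\abs{\nabla u}^p(x)-G(u(x))$, and since $\abs{\nabla u}(\bar x)>0$ (as already noted in the proof of Theorem \ref{grad_est_neg}) $u$ is smooth near $\bar x$ and
\begin{gather*}
\nabla F = p\abs{\nabla u}^{p-2}H_u(\nabla u,\cdot)-G'(u)\nabla u\, .
\end{gather*}
Pairing with $\hat n$ and invoking the Neumann condition $\ps{\nabla u}{\hat n}=0$ on $\partial M$, the second term drops out and the problem reduces to showing
\begin{gather*}
H_u(\nabla u,\hat n)(\bar x)\leq 0\, .
\end{gather*}

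This is the single place where the convexity hypothesis enters, and it is what I expect to be the crux of the argument. Since Neumann forces $\nabla u$ to be tangent to $\partial M$ along the boundary, I would differentiate the identity $\ps{\nabla u}{\hat n}\equiv 0$ on $\partial M$ in the tangent direction $\nabla u$ and use metric compatibility of $\nabla$ to obtain
\begin{gather*}
H_u(\nabla u,\hat n)=-\ps{\nabla u}{\nabla_{\nabla u}\hat n}=-II(\nabla u,\nabla u)\, ,
\end{gather*}
where $II(X,Y)\equiv \ps{\nabla_X\hat n}{Y}$ is the second fundamental form of $\partial M$. Nonstrict convexity of $\partial M$ means $II\geq 0$ on tangent vectors, so the right-hand side is nonpositive. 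Combined with the maximum principle inequality $\ps{\nabla F}{\hat n}(\bar x)\geq 0$, this forces equality, which together with the vanishing tangential part yields $\nabla F(\bar x)=0$. The main subtlety is bookkeeping of sign conventions for $II$ and the outward normal; no serious analytic obstacle arises, since the smoothness of $u$ needed for the Hessian identity is already available from $\abs{\nabla u}(\bar x)>0$.
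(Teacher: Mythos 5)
Your proposal is correct and follows essentially the same argument as the paper: tangential derivatives vanish at the boundary maximum, the normal derivative is nonnegative there, the Neumann condition kills the $G'(u)\ps{\nabla u}{\hat n}$ term, and differentiating $\ps{\nabla u}{\hat n}=0$ along $\nabla u$ gives $H_u(\nabla u,\hat n)=-II(\nabla u,\nabla u)\leq 0$ by convexity, forcing equality. The only difference is that you spell out the derivation of the second-fundamental-form identity, which the paper states without computation.
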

\begin{proof}
Let $\hat n$ be the outward normal derivative of $\partial M$.

Since $\bar x$ is a point of maximum for $F_{k-\epsilon, \bar c}$, we know that all the derivatives of $F$ along the boundary vanish, and that the normal derivative of $F$ is nonnegative
\begin{gather*}
 \ps{\nabla F}{\hat n}\geq 0 \ .
\end{gather*}
Neumann boundary conditions on $\Delta_p$ ensure that $\ps{\nabla u}{\hat n}=0$. Define for simplicity $\psi(x)=(\bar c \dot w )^p|_{(\bar c w)^{-1}(x)}$. By direct calculation we have
\begin{gather*}
 \ps{\nabla F}{\hat n} = -\dot \psi|_{u(\bar x)} \ps{\nabla u}{\hat n} + p\abs{\nabla u}^{p-2} H_u(\nabla u,\hat n)=  p\abs{\nabla u}^{p-2} H_u(\nabla u,\hat n) \ .
\end{gather*}
Using the definition of second fundamental form $II(\cdot,\cdot)$ and the convexity of $\partial M$, we can conclude that
\begin{gather*}
 0\leq \ps{\nabla F}{\hat n} = p\abs{\nabla u}^{p-2} H_u(\nabla u,\hat n) = - p\abs{\nabla u}^{p-2} II(\nabla u,\nabla u)\leq 0 \, .
\end{gather*}
\end{proof}

\begin{remark}\label{rem_m_neg}
\rm Note that Corollary \ref{cor_n} is valid for all real $n'\geq n$, and so also the gradient comparison remains valid if we use model equations with ``dimension'' $n'$, i.e., if we assume $\dot T = \frac{T^2}{n'-1} +(n'-1)k$.
\end{remark}

\begin{remark}\label{rem_reg}
\rm As mentioned before, in case $1<p<2$ and $u(x)=0$, we have a regularity issue to address in the proof of the gradient comparison theorem. Indeed, in this case $F$ is only a $C^{1,\alpha}$ function and Equation \eqref{eq_max2} is not well-defined since there are two diverging terms in this equation. As it can be seen from \eqref{eq_last}, these terms are
\begin{gather*}
-\lambda(p-1) \abs u ^{p-2} \abs{\nabla u}^p\quad \text{ and } \quad -\abs{\nabla u}^p \frac{1}{\bar c\dot w} \left.\frac {d(-\lambda (\bar c w)^{p-1})} {dt}\right\vert_{(\bar cw)^{-1} (u)} \, .
\end{gather*}
 
 However, since $\nabla u(\bar x)\neq 0$, there exists an open neighborhood $U$ of $\bar x$ such that $U\setminus\{u=0\}$ is open and dense in $U$. On this set, it is easy to see that these two terms exactly cancel each other, and all the other terms in $P^{II}_u (F)$ are well-defined and continuous on $U$. Thus Equation \eqref{eq_max2} is valid even in this low-regularity context.
\end{remark}

It is not difficult to adapt the proof of the previous Theorem in order to compare different functions $w_{k,n,i,a}$. In particular, we can state the following:
\begin{theorem}\label{grad_est_negw}
For $j=1,2$ let $w_j=w_{k,n,i_j,a_j}$ be solutions to the one dimensional IVP \eqref{eq_1dm_neg} and let $b_j<\infty$ be the first point $b_j>a_j$ such that $\dot w_j(b_j)=0$. If
\begin{gather}
 w_1[a_1,b_1]\subset w_2[a_2,b_2]\, ,
\end{gather}
then we have the following comparison for the derivatives:
\begin{gather}
 \abs{\dot w_1}|_t \leq \dot w_2|_{w_2^{-1}(w_1(t))}\, ,
\end{gather}
or equivalently:
\begin{gather}
 \abs{\dot w_1}|_{w_1^{-1}(s)}\leq \abs{\dot w_2}|_{w_2^{-1}(s)}\, .
\end{gather}
\end{theorem}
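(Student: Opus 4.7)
The plan is to reduce the statement to the already-established gradient comparison Theorem \ref{grad_est_neg} by realizing $w_1$ as a radial eigenfunction on a suitable warped product. I would set $M := [a_1, b_1] \times_{\tau_{i_1}} S^{n-1}$ with metric $dt^2 + \tau_{i_1}^2 g_{S^{n-1}}$; by the warped-product formulas recalled in Section \ref{sec_1d_neg} this is a smooth $n$-dimensional Riemannian manifold with $\Ric_M \geq (n-1) k$, and by Remark \ref{rem_w} the radial function $u(t, x) := w_1(t)$ satisfies $\Delta_p u = -\lambda u^{(p-1)}$ on $M$, with the Neumann condition $\langle \nabla u, \hat n \rangle = 0$ holding on $\partial M$ because $\dot w_1(a_1) = \dot w_1(b_1) = 0$. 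A short \pf argument as in Section \ref{sec_1d_neg} shows that $\phi_1$ traverses $[-\pi_p/2, \pi_p/2]$ on $[a_1, b_1]$, so $w_1$ is strictly increasing from $-1$ to a positive value $w_1(b_1)$.

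With this set-up, the assumption $w_1[a_1, b_1] \subset w_2[a_2, b_2]$ becomes $[\min u, \max u] = [-1, w_1(b_1)] \subset [-1, w_2(b_2)]$, which is precisely the range hypothesis of Theorem \ref{grad_est_neg}. Applying that theorem with $w_2$ as the comparison model produces
\[
|\nabla u|(x) \leq \dot w_2\bigl(w_2^{-1}(u(x))\bigr) \qquad \text{for all } x \in M,
\]
and since $u$ is radial we have $|\nabla u|(t, x) = |\dot w_1(t)|$ and $u(t, x) = w_1(t)$, so this inequality reads exactly as $|\dot w_1|_t \leq \dot w_2|_{w_2^{-1}(w_1(t))}$. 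The second, equivalent form then follows from the change of variable $s = w_1(t)$.

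The main obstacle I anticipate is that the boundary $\partial M = \{a_1, b_1\} \times S^{n-1}$ is generally not convex: convexity at $\{a_1\} \times S^{n-1}$ would require $\dot\tau_{i_1}(a_1) \leq 0$, which typically fails for $i_1 = 1, 2$. However, inspecting the proof of Theorem \ref{grad_est_neg} one sees that convexity of $\partial M$ enters only through the boundary lemma after Remark \ref{rem_bou}, whose sole purpose is to force $\nabla F_{K, \bar c}(\bar x) = 0$ at a boundary maximum $\bar x$; the only delicate term in that conclusion is $|\nabla u|^{p-2} II(\nabla u, \nabla u)$. In our radial setting $|\nabla u|$ vanishes identically on $\partial M$, so this term is automatically zero irrespective of the sign of the second fundamental form, and the boundary step of the proof goes through unchanged. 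As an alternative, one could run the entire argument intrinsically on the weighted interval $([a_1, b_1], \mu_{i_1} dt)$, applying a one-dimensional maximum principle to the family $F_c(t) := \dot w_1(t)^p - (c \dot w_2)^p|_{(c w_2)^{-1}(w_1(t))}$, $c \geq 1$, and using the Riccati identity $\dot T = T^2/(n-1) + (n-1) k$ in place of the Ricci lower bound to reproduce the Bochner-type estimate.
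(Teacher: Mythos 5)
Your proposal is correct and is essentially the paper's own proof: the paper likewise realizes $w_1$ as the radial function $u(t,x)=w_1(t)$ on the warped product $[a_1,b_1]\times_{\tau_{i_1}}S^{n-1}$ and invokes Theorem \ref{grad_est_neg}, noting only that the possibly nonconvex boundary requires a substitute for Remark \ref{rem_bou}. Your observation that $\nabla u$ vanishes identically on $\partial M$ (so the second fundamental form term is irrelevant, and indeed the maximum point of $F$ cannot lie on the boundary) is precisely the replacement the paper has in mind.
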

\begin{proof}
 This Theorem can be proved directly using a method similar to the one described in the proof of Theorem \ref{grad_est_neg}. Another method is to define on $M=[a_1,b_1]\times_{\tau_i} S^{n-1}$ the function $u(t,x)=w_1(t)$, and use directly Theorem \ref{grad_est_neg} to get the conclusion. Note that $M$ might have nonconvex boundary in this case, but since $u(t,x)$ depends only on $t$, it is easy to find a replacement for Remark \ref{rem_bou}.
\end{proof}

\section{Fine properties of the one dimensional model}\label{sec_1drev}
In this section we study some fine properties of our one dimensional model. In particular, we study the oscillatory behaviour of the functions $w$ depending on $\lambda, \, i$ and $a$.  Throughout this section, $n$ and $k$ are fixed, and as usual we set $\alpha= \ton{\frac \lambda {p-1}}^{\frac 1 p}$.

To begin with, it is easy to see that in the model $i=3$ there always exists an odd solution $w_{3,-\bar a}$ which has maximum and minimum equal to $1$ in absolute value.
\begin{proposition}\label{prop_barasym}
 Fix $\alpha>0$, $n\geq 1$ and $k<0$. Then there always exists a unique $\bar a>0$ such that the solution $w_{3,-\bar a}=w^{p,\lambda}_{k,n,3,-\bar a}$ to the IVP \eqref{eq_1dm_neg} (with $T=T_3$) is odd. In particular, $w_{3,-\bar a}$ restricted to $[-\bar a,\bar a]$ has nonnegative derivative and has maximum equal to $1$.
\end{proposition}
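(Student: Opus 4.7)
The plan is to construct the odd solution by prescribing its data at $t=0$ and then identify it with $w_{3,-\bar a}$ for a uniquely determined $\bar a$. The key observation is that $T_3$ is odd in $t$, which makes the ODE \eqref{eq_1dm_neg} invariant under the reflection $(t,w)\mapsto (-t,-w)$: if $w$ is a solution, so is $\tilde w(t):=-w(-t)$. In particular, any solution with $w(0)=0$ agrees with $\tilde w$ at $t=0$ in both value and first derivative, so by uniqueness of the Cauchy problem $w\equiv \tilde w$, i.e., $w$ is odd on its whole domain.

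I therefore fix $w(0)=0$ and $\dot w(0)=c>0$ and pass to the \pf variables of Definition \ref{deph_pf}, obtaining $\phi(0)=0$ and $e(0)=c$. The central step is to show that $\phi$ increases strictly from $0$ to $\pi_p/2$ on a finite interval $[0,\bar a]$ and that $\bar a$ does not depend on $c$. Indeed, on the range $\phi\in[0,\pi_p/2]$ the product $\cosp^{p-1}(\phi)\sinp(\phi)$ is nonnegative (and vanishes only at the endpoints), while $T_3(t)\le 0$ for $t\ge 0$; hence \eqref{eq_pf_neg} gives $\dot\phi\ge \alpha>0$ throughout, with $\dot\phi=\alpha$ exactly at $\phi=0$ and $\phi=\pi_p/2$. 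Since \eqref{eq_pf_neg} does not involve $e$, the first-passage time $\bar a$ is the same for every initial $c$.

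On $[0,\bar a]$ we have $\dot w=e\cosp(\phi)\ge 0$, so $w$ is nondecreasing and $w(\bar a)>0$. Using the $(p{-}1)$-homogeneity of the ODE in $w$, I pick the unique $c>0$ for which $w(\bar a)=1$; Proposition \ref{prop_exun1} provides global existence and continuous dependence. The odd extension of this $w$ satisfies $w(-\bar a)=-1$ and $\dot w(-\bar a)=0$, so by uniqueness of \eqref{eq_1dm_neg} at $t=-\bar a$ we identify $w\equiv w_{3,-\bar a}$. The nonnegativity of $\dot w$ on $[-\bar a,\bar a]$ and $\max w=1$ follow at once from $\cosp(\phi)\ge 0$ on $[-\pi_p/2,\pi_p/2]$ together with oddness.

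Uniqueness of $\bar a$ is obtained by reversing the construction: any admissible $\bar a'$ forces $w_{3,-\bar a'}(0)=0$ via the symmetry argument and, after the rescaling above, reduces to the same universal odd profile with $\phi(0)=0$; together with the nonnegativity of $\dot w$ on $[-\bar a',\bar a']$ implicit in the ``in particular'' assertion, $\bar a'$ must be the first-passage time of $\phi$ through $\pi_p/2$, i.e., $\bar a'=\bar a$. The main technical subtlety I anticipate is the sharpness of the bound $\dot\phi\ge \alpha$: it is tight at both endpoints of $[0,\pi_p/2]$ and relies crucially on $T_3(t)$ having the sign of $-t$, a feature specific to the $i=3$ model that is not shared by $T_1$ or $T_2$.
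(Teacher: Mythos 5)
Your proposal is correct and follows essentially the same route as the paper: both pass to the \pf variables, use the oddness of $T_3$ to get oddness of the solution through $\phi(0)=0$, and use the key sign observation that $\dot\phi\ge\alpha$ while $\phi\in[-\pi_p/2,\pi_p/2]$ to obtain a finite first-passage time $\bar a\le \pi_p/(2\alpha)$. Your write-up is somewhat more explicit than the paper's about the normalization via homogeneity (the paper phrases this as the evenness of $e$ independently of $e(0)$) and about the uniqueness of $\bar a$, which the paper leaves implicit.
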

\begin{proof}
 We use the \pf transformation to prove this theorem. For the sake of simplicity, here we write $\phi$ for $\phi_{i,a}$. Consider the IVP:
 \begin{gather}
  \begin{cases}
     \dot \phi = \alpha - \frac{T_3}{p-1}\cosp^{p-1}(\phi)\sinp(\phi)\\
     \phi(0)=0
  \end{cases}
 \end{gather}
Recall that $T_3(0)=0$, $T_3$ is odd and it is negative on $(0,\infty)$. By uniqueness of the solution, also $\phi$ is an odd function. Moreover, it is easily seen that as long as $\phi\in [-\pi_p/2,\pi_p/2]$, $\dot \phi \geq \alpha$.

This implies that there exists a $-\bar a \in [-\pi_p/(2\alpha),0]$ such that $\phi(-\bar a)=-\pi_p/2$. It is also easy to see that the corresponding solution $e(t)$ to equation \eqref{eq_pf_nege} is even, regardless of the value of $e(0)$.  Thus we have proved all the properties we were seeking for $w_{3,-\bar a}$.
\end{proof}

This proposition proves that we can always use the gradient comparison Theorem with $w_{3,-\bar a}$ as a model function. However, as we will see in the following section, to get a sharp estimate on the eigenvalue we will need a model function $w$ such that $\min\{w\}=\min\{u\}=-1$ \textit{and} $\max\{w\}=\max\{u\}={u^\star}$.

In order to prove that such a model function always exists, we need to study more properties of the one dimensional model. We begin with some definitions.
\begin{definition}
 Given the model function $w_{i,a}$, we define $b(i,a)$ to be the first value $b>a$ such that $\dot w_{i,a}(b)=0$, and set $b(a)=\infty$ if such a value doesn't exist. Equivalently, $b(i,a)$ is the first value $b>a$ such that $\phi_{i,a}(b)=\frac {\pi_p}{2}$. 
 
Define also the diameter of the model function as 
\begin{gather}
\delta(i,a)=b(i,a)-a 
\end{gather}
and the maximum of the model function
\begin{gather}
 m(i,a)= w_{i,a}(b(i,a))= \alpha^{-1} e_{i,a} (b(i,a))\, .
\end{gather}
\end{definition}
\begin{remark}\rm
 It is evident that, when $b(i,a)<\infty$, the range of $w$ on $[a,b]$ is $[-1,m]$. More precisely:
\begin{gather}
 w_{i,a}[a,b(i,a)]= [-1,m(i,a)]
\end{gather}
If $b(i,a)=\infty$, then $w_{i,a}[a,b(i,a))=[-1,m(i,a))$. In this case, we will see that $m(i,a)=0$.\\
An immediate consequence of Proposition \ref{prop_barasym} is that there always exists some $\bar a>0$ such that $b(3,-\bar a)<\infty$.
\end{remark}

In the following, we study the function $\delta(i,a)$, in particular its limit as $a$ goes to infinity. As we will see, the finiteness of this limit is related to the oscillatory behaviour of the differential equation. We will find a limiting value $\bar \alpha=\bar \alpha(k,n)$ such that 
for $\alpha>\bar \alpha$, $\delta(i,a)$ is finite for all $i,a$; while for $\alpha<\bar \alpha$, $\delta(i,a)=\infty$ for $i=1,2$ and $\lim_{a\to \infty} \delta(i,a)=\infty$. We begin by studying the translation invariant model $T=T_2$.

\begin{proposition}
 Consider the model $T_2$, then there exists $\bar \alpha(k,n)$ for which when $\alpha>\bar \alpha$ the solution to:
\begin{equation}
 \begin{cases}
  \dot \phi = \alpha + \frac{(n-1)\sqrt{-k}}{p-1} \cosp^{(p-1)}(\phi)\sinp(\phi)\\
  \phi(0)=-\frac{\pi_p}{2}
 \end{cases}
\end{equation}
has
\begin{gather}
 \lim_{t\to \infty} \phi(t)=\infty\, ,
\end{gather}
and in particular $\delta(2,0)<\infty$. While for $\alpha\leq \bar \alpha$:
\begin{gather}
-\frac{\pi_p} 2 < \lim_{t\to \infty} \phi(t)<0\, ,
\end{gather}
and $\delta(2,0)=\infty$.
\end{proposition}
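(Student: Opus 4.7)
The plan is to exploit the fact that for $T=T_2=-(n-1)\sqrt{-k}$ the Prüfer equation \eqref{eq_pf_neg} is \emph{autonomous}, namely
\begin{gather*}
\dot\phi = \alpha + f(\phi), \qquad f(\phi)\equiv \frac{(n-1)\sqrt{-k}}{p-1}\cosp^{(p-1)}(\phi)\sinp(\phi),
\end{gather*}
so that the entire asymptotic behaviour of $\phi(t)$ is governed by the sign of $\alpha+f(\phi)$. A direct inspection gives $f(-\pi_p/2)=f(0)=0$ (the first zero of $\cosp^{p-1}$, the second of $\sinp$) and $f\le 0$ on $[-\pi_p/2,0]$, with $f\ge 0$ on $[0,\pi_p/2]$ by continuity and by the sign of $\sinp,\cosp$. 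I would then set
\begin{gather*}
\bar\alpha(n,k)\equiv \max_{\phi\in[-\pi_p/2,0]}\bigl(-f(\phi)\bigr)>0,
\end{gather*}
which is attained at some interior point $\phi^\star\in(-\pi_p/2,0)$ since $f$ vanishes at both endpoints.

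For $\alpha>\bar\alpha$, the function $g(\phi)\equiv \alpha+f(\phi)$ is strictly positive on $[-\pi_p/2,0]$, so $\phi$ is strictly increasing with $\dot\phi$ bounded below by a positive constant on that interval. Thus $\phi$ reaches $0$ in finite time. On $[0,\pi_p/2]$ we have $f\ge 0$, hence $\dot\phi\ge \alpha>0$, so $\phi$ reaches $\pi_p/2$ in finite time; this gives $\delta(2,0)<\infty$. Since the ODE is invariant under $\phi\mapsto\phi+\pi_p$ (both $\sinp$ and $\cosp^{(p-1)}$ have period $\pi_p$ up to simultaneous sign flips that cancel in the product), the same argument applied repeatedly yields $\phi(t)\to +\infty$.

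For $\alpha\le\bar\alpha$, the key observation is that $g(-\pi_p/2)=\alpha>0$ while $\min_{[-\pi_p/2,0]} g = \alpha-\bar\alpha\le 0$, so $g$ has a zero inside $[-\pi_p/2,0)$; let $\phi^\star(\alpha)$ be the smallest such zero. By the choice of the smallest zero and continuity, $g>0$ on $[-\pi_p/2,\phi^\star(\alpha))$. Uniqueness of solutions to the autonomous ODE prevents $\phi(t)$ from crossing the equilibrium $\phi^\star(\alpha)$, and monotone convergence forces $\phi(t)\nearrow\phi^\star(\alpha)$ as $t\to\infty$. Because $g(0)=\alpha>0$, we have $\phi^\star(\alpha)<0$, and clearly $\phi^\star(\alpha)>-\pi_p/2$ since $g(-\pi_p/2)>0$. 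Therefore $-\pi_p/2<\lim_{t\to\infty}\phi(t)<0$ and $\phi$ never reaches $\pi_p/2$, which means $\delta(2,0)=\infty$.

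The only mildly subtle point is the boundary case $\alpha=\bar\alpha$, where the equilibrium $\phi^\star$ is degenerate ($g(\phi^\star)=g'(\phi^\star)=0$); but this only affects the rate of convergence, not the qualitative conclusion, since $\phi(t)$ is still monotone and bounded above by $\phi^\star\in(-\pi_p/2,0)$. The rest of the argument is a straightforward phase-line analysis and requires no further input beyond the periodicity of the Prüfer nonlinearity and the sign information collected above.
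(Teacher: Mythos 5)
Your proof is correct and follows essentially the same route as the paper, which for this proposition only sketches the argument ("damped $p$-harmonic oscillator") and defers the details to the analysis of the $T_3$ model, where $\bar\alpha$ is identified exactly as you define it (via the minimum of $\cosp^{(p-1)}(\psi)\sinp(\psi)$ on $(-\pi_p/2,0)$) and the same phase-line/equilibrium argument is used. Your write-up simply carries out that deferred phase-line analysis directly for the autonomous $T_2$ equation, including the correct use of the $\pi_p$-periodicity of $\cosp^{(p-1)}(\phi)\sinp(\phi)$ to get $\phi(t)\to\infty$ when $\alpha>\bar\alpha$.
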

\begin{proof}
 This problem is a sort of damped p-harmonic oscillator. The value $\bar \alpha$ is the critical value for the damping effect. The proof can be carried out in detail following the techniques used in the next Proposition, where $\bar\alpha$ is found explicitly in (\ref{eq_baralpha}).
\end{proof}

According to whether $\alpha>\bar \alpha$ or not, the behaviour of the solutions to the models $T_1$ and $T_3$ change in a similar fashion. We first describe what happens to the symmetric model, i.e., the model $T_3$.
\begin{proposition}\label{prop_alpha}
There exists a limiting value $\bar \alpha>0$ such that for $\alpha>\bar \alpha$ the solution $w_{3,a}$ has an oscillatory behaviour and $\delta(3,a)<\infty$ for every $a\in\R$.

For $\alpha<\bar \alpha$, instead, we have:
\begin{gather}
 \lim_{t\to \infty} \phi_{3,a}(t)<\infty
\end{gather}
for every $a\in \R$. Equivalently, for $a$ sufficiently large
\begin{gather}
 -\frac{\pi_p} 2 <\lim_{t\to \infty} \phi_{3,a}(t)<0
\end{gather}
and $\delta(3,a)=\infty$. For $\alpha = \bar \alpha$, we have:
\begin{gather}
 \lim_{a\to \infty} \delta(3,a) = \infty\, .
\end{gather}

\end{proposition}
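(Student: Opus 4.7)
The plan is to reduce matters to the autonomous limit of the Pr\"ufer equation
\[
 \dot\phi \,=\, \alpha - \frac{T_3(t)}{p-1}\cosp^{(p-1)}(\phi)\sinp(\phi), \qquad \phi(a)=-\pi_p/2,
\]
obtained by replacing $T_3(t)$ by its asymptotic value $-(n-1)\sqrt{-k}$ as $t\to+\infty$. Writing $C := (n-1)\sqrt{-k}/(p-1)$ and $h(\phi):=\cosp^{(p-1)}(\phi)\sinp(\phi)$, the limit ODE reads $\dot\phi_\infty = \alpha + Ch(\phi_\infty)$. The function $h$ is odd, $\pi_p$-periodic, vanishes at $0$ and $\pm\pi_p/2$, is strictly negative on $(-\pi_p/2,0)$, and attains there a minimum $-M<0$ at some interior $\phi_\star$ (which can be computed explicitly using $\cosp^2\phi_\star = (p-1)\sinp^2\phi_\star$). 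I would then define
\[
 \bar\alpha \,:=\, CM \,=\, \frac{(n-1)\sqrt{-k}}{p-1}\max_{\phi\in[-\pi_p/2,0]}\ton{-\cosp^{(p-1)}(\phi)\sinp(\phi)}.
\]

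\textbf{Supercritical case.} When $\alpha>\bar\alpha$, the uniform bounds $|T_3(t)|\le(n-1)\sqrt{-k}$ and $|h|\le M$ yield $\dot\phi\ge\alpha-\bar\alpha>0$ everywhere on $\R\times\R$. Hence $\phi_{3,a}$ is strictly increasing with speed at least $\alpha-\bar\alpha$, reaches $\pi_p/2$ in time at most $\pi_p/(\alpha-\bar\alpha)$, and $\delta(3,a)<\infty$ for every $a\in\R$.

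\textbf{Subcritical case.} When $\alpha<\bar\alpha$, the equation $\alpha+Ch(\phi)=0$ has precisely two solutions $\phi_-<\phi_+$ in $(-\pi_p/2,0)$, with $\phi_-$ a hyperbolic attractor and $\phi_+$ a hyperbolic repeller of the limit ODE, and basin of $\phi_-$ equal to $(\phi_+-\pi_p,\phi_+)$. Since $T_3(t)+(n-1)\sqrt{-k}=O(e^{-2\sqrt{-k}t})$, for $a$ large the full equation on $[a,\infty)$ is a small exponentially decaying perturbation of the autonomous limit, and standard continuous dependence combined with hyperbolic stability shows that $\phi_{3,a}(t)$ remains close to the autonomous orbit issuing from $-\pi_p/2$ (which sits inside the basin of $\phi_-$); hence $\phi_{3,a}(t)$ converges to a limit in $(-\pi_p/2,0)$ and $\delta(3,a)=\infty$. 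For arbitrary $a\in\R$, fix $a_0$ large enough that the above trapping applies on $[a_0,\infty)$: standard ODE bounds control $\phi_{3,a}$ on the compact interval $[a,a_0]$, and the autonomous-limit analysis then confines $\phi_{3,a}(t)$ to some translate of the basin for $t\ge a_0$, giving $\lim_{t\to\infty}\phi_{3,a}(t)<\infty$.

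\textbf{Critical case and main obstacle.} At $\alpha=\bar\alpha$ the two roots merge at $\phi_\star$ with Taylor expansion $\alpha+Ch(\phi)=\tfrac12 Ch''(\phi_\star)(\phi-\phi_\star)^2+O\ton{(\phi-\phi_\star)^3}$ and $h''(\phi_\star)>0$. Consequently $\int d\phi/(\alpha+Ch)$ diverges at $\phi_\star$, and the autonomous limit starting at $-\pi_p/2$ approaches $\phi_\star$ only as $t\to\infty$. For the nonautonomous system, set $\epsilon(a):=\sup_{t\ge a}|T_3(t)+(n-1)\sqrt{-k}|$, which decays exponentially as $a\to\infty$. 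The core estimate is that the crossing time through any $\sqrt{\epsilon(a)}$-neighborhood of $\phi_\star$ is at least of order $1/\sqrt{\epsilon(a)}$, which diverges; this is proved by constructing sub- and supersolution barriers from the quadratic expansion above. The principal technical difficulty lies precisely here: one must carefully balance the quadratic degeneracy of the limit vector field at $\phi_\star$ against the exponential smallness of the perturbation $T_3(t)+(n-1)\sqrt{-k}$, so that the diverging autonomous passage time survives under the nonautonomous perturbation, thereby forcing $\delta(3,a)\to\infty$.
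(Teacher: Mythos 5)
Your trichotomy is set up exactly as in the paper: the same Pr\"ufer reduction, the same critical value $\bar\alpha=\frac{(n-1)\sqrt{-k}}{p-1}\max\ton{-\cosp^{(p-1)}\sinp}$ on $(-\pi_p/2,0)$, the same uniform bound $\dot\phi\geq\alpha-\bar\alpha$ in the supercritical case, and the same barrier/trapping argument below the larger root in the subcritical case (the paper phrases it as an explicit comparison $\phi_{3,a}\leq\psi_2-\epsilon$ after choosing $a$ large enough that $T_3\leq-(n-1)\sqrt{-k}+\epsilon'$, rather than via hyperbolicity of the limit ODE, but it is the same mechanism). Where you genuinely diverge is the critical case $\alpha=\bar\alpha$. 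The paper does \emph{not} perform the quantitative balance you identify as the main obstacle: it first observes that the translation-invariant solution $\phi_2$ of the $T_2$-model is strictly increasing with $\lim_{t\to\infty}\phi_2(t)=\psi<\pi_p/2$ (so $b(2,0)=\infty$), and then uses that $T_3(a+\cdot)\to T_2$ in $C^1_{loc}$, hence $\phi_{3,a}(\cdot+a)\to\phi_2$ locally uniformly, which immediately forces $\delta(3,a)=b(3,a)-a\to\infty$. This soft compactness argument is two lines and avoids any discussion of the quadratic degeneracy at $\phi_\star$. Your sub-/supersolution passage-time estimate (crossing a $\sqrt{\epsilon(a)}$-neighborhood of $\phi_\star$ costs time $\gtrsim\epsilon(a)^{-1/2}$) is sound — the needed one-sided bound $\bar\alpha+Ch(\phi)\leq C_1(\phi-\phi_\star)^2$ follows from Taylor's theorem regardless of whether $h''(\phi_\star)>0$, and the perturbation is indeed $O(e^{-2\sqrt{-k}t})$ — and it buys you something the paper does not have, namely an explicit lower rate for the divergence of $\delta(3,a)$; but it is considerably more work than necessary for the qualitative statement. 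One small point worth tightening in your subcritical case for arbitrary $a$: rather than tracking translates of basins (and worrying about landing on a repeller), it is cleaner to first record, as the paper does, that $\ddot\phi=-\alpha\dot T_3/T_3<0$ at any zero of $\dot\phi$, so $\phi_{3,a}$ is eventually monotone and always admits a limit, which can then only be a root of the limiting equation or $-$eventually trapped$-$ below one.
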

\begin{proof}
We study the IVP:
\begin{gather}
 \begin{cases}
  \dot \phi = \alpha - \frac{T_3}{p-1} \cosp^{p-1}(\phi)\sinp(\phi)\\
  \phi(a)=-\frac{\pi_p}{2}
 \end{cases}
\end{gather}
We will only prove the claims on $\delta(3,a)$, and restrict ourselves to the case $a\geq -\bar a$. The other claims can be proved using a similar argument. 

Note that if there exists some $\bar t$ such that $\phi(\bar t)=0$, then necessarily $\bar t\geq 0$ and, for $s\in[\bar t, \phi^{-1}(\pi_p/2)]$, $\dot \phi(s)\geq \alpha$. So in this case $b-a<\bar t+\pi_p/(2\alpha)<\infty$.

Thus $b(3,a)$ (or equivalently $\delta(3,a)$) can be infinite only if $\phi<0$ indefinitely. Note that either $\dot \phi >0$ always, or $\dot \phi<0$ for all $t$ large. In fact, at those points where $\dot \phi=0$ we have:
\begin{gather}
 \ddot \phi = -\frac{\dot T_3}{T_3} \alpha\, .
\end{gather}
Since $\dot T_3<0$, and $T_3(t)<0$ for all $t>0$, once $\dot \phi$ is negative it can never turn positive again. So $\phi$ has always a limit at infinity, finite or otherwise.

By simple considerations on the ODE, if $b(3,a)=\infty$, $\lim_{t\to \infty} \phi(t)$ can only be a solution $\psi$ of:
\begin{gather}\label{eq_psi}
 0=\alpha- \frac{-(n-1)\sqrt{-k} }{p-1} \cosp^{(p-1)}(\psi)\sinp(\psi)\equiv F(\psi)\, .
\end{gather}
Since $\alpha>0$, it is evident that this equation does not have solutions in $[0,\pi_p/2]$. Studying the function $\cosp^{(p-1)}(\psi)\sinp(\psi)$ on $\ton{-\pi_p/2,0}$, we notice that this function is negative and has a single minimum $-l$ on this interval.  Now set
\begin{gather}\label{eq_baralpha}
 \bar \alpha = \frac{(n-1)l\sqrt{-k}}{(p-1)}\, ,
\end{gather}
so that for $\alpha >\bar \alpha$, $F(\psi)$ has a positive minimum, for $\alpha=\bar \alpha$, its minimum is zero, and for $0<\alpha <\bar \alpha$, its minimum is negative.

\paragraph{\textbf{Case 1: $\alpha=\bar \alpha$}}
Before turning to the model $T_3$, in this case we briefly discuss what happens in the model $T_2$ \footnote{recall that this model is translation invariant, so we only need to study the solution $\phi_{2,0}=\phi_2$}, in particular we study the function:
\begin{gather}
 \begin{cases}
  \dot \phi_{2}= \alpha + \frac{(n-1)\sqrt{-k}}{p-1}\cosp^{(p-1)}(\phi_2)\sinp(\phi_2)=F(\phi_2)\\
  \phi_2(0)=-\frac{\pi_p}2\, .
 \end{cases}
\end{gather}
Since $\alpha=\bar \alpha$, it is easy to see that $\dot \phi_2\geq0$ everywhere. Let $\psi\in\ton{-\pi_p/2,0}$ be the only solution of $F(\psi)=0$. Since $\psi$ satisfies the differential equation $\dot \psi = F(\psi)$, by uniqueness we have that $\phi_2\leq \psi$ everywhere, and thus the function $\phi_2$ is strictly increasing and has a finite limit at infinity:
\begin{gather}
 \lim_{t\to \infty} \phi_2(t)=\psi\, .
\end{gather}
With this information in mind, we turn our attention back to the model $T_3$. In some sense, the bigger $a$ is, the closer the function $T_3(a+t)$ is to the constant function $T_2$. Consider in fact the solution $\phi_{3,a}(t)$, and for convenience translate the independent variable by $t\to t-a$. The function $\tau \phi_{3,a} (t) = \phi_{3,a} (t+a)$ solves:
\begin{gather}
  \begin{cases}
  \tau \dot \phi_{3,a}= \alpha- \frac{T_3(a+t)}{p-1}\cosp^{(p-1)}(\phi)\sinp(\phi)\\
  \tau \phi_{3,a}(0)=-\frac{-\pi_p}2
 \end{cases}
\end{gather}
Since $T_3(a+t)$ converges in $C^1([0,\infty))$ to $T_2=-(n-1)\sqrt{-k}$, we have that
\begin{gather}
 \lim_{a\to \infty} \tau \phi_{3,a} = \phi_2
\end{gather}
in the sense of local $C^1$ convergence on $[0,\infty)$. This implies immediately that:
\begin{gather}
\lim_{a\to \infty} \delta(3,a)= \lim_{a\to \infty} b(3,a)-a =b(2,0)=\infty\, .
\end{gather}

\paragraph{\textbf{Case 2: }}
if $0<\alpha <\bar \alpha$, there are two solutions $-\pi_p/2<\psi_1< \psi_2 <0$ to equation \eqref{eq_psi}. Take $\epsilon>0$ small enough such that $\psi_2-\epsilon>\psi_1$. Thus there exists $\epsilon'>0$ such that
\begin{gather}
 \frac{d}{dt}(\psi_2-\epsilon)=0>\alpha+ \frac{(n-1)\sqrt{-k}-\epsilon'}{(p-1)}\cosp^{(p-1)}(\psi_2-\epsilon)\sinp(\psi_2-\epsilon)>\\
\notag >\alpha+ \frac{(n-1)\sqrt{-k}}{(p-1)}\cosp^{(p-1)}(\psi_2-\epsilon)\sinp(\psi_2-\epsilon)\, .
\end{gather}
Since $\lim_{t\to \infty} T_3(t)=-(n-1)\sqrt{-k}$, there exists an $A>>1$ such that $T_3(t)\leq -(n-1)\sqrt{-k}+\epsilon'$ for $t\geq A$. 

Choose $a>A$, and consider that, as long as $\phi_{3,a}(t)<0$:
\begin{gather}
 \begin{cases}
  \dot \phi_{3,a}= \alpha- \frac{T}{p-1}\cosp^{(p-1)}(\phi)\sinp(\phi)\leq \alpha + \frac{-\epsilon'+(n-1)\sqrt{-k} }{p-1} \cosp^{(p-1)}(\phi)\sinp(\phi)\\
  \phi_{3,a}(a)=-\frac{-\pi_p}2
 \end{cases}
\end{gather}
Then, by a standard comparison theorem for ODE, $\phi_{3,a}\leq \psi_2-\epsilon$ always.

In particular, $\lim_{t\to \infty}\phi_{3,a}(t)=\psi_1$, and using equation \eqref{eq_pf_nege} we also have
\begin{gather}\label{eq_0lim}
 \lim_{t\to \infty} e_{3,a}= \lim_{t\to \infty} w_{3,a}=0\, .
\end{gather}
It is also evident that, if $a>A$, $\delta(3,a)=\infty$.

\paragraph{\textbf{Case 3}}

If $\alpha > \bar \alpha$, then there exists a positive $\epsilon$ such that $\dot \phi_{i,a}\geq \epsilon$ for $i=2,3$ and all $a\in \R$. Thus with a simple estimate we obtain for $i=2,3$:
\begin{gather}
 \delta(i,a)\leq \frac{\pi_p}{\epsilon} \, .
\end{gather}
Moreover, as in case $1$, it is easy to see that $\phi_{3,a}(t-a)$ converges locally uniformly in $C^1$ to $\phi_{2,0}$, and so, in particular:
\begin{gather}
 \lim_{a\to \infty} \delta(3,a) = b(2,0)\, .
\end{gather}
\end{proof}

As for the model $T_1$, an analogous argument leads to the following Proposition:
\begin{proposition}
 Consider the model $T_1$. If $\alpha>\bar \alpha$, then the solutions have an oscillatory behavior with $\phi_{1,a}(\infty)=\infty$ and $\delta(1,a)<\infty$ for all $a\in [0,\infty)$. If $\alpha\leq \bar \alpha$, then $\phi_{1,a}$ has a finite limit at infinity and $\delta(1,a)=\infty$ for all $a\in [0,\infty)$. 
\end{proposition}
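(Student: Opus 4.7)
The plan is to mirror the proof of Proposition \ref{prop_alpha} for the model $T_3$, tracking the key difference that $T_1(t) = -(n-1)\sqrt{-k}\coth(\sqrt{-k}\,t)$ is \emph{increasing} on $(0,\infty)$ from $-\infty$ up to $T_2 = -(n-1)\sqrt{-k}$, whereas $T_3$ was \emph{decreasing} from $0$ down to $T_2$. Both models share the asymptote $T_i \to T_2$, so candidates for the limit of $\phi_{1,a}(t)$ at infinity coincide with those in the $T_3$ analysis, namely the zeros of
\begin{gather*}
F_2(\psi) := \alpha - \frac{T_2}{p-1}\cosp^{(p-1)}(\psi)\sinp(\psi),
\end{gather*}
which all lie in $(-\pi_p/2,0)$ and whose existence is governed by the threshold $\bar\alpha$ from \eqref{eq_baralpha}.

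The first step is monotonicity. From the Pr\"ufer ODE
\begin{gather*}
\dot\phi_{1,a} = \alpha - \frac{T_1(t)}{p-1}\cosp^{(p-1)}(\phi_{1,a})\sinp(\phi_{1,a}), \qquad \phi_{1,a}(a)=-\pi_p/2,
\end{gather*}
the vanishing of $\cosp(-\pi_p/2)$ gives $\dot\phi_{1,a}(a)=\alpha>0$. At any putative interior critical point $\dot\phi=0$, differentiating the ODE yields $\ddot\phi = -\dot T_1\,\alpha/T_1$, which is \emph{positive} because $\dot T_1>0$ and $T_1<0$. This prevents $\dot\phi$ from crossing zero from above, so $\phi_{1,a}$ is strictly increasing throughout $[a,\infty)$. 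Consequently, either $\phi_{1,a}$ reaches $\pi_p/2$ in finite time (so $\delta(1,a)<\infty$) or it has a finite limit $L$; in the latter case $\dot\phi\to 0$ and, passing to the limit using $T_1(t)\to T_2$, $L$ must satisfy $F_2(L)=0$, so $L\in(-\pi_p/2,0)$.

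When $\alpha>\bar\alpha$, $F_2$ has no zeros at all, so the finite-limit alternative is impossible and $\delta(1,a)<\infty$ for every $a\in[0,\infty)$. When $\alpha\leq\bar\alpha$, we compare with the translation-invariant $T_2$-solution $\phi_{2,a}(t):=\phi_{2,0}(t-a)$, which agrees with $\phi_{1,a}$ at $t=a$. Since $T_1(t)\leq T_2$ and $\cosp^{(p-1)}(\phi)\sinp(\phi)\leq 0$ for $\phi\in[-\pi_p/2,0]$, the right-hand side of the $T_1$ ODE is pointwise at most that of the $T_2$ ODE on this range, so standard ODE comparison gives $\phi_{1,a}(t)\leq\phi_{2,a}(t)$ as long as both stay $\leq 0$. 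By Cases 1--2 of Proposition \ref{prop_alpha}, $\phi_{2,0}$ has a finite limit strictly less than $0$ (the unique zero $\psi^*$ of $F_2$ when $\alpha=\bar\alpha$, or the smaller zero $\psi_1$ when $\alpha<\bar\alpha$), so the comparison region is self-consistent and $\phi_{1,a}$ remains trapped below $0$; together with monotonicity this yields a finite limit and $\delta(1,a)=\infty$.

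The main obstacle is the boundary case $a=0$, where $T_1$ is singular. The critical-point identity and the ODE comparison must then be justified at or near $t=0$, but this is handled by the $C^1[0,\infty)$ regularity granted by Proposition \ref{prop_exun2} together with the vanishing of $\cosp^{(p-1)}$ at the initial value $-\pi_p/2$, which absorbs the singularity of $T_1$. When necessary one may run the comparison on $[\varepsilon,\infty)$ and send $\varepsilon\to 0$ using the continuous dependence on $a$ asserted in Proposition \ref{prop_exun2}.
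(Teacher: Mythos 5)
Your proposal is correct and takes essentially the approach the paper intends: the paper gives no proof of this proposition, stating only that it follows by an argument analogous to Proposition \ref{prop_alpha}, and your write-up is precisely that adaptation, with the sign $\dot T_1>0$, $T_1<0$ correctly forcing $\dot \phi>0$, and the global inequality $T_1\leq T_2$ correctly giving the comparison (and hence $\delta(1,a)=\infty$ for \emph{all} $a\geq 0$ when $\alpha\leq\bar\alpha$, rather than only for large $a$ as in the $T_3$ case). The only minor imprecision is at $a=0$, where $\dot\phi(a)$ is not $\alpha$ but a smaller positive limit (the $1/t$ singularity of $T_1$ cancels against the linear vanishing of $\cosp^{(p-1)}(\phi)$), but your deferral to Proposition \ref{prop_exun2} and a limiting argument on $[\varepsilon,\infty)$ handles this.
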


Now we turn our attention to the maximum $m(i,a)$ of the model functions $w_{i,a}$. Our objective is to show that for every possible $0<{u^\star}\leq 1$, there exists a model such that $m(i,a)={u^\star}$. This is immediately seen to be true if $\alpha\leq \bar \alpha$. Indeed, in this case we have:
\begin{proposition}\label{prop_kasmall}
 Let $\alpha\leq \bar \alpha$. Then for each $0<{u^\star}\leq 1$, there exists an $a\in[-\bar a, \infty)$ such that $m(3,a)={u^\star}$.
\end{proposition}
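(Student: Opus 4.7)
The plan is to apply the intermediate value theorem to the map $a \mapsto m(3,a)$ on $[-\bar a, \infty)$, using the boundary data $m(3,-\bar a) = 1$ together with the decay $m(3,a) \to 0$ as $a \to \infty$. Continuity of $m(3,\cdot)$ follows from Proposition \ref{prop_exun2}: the solutions $w_{3,a}$ depend continuously on $a$ in the local $C^1$ sense, and in \pf variables the phases $\phi_{3,a}$ solving \eqref{eq_pf_neg} inherit this continuous dependence. It then propagates to the first time $b(3,a)$ at which $\phi_{3,a}$ reaches $\pi_p/2$ whenever that time is finite, and consequently to $m(3,a) = \alpha^{-1} e_{3,a}(b(3,a))$. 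The endpoint value $m(3,-\bar a) = 1$ is immediate from Proposition \ref{prop_barasym}, since the odd solution $w_{3,-\bar a}$ attains maximum $1$ at $t = \bar a = b(3,-\bar a)$.

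For the asymptotic $m(3,a)\to 0$ as $a \to \infty$, I split into the two subcases of $\alpha \leq \bar\alpha$. If $\alpha < \bar\alpha$, Case 2 in the proof of Proposition \ref{prop_alpha} produces a threshold $A$ such that for every $a > A$ one has $\delta(3,a) = \infty$ and $\lim_{t\to\infty}w_{3,a}(t) = 0$ by \eqref{eq_0lim}; the convention that $m = 0$ when $b = \infty$ then gives $m(3,a) = 0$ for all $a > A$. If $\alpha = \bar\alpha$, then $\delta(3,a) < \infty$ for every finite $a$ but $\delta(3,a) \to \infty$ by Case 1; integrating \eqref{eq_pf_nege} yields
\begin{equation*}
 \log\frac{e_{3,a}(b(3,a))}{\alpha} = \int_a^{b(3,a)} \frac{T_3(s)}{p-1}\cosp^{p}(\phi_{3,a}(s))\,ds.
\end{equation*}
Since $T_3(s) < 0$ for $s > 0$ and the translated phases $\phi_{3,a}(\cdot + a)$ converge locally uniformly to $\phi_2$, which spends arbitrarily long intervals close to $\psi \in (-\pi_p/2, 0)$ where $\cosp(\psi) \neq 0$, the integrand is bounded above by a strictly negative constant on arbitrarily long subintervals of $[a, b(3,a)]$. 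Combined with $\delta(3,a) \to \infty$, this forces the integral to diverge to $-\infty$, so $e_{3,a}(b(3,a)) \to 0$ and $m(3,a) \to 0$.

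With continuity, $m(3,-\bar a) = 1$, and $m(3,a) \to 0$ established, the intermediate value theorem produces, for every $u^\star \in (0,1]$, an $a \in [-\bar a, \infty)$ with $m(3,a) = u^\star$. The main obstacle is the quantitative decay in the borderline case $\alpha = \bar\alpha$: there $m(3,a)$ is strictly positive for every finite $a$ and tends to zero only asymptotically, so one cannot avoid the precise amplitude estimate above and must carefully quantify how long $\phi_{3,a}$ lingers near $\psi$ in order to convert $\delta(3,a) \to \infty$ into $e_{3,a}(b(3,a)) \to 0$.
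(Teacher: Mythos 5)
Your proposal follows the same structure as the paper's proof: both establish $m(3,-\bar a)=1$ via Proposition~\ref{prop_barasym}, invoke continuity of $m(3,\cdot)$ from continuous dependence on parameters, show that $m(3,a)$ decays to $0$, and conclude by the intermediate value theorem. Where you go further is in the borderline case $\alpha=\bar\alpha$, which the paper dismisses with ``or a similar argument for $\alpha=\bar\alpha$'': your use of the integral identity for $\log e_{3,a}$ together with the local $C^1$ convergence $\phi_{3,a}(\cdot+a)\to\phi_2$ lingering near $\psi$ is exactly the right quantitative input, and it is a genuine improvement in explicitness over the paper.

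One imprecision worth flagging: in the subcase $\alpha<\bar\alpha$ you observe $m(3,a)=0$ for $a>A$ and infer $m(3,a)\to 0$ as $a\to\infty$. That limit is trivially true by your own convention, but it is not the statement the intermediate value theorem needs. Since $\delta(3,\cdot)$ jumps to infinity at some first $a^\star\leq A$, the function $m(3,\cdot)$ is \emph{a priori} only known to be continuous on $[-\bar a,a^\star)$, so what must be shown is the one-sided limit $\lim_{a\to (a^\star)^-}m(3,a)=0$ (which is what the paper writes, taking $a\to a^\star$ rather than $a\to\infty$). The gap is easy to close — for $t>0$ equation~\eqref{eq_pf_nege} makes $e_{3,a}$ nonincreasing, so $e_{3,a}(b(3,a))\leq e_{3,a}(T)$ for any fixed $T<b(3,a)$; combine with $e_{3,a^\star}(T)\to 0$ as $T\to\infty$ (equation~\eqref{eq_0lim}) and continuous dependence at time $T$ to control the diagonal limit — but as written your argument does not say why the positive values of $m(3,a)$ for $a$ just below $a^\star$ become arbitrarily small.
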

\begin{proof}
 Proposition \ref{prop_barasym} shows that this is true for ${u^\star}=1$. For the other values, we know that if $\alpha\leq \bar \alpha$:
\begin{gather}
 \lim_{a\to \infty}\delta(3,a)=\infty\, .
\end{gather}
By equation \eqref{eq_0lim} (or a similar argument for $\alpha=\bar \alpha$) and using the continuity with respect to the parameters of the solution of our ODE, it is easy to see that:
\begin{gather}
 \lim_{a\to a^\star} m(3,a)= 0 \, ,
\end{gather}
where $a^\star $ is the first value for which $\delta(3,a^\star)=\infty$ (which may be infinite if $\alpha=\bar \alpha$).

Since $m(3,a)$ is a continuous function and $m(3,-\bar a)=1$, we have proved the Proposition.
\end{proof}

The case $\alpha>\bar \alpha$ requires more attention. First of all, we prove that the function $m(i,a)$ is invertible.
\begin{proposition}
If $m(i,a)=m(i,s)>0$, then $w_{i,a}$ is a translation of $w_{i,s}$. In particular, if $i\neq 2$, $a=s$.
\end{proposition}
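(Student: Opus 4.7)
The plan is to apply the gradient comparison Theorem~\ref{grad_est_negw} in both directions to show that $w_{i,s}$ must be an exact translate of $w_{i,a}$, and then exploit the strict monotonicity of $T_i$ in the cases $i=1,3$ to rule out any nontrivial translation. First I would observe that the hypothesis $m(i,a)=m(i,s)=m>0$ forces
\[
w_{i,a}[a,b(i,a)] \;=\; [-1,m] \;=\; w_{i,s}[s,b(i,s)]\,,
\]
so both range-inclusion hypotheses of Theorem~\ref{grad_est_negw} hold simultaneously. Applying that comparison with $(w_1,w_2)=(w_{i,a},w_{i,s})$ and then with the roles reversed yields the pointwise identity
\[
\dot w_{i,a}\bigl(w_{i,a}^{-1}(\sigma)\bigr) \;=\; \dot w_{i,s}\bigl(w_{i,s}^{-1}(\sigma)\bigr) \qquad \forall\sigma\in(-1,m)\,.
\]

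Next I would reparametrize by $\sigma$: viewing $t=w^{-1}(\sigma)$, the above equality is exactly $\tfrac{d}{d\sigma}w_{i,a}^{-1}=\tfrac{d}{d\sigma}w_{i,s}^{-1}$ on $(-1,m)$. Integrating and using $w_{i,a}^{-1}(-1)=a$, $w_{i,s}^{-1}(-1)=s$, one obtains $w_{i,s}^{-1}(\sigma)-w_{i,a}^{-1}(\sigma)=s-a$ for all $\sigma$, which in the $t$-variable says that the shifted function $v(t):=w_{i,a}(t-(s-a))$ coincides with $w_{i,s}(t)$ on the whole interval $[s,b(i,s)]$. Now both $v$ and $w_{i,s}$ solve \eqref{eq_1dm_neg}, except that $v$ carries the shifted drift $T_i(t-(s-a))$. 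Subtracting the two ODEs and using $v\equiv w_{i,s}$ produces
\[
\bigl[T_i(t)-T_i(t-(s-a))\bigr]\,\dot w_{i,s}^{(p-1)}(t)\;=\;0\,,
\]
and on the interior of $[s,b(i,s)]$ we have $\dot w_{i,s}>0$, so $T_i(t)=T_i(t-(s-a))$ on an open interval. For $i=2$ this is vacuous since $T_2$ is constant, and we recover only the translation statement. For $i=1$ or $i=3$, the explicit formulas for $T_i$ show it is strictly monotonic on its natural domain, hence injective, so $s-a=0$ as required.

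The step that requires the most care is the reparametrization $\sigma\mapsto w^{-1}(\sigma)$ near the endpoints $\sigma=-1$ and $\sigma=m$, where $\dot w$ vanishes and hence $1/\dot w$ is singular. The remedy is to note that by Propositions~\ref{prop_exun1}--\ref{prop_exun2} and the definition of $b(i,a)$, $w$ is $C^1$ and strictly monotone on $[a,b(i,a)]$, so $w^{-1}$ is a continuous bijection from $[-1,m]$ onto $[a,b(i,a)]$; one then runs the integration argument on sub-intervals $[-1+\eta,m-\eta]$ and sends $\eta\to 0$, invoking continuity of $w^{-1}$ at the endpoints to pass to the limit. This technical point is in any event already built into the analysis behind Theorem~\ref{grad_est_negw}.
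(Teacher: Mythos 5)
Your proof is correct and follows essentially the same route as the paper: apply Theorem~\ref{grad_est_negw} in both directions to equate the derivative profiles, deduce that $w_{i,s}$ is a translate of $w_{i,a}$, and conclude $a=s$ for $i\neq 2$. Your argument merely makes explicit two steps the paper leaves terse, namely the integration of the inverse functions (the paper invokes uniqueness for the IVP) and the use of strict monotonicity of $T_1$, $T_3$ to exclude a nontrivial translation.
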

\begin{proof}
 Note that if $i=2$, the model is translation invariant and the proposition is trivially true. In the other cases, the proof follows from an application of Theorem \ref{grad_est_negw}. Since $m(i,a)=m(i,s)>0$, we know that $b(i,a)$ and $b(i,s)$ are both finite. So our hypothesis imply that:
\begin{gather}
 \dot w_{i,a}|_{w_{i,a}^{-1}} = \dot w_{i,s}|_{w_{i,s}^{-1}}\, .
\end{gather}
By the uniqueness of the solutions of the IVP \eqref{eq_1dm_neg}, we have that $w_{i,a}(t)=w_{i,s}(t+t_0)$, which, if $i\neq 2$, is possible only if $a=s$.
\end{proof}

If $\alpha>\bar \alpha$, then $m(2,a)$ is well-defined, positive, strictly smaller than $1$ and independent of $a$. We define $m_2=m(2,a)$.
\begin{proposition}\label{p_1}
 If $\alpha>\bar \alpha$, then $m(3,a)$ is a decreasing function of $a$, and:
\begin{gather}
 \lim_{a\to \infty} m(3,a)= m_2\, .
\end{gather}
\end{proposition}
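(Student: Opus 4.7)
The plan is to prove strict monotonicity and the limit $\lim_{a\to\infty}m(3,a)=m_2$ separately, then combine them. For monotonicity I combine continuity of $a\mapsto m(3,a)$ with the injectivity furnished by the preceding Proposition to conclude strict monotonicity; the direction (decreasing) is then fixed by comparing the endpoint value $m(3,-\bar a)=1$ against the limit $m_2<1$.

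Continuity of $m(3,\cdot)$ follows from continuous dependence of the ODE (Propositions \ref{prop_exun1}--\ref{prop_exun2}) together with the transversality of the \pf angle at $b(3,a)$: equation \eqref{eq_pf_neg} gives $\dot\phi_{3,a}(b(3,a))=\alpha>0$ since $\cosp(\pi_p/2)=0$. The implicit function theorem then makes $b(3,\cdot)$ continuous, hence $m(3,a)=w_{3,a}(b(3,a))$ is continuous in $a$. The hypothesis $\alpha>\bar\alpha$ ensures $b(3,a)<\infty$ for every $a\in\R$, and the \pf representation $\alpha w=e\sinp(\phi)$ forces $m(3,a)=\alpha^{-1}e_{3,a}(b(3,a))>0$. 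By the preceding Proposition, $m(3,\cdot)$ is then injective on $\R$, and a continuous injective function on an interval is strictly monotonic.

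For the limit, consider the shifted function $\tilde w_a(t):=w_{3,a}(a+t)$, which satisfies
\[
\frac{d}{dt}\bigl(\dot{\tilde w}_a\bigr)^{(p-1)} - T_3(a+t)\bigl(\dot{\tilde w}_a\bigr)^{(p-1)} + \lambda\,\tilde w_a^{(p-1)}=0,\qquad \tilde w_a(0)=-1,\ \dot{\tilde w}_a(0)=0.
\]
Since $T_3(a+\,\cdot\,)\to T_2$ locally in $C^1([0,\infty))$ as $a\to\infty$, continuous dependence yields $\tilde w_a\to w_{2,0}$ locally uniformly in $C^1$. Transversality at $b(2,0)$ then gives $b(3,a)-a\to b(2,0)$, and hence $m(3,a)=\tilde w_a(b(3,a)-a)\to w_{2,0}(b(2,0))=m_2$. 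This is essentially the convergence mechanism already used in Case $3$ of the preceding Proposition to compute $\lim_{a\to\infty}\delta(3,a)=b(2,0)$.

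To finish, the inequality $m_2<1$ follows from integrating the \pf energy identity \eqref{eq_pf_nege} for the $T_2$ model:
\[
\log m_2=\frac{T_2}{p-1}\int_0^{b(2,0)}\cosp^{p}(\phi_{2,0}(s))\,ds<0,
\]
since $T_2=-(n-1)\sqrt{-k}<0$ and $\cosp(\phi_{2,0})$ is not identically zero on $[0,b(2,0)]$. Combined with $m(3,-\bar a)=1$ from Proposition \ref{prop_barasym}, strict monotonicity must be decreasing. The main technical point is the joint continuity of the first-return map $a\mapsto b(3,a)-a$ under the varying coefficient $T_3(a+\,\cdot\,)$; this reduces to standard continuous dependence once transversality of $\phi$ at the return time is noted.
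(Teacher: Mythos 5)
Your proof is correct and follows essentially the same route as the paper's (which is only two sentences long): injectivity from the preceding proposition plus continuity of $m(3,\cdot)$ gives strict monotonicity, and the local $C^1$-convergence of the shifted $T_3$ model to the $T_2$ model (already established in Case 3 of Proposition \ref{prop_alpha}) gives the limit $m_2$. The extra details you supply — the transversality argument ($\dot\phi=\alpha>0$ at $\phi=\pi_p/2$) justifying continuity of $b(3,\cdot)$, and the \pf energy identity showing $m_2<1$, which the paper asserts without proof and which is needed to fix the direction of monotonicity — are all sound.
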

\begin{proof}
 This proposition is an easy consequence of the convergence property described in the proof of Proposition \ref{prop_alpha}. Since $m(3,a)$ is continuous, well defined on the whole real line and invertible, it has to be decreasing.
\end{proof}

We have just proved that, for $a\to \infty$, $m(3,a)$ decreases to $m_2$. With a similar technique, we can show that, for $a\to \infty$, $m_{1,a}$ increases to $m_2$.
\begin{proposition}\label{p_2}
 If $\alpha>\bar \alpha$, then for all $a\geq 0$, $b(1,a)<\infty$. Moreover, $m(1,a)$ is an increasing function on $[0,\infty)$ such that:
\begin{gather}
 m(1,0)=m_0>0 \quad \text{and} \quad \lim_{a\to \infty} m(1,a)=m_2\, .
\end{gather}
\end{proposition}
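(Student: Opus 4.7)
My plan is to follow the approach of Proposition~\ref{p_1}, exploiting two analogous features of $T_1$: the convergence $T_1(t)\to T_2=-(n-1)\sqrt{-k}$ as $t\to\infty$, and the same Riccati identity \eqref{eq_T}. The crucial change in sign is $T_1(t)<T_2$ on $(0,\infty)$ (opposite to $T_3>T_2$ used for Proposition~\ref{p_1}), which will force $m(1,\cdot)$ to be increasing rather than decreasing.

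For the finiteness $b(1,a)<\infty$ I first show $\dot\phi_{1,a}>0$ strictly. Indeed, at any hypothetical zero $\bar t$ the same calculation as in Proposition~\ref{prop_alpha} gives $\ddot\phi(\bar t)=-\dot T_1\alpha/T_1>0$ (since $\dot T_1>0$ and $T_1<0$ on $(0,\infty)$), so $\dot\phi$ can only cross $0$ from negative to positive; but $\dot\phi(a)=\alpha>0$ (and an asymptotic analysis of \eqref{eq_1dm_negmu} near $t=0$ yields $\dot\phi(0^+)=\alpha/n>0$ in the boundary case $a=0$), so no such zero is possible. Since $\alpha>\bar\alpha$, Case~3 of Proposition~\ref{prop_alpha} supplies $\epsilon_0>0$ with $\dot\phi_{2,0}\geq\epsilon_0$ uniformly, and the convergence $T_1(\cdot+a)\to T_2$ produces $A$ beyond which $\dot\phi_{1,a}\geq\epsilon_0/2$. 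Thus $\phi_{1,a}$ reaches $\pi_p/2$ in finite time and $b(1,a)<\infty$, whence $m(1,0)=\alpha^{-1}e_{1,0}(b(1,0))>0$.

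Setting $\tau\phi_{1,a}(t)\equiv\phi_{1,a}(t+a)$, the translate satisfies the ODE driven by $T_1(\cdot+a)$, which converges in $C^1_{\mathrm{loc}}([0,\infty))$ to $T_2$ as $a\to\infty$; by continuous dependence (Proposition~\ref{prop_exun2}), $\tau\phi_{1,a}\to\phi_{2,0}$ and $\tau e_{1,a}\to e_{2,0}$ locally uniformly, yielding $m(1,a)\to m_2$. Continuity of $m(1,\cdot)$ comes from the same Proposition, and injectivity from the previous proposition (applied with $i=1\neq 2$), so $m(1,\cdot)$ is strictly monotone on $[0,\infty)$. To pin down the direction, divide the $e$-ODE \eqref{eq_pf_nege} by the $\phi$-ODE to obtain
\[
  \frac{d\log e}{d\phi}=g\bigl(T(t(\phi)),\phi\bigr),\qquad g(T,\phi)=\frac{T\cosp^{p}(\phi)/(p-1)}{\alpha-T\cosp^{p-1}(\phi)\sinp(\phi)/(p-1)},
\]
with $\partial_T g=\alpha\cosp^p(\phi)/\bigl((p-1)(\alpha-T\cosp^{p-1}\sinp/(p-1))^2\bigr)\geq 0$, strictly positive wherever $\cosp(\phi)\neq 0$. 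Since $T_1(s)<T_2$ strictly on the integration range and the change of variables is clean (thanks to $\dot\phi_{1,a}>0$), integrating from $-\pi_p/2$ to $\pi_p/2$ yields $\log m(1,a)<\log m_2$. Combined with strict monotonicity and $m(1,a)\to m_2$, this forces $m(1,\cdot)$ to be strictly increasing with $m(1,0)=m_0\in(0,m_2)$ and $m(1,a)\nearrow m_2$.

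The principal obstacle I anticipate is the $a=0$ boundary case, where $T_1\sim-(n-1)/t$ is singular. The argument for $\dot\phi>0$ and the finiteness of $\log m(1,0)=\int_0^{b(1,0)} T_1/(p-1)\cosp^p(\phi_{1,0})\,dt$ both rely on the cancellation afforded by the asymptotic $\cosp(\phi_{1,0})\sim(\lambda t/n)^{1/(p-1)}/\alpha$ near the origin, which must be extracted by integrating the divergence form \eqref{eq_1dm_negmu} directly using $w\approx-1$ and $\mu_1(s)\sim(\sqrt{-k}s)^{n-1}$. Once this asymptotic is in hand the singularity of $T_1\cosp^p(\phi)\sim t^{1/(p-1)}$ is harmless, and the $g$-monotonicity argument above goes through without modification.
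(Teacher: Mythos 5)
Your proposal is correct, and it supplies in full the proof that the paper omits entirely (the paper only says ``with a similar technique'' and points back to Proposition \ref{p_1}). The skeleton is the same as the paper's intended one: $\dot\phi_{1,a}>0$ via the sign of $\ddot\phi=-\alpha\dot T_1/T_1$ at zeros of $\dot\phi$, finiteness of $b(1,a)$ from the uniform lower bound on $\dot\phi$ available when $\alpha>\bar\alpha$, the limit $m(1,a)\to m_2$ from local $C^1$ convergence of the translated problem to the $T_2$ model, and strict monotonicity from continuity plus the injectivity proposition. The one place where you go beyond the paper's sketch is in fixing the \emph{direction} of the monotonicity: for $i=3$ the paper can read it off from $m(3,-\bar a)=1$ being the largest possible value, whereas for $i=1$ no such anchor exists, and your observation that $d\log e/d\phi=g(T,\phi)$ with $\partial_T g=\alpha\cosp^p(\phi)/\bigl((p-1)\dot\phi^2\bigr)\geq 0$, combined with $T_1<T_2$, gives $m(1,a)<m_2$ for every finite $a$ and hence forces ``increasing''. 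That computation is correct and is a genuinely needed ingredient the paper glosses over. Your treatment of the singular endpoint $a=0$ (the asymptotics $\dot w\sim(\lambda t/n)^{1/(p-1)}$ making $T_1\cosp^p(\phi)\sim t^{1/(p-1)}$ integrable, so $\log e(b(1,0))$ is finite and $m_0>0$) is also right and is exactly the boundary-case care the paper defers to Proposition \ref{prop_exun2}.
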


Using the continuity of $m(i,a)$ with respect to $a$, we get as a corollary the following proposition.
\begin{proposition}\label{p3}
 For $\alpha>\bar \alpha$ and for any $u^\star \in [m(1,0),1]$, there exists some $a\in \R$ and $i\in \{1,2,3\}$ such that $m(i,a)=u^\star$.
\end{proposition}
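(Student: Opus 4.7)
The plan is to reduce the statement to two applications of the intermediate value theorem, one for the symmetric model $T_3$ and one for the singular model $T_1$, and glue them together at the common limiting value $m_2$ provided by the translation-invariant model $T_2$.

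First I would split the target interval as $[m(1,0),1]=[m(1,0),m_2]\cup[m_2,1]$. This splitting is legitimate because Propositions \ref{p_1} and \ref{p_2} guarantee $m(1,0)<m_2<1$: indeed $m(1,a)\nearrow m_2$ and $m(3,a)\searrow m_2$ with $m(3,-\bar a)=1$, so $m_2$ lies strictly between $m(1,0)$ and $1$. If $u^\star=m_2$, pick $i=2$ and any $a\in\R$, using that $m(2,a)$ is the constant $m_2$ when $\alpha>\bar\alpha$.

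For $u^\star\in(m_2,1]$, I would apply the intermediate value theorem to the function $a\mapsto m(3,a)$ on $[-\bar a,\infty)$. By Proposition \ref{prop_barasym} one has $m(3,-\bar a)=1$, and by Proposition \ref{p_1} $m(3,a)$ is a continuous, strictly decreasing function of $a$ with $\lim_{a\to\infty}m(3,a)=m_2$. Continuity here is a direct consequence of the continuous dependence of $w_{3,a}$ on the initial data guaranteed by Propositions \ref{prop_exun1}--\ref{prop_exun2}, together with the fact that $\dot w_{3,a}$ crosses zero transversally at $b(3,a)$ (this transversality follows from $\alpha>\bar\alpha$, which forces $\dot\phi\ge\varepsilon>0$ uniformly as shown in Case 3 of the proof of Proposition \ref{prop_alpha}, so the implicit function theorem gives smooth dependence of $b(3,a)$ on $a$). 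Hence every value in $[m_2,1]$ is attained by $m(3,\cdot)$.

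For $u^\star\in[m(1,0),m_2)$, I would argue in the same way with the model $T_1$. Proposition \ref{p_2} states that $m(1,\cdot)$ is continuous and strictly increasing on $[0,\infty)$ with $m(1,0)=m_0$ and $\lim_{a\to\infty}m(1,a)=m_2$, so the intermediate value theorem produces $a\in[0,\infty)$ with $m(1,a)=u^\star$. No step in the argument is genuinely hard: the only mildly subtle point is justifying the continuity of the maps $a\mapsto m(i,a)$ and ruling out jumps at the endpoints, and this is handled once and for all by the transversal vanishing of $\dot w_{i,a}$ at $b(i,a)$ when $\alpha>\bar\alpha$.
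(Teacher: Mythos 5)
Your argument is correct and is essentially the paper's own: the result is stated there as an immediate corollary of the continuity of $a\mapsto m(i,a)$ together with Propositions \ref{p_1} and \ref{p_2} (and $m(3,-\bar a)=1$ from Proposition \ref{prop_barasym}), exactly the IVT gluing at $m_2$ that you carry out. The only nitpick is that $m_2$ is a limit, not an attained value, of $m(3,\cdot)$ and $m(1,\cdot)$, but you already cover $u^\star=m_2$ with the model $i=2$, so nothing is missing.
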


In the next section we address the following question: is it possible that $u^\star<m(1,0)$? Using a volume comparison theorem, we will see that the answer is no. Thus there always exists a model function $w_{i,a}$ that fits perfectly the eigenfunction $u$.

\section{Maxima of eigenfunctions and volume comparison}\label{sec_max}
In order to prove that (if $\alpha >\bar \alpha$) $u^\star \geq m(1,0)$, we adapt a volume comparison technique which was introduced in \cite[Section 6]{new}. For the sake of completeness, in this section we carry out all the proofs in details, however the theorems proved here are very similar to the ones proved in \cite[Section 6]{svelto}. We start with the definition of the metric $dm$ obtained as the pull-back of the Riemannian volume.
\begin{definition}
 Given the eigenfunction $u$ and $w$ as in Theorem \ref{grad_est_neg}, let $t_0\in (a,b)$ be the unique zero of $w$ and let $g\equiv w^{-1}\circ u$. We define the measure $m$ on $[a,b]$ by
\begin{gather*}
 m(A)\equiv \V(g^{-1}(A)) \ ,
\end{gather*}
where $\V$ is the Riemannian measure on $M$. Equivalently, for any bounded measurable $f:[a,b]\to \R$, we have
\begin{gather*}
 \int_a^b f(s) dm(s) = \int_M f(g(x))d\V(x) \ . 
\end{gather*}

\end{definition}
\begin{theorem}\label{teo_comp}
 Let $u$ and $w$ be as above, and let
\begin{gather*}
 E(s)\equiv -\operatorname{exp}\ton{\lambda\int_{t_0}^s \frac{w^{(p-1)}}{\dot w^{(p-1)}} dt}\int_a ^s{w(r)}^{(p-1)} dm(r)
\end{gather*}
Then $E(s)$ is increasing on $(a,t_0]$ and decreasing on $[t_0,b)$.
\end{theorem}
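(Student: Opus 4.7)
The plan is to differentiate $E(s)$ directly and read the sign of $E'$ from the sign of $\pw(s)$. Writing $\Phi(s)=\exp\ton{\lambda\int_{t_0}^s \frac{\pw}{\pdw}\,dt}$ and $V(s)=\int_a^s \pw\,dm$ so that $E=-\Phi V$, a short computation gives
\begin{gather*}
E'(s)=-\Phi(s)\,\pw(s)\qua{\frac{\lambda V(s)}{\pdw(s)}+m'(s)}.
\end{gather*}
Since $\Phi>0$, since $\dot w>0$ on $(a,b)$, and since $\pw$ is negative on $(a,t_0)$ and positive on $(t_0,b)$, the claimed monotonicity reduces to showing that the bracket is nonnegative throughout $(a,b)$.

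The key step is to rewrite both terms in the bracket as integrals over the level set $\{u=w(s)\}$. By the definition of $m$, $V(s)=\int_{\{u\leq w(s)\}}\pu\,d\V$. Applying the divergence theorem on this sublevel set, together with the eigenvalue equation $\Delta_p u=-\lambda\pu$ and the Neumann condition $\ps{\nabla u}{\hat n}=0$ on $\partial M$ (which kills the contribution from $\partial M\cap\{u\leq w(s)\}$), gives
\begin{gather*}
V(s)=-\lambda^{-1}\int_{\{u=w(s)\}}\abs{\nabla u}^{p-1}\,d\sigma.
\end{gather*}
Simultaneously, from $m([a,s])=\V(\{u\leq w(s)\})$ and the coarea formula one reads
\begin{gather*}
m'(s)=\dot w(s)\int_{\{u=w(s)\}}\frac{1}{\abs{\nabla u}}\,d\sigma.
\end{gather*}
Substituting and combining the two fractions, the bracket in the formula for $E'$ becomes
\begin{gather*}
\frac{1}{\pdw(s)}\int_{\{u=w(s)\}}\frac{\dot w(s)^{p}-\abs{\nabla u}^{p}}{\abs{\nabla u}}\,d\sigma,
\end{gather*}
which is nonnegative because Theorem \ref{grad_est_neg} gives $\abs{\nabla u(x)}\leq \dot w(g(x))=\dot w(s)$ pointwise on $\{u=w(s)\}$.

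The main technical obstacle is to justify the divergence theorem and the coarea formula at level sets of $u$, since $u$ is only $C^{1,\alpha}$ globally and its level sets may be singular where $\nabla u=0$. My plan is to invoke Sard's theorem so that for almost every $s\in(a,b)$ the value $w(s)$ is a regular value of $u$; at such $s$ both identities hold classically and the sign inequality on $E'$ holds a.e. Since $E$ is continuous in $s$, a.e.\ sign control of $E'$ on each of $(a,t_0)$ and $(t_0,b)$ upgrades to the monotonicity of $E$ on the respective closed half-intervals. Absolute continuity of $m$ with the displayed density is itself a direct application of the coarea formula to the eigenfunction $u$.
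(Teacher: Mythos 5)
Your differentiation of $E$ and the level-set identities capture the right mechanism, and the pointwise use of Theorem \ref{grad_est_neg} on $\{u=w(s)\}$ is exactly where the gradient comparison should enter. But the final upgrade step has a genuine gap. First, Sard's theorem is not available here: $u$ is only $C^{1,\alpha}$ near its critical points (and classical Sard for a real-valued function on an $n$-manifold needs roughly $C^{n}$ regularity), so you cannot assert that a.e.\ level is regular. More seriously, the coarea formula does \emph{not} give absolute continuity of $m$: it only identifies the density of the part of the pushforward coming from $\{\nabla u\neq 0\}$, while $\V$ restricted to the critical set $\{\nabla u=0\}$ pushes forward to a measure that could a priori be singular (or even atomic, if a level set had positive volume). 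Without absolute continuity of $m$ (hence of $V$ and $E$), the implication ``$E$ continuous and $E'\geq 0$ a.e.\ $\Rightarrow$ $E$ increasing'' is simply false — Cantor-type functions are the standard counterexample — so the concluding sentence of your argument does not follow as written. A related, more minor point: since $\Delta_p u$ is only defined weakly and the sublevel set contains critical points, the identity $\lambda V(s)=-\int_{\{u=w(s)\}}\abs{\nabla u}^{p-1}d\sigma$ should be derived for a.e.\ $s$ by testing the weak equation against Lipschitz truncations of $u$, not by the classical divergence theorem.

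The gap is reparable along your lines: writing $dE=-\Phi\ton{\lambda\frac{\pw}{\pdw}V\,ds+\pw\,dm}$ as measures, the singular part of $dm$ enters multiplied by $-\Phi\,\pw$, which is $\geq 0$ on $(a,t_0)$ and $\leq 0$ on $(t_0,b)$ — i.e.\ it always has the favorable sign — so it suffices to control only the absolutely continuous part by your level-set computation; but this observation (or some substitute for absolute continuity) must be made explicitly. Note that the paper sidesteps the whole issue: it never differentiates $m$ or touches level sets, but instead integrates $\Delta_p(uK(u))$ over $M$ against suitably built test functions $H$, inserts the gradient comparison under the integral, and obtains the distributional inequality $\pdw\,dm-\lambda A\,ds\geq 0$ between measures, from which the monotonicity of $E$ follows directly and with no regularity caveats. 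Your route is more direct and more geometrically transparent, but it needs the measure-theoretic bookkeeping above to be complete.
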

Before the proof, we note that this theorem can be rewritten in a more convenient way. Consider in fact that by definition
\begin{gather*}
 \int_{a}^s \pw(r)\ dm(r)= \int_{\{u\leq w(s) \}}u(x)^{(p-1)}\ d\V(x) \ .
\end{gather*}
Moreover, note that the function $w$ satisfies
\begin{gather*}
 \frac d {dt}(\mu_i \pdw)=-\lambda \mu_i \pw \ , \\
 -\lambda\frac{\pw}{\pdw}=
\frac d {dt} \log(\mu_i \pdw) \ ,
\end{gather*}
and therefore
\begin{gather*}
 -\lambda\int_a ^s \pw(t)\mu_i (t) dt = \mu_i(s) \pdw (s) \ , \\
\operatorname{exp}\ton{\lambda \int_{t_0}^s \frac{\pw}{\pdw} dt }=\frac{\mu_i(t_0)\pdw(t_0)}{\mu_i(s) \pdw(s)} \ .
\end{gather*}
Thus, the function $E(s)$ can be rewritten as
\begin{gather*}
 E(s)=C\frac{\int_a ^s\pw(t) \ dm(r)}{\int_a ^s \pw(t)\ \mu_i(t) dt}=C\frac{\int_{\{u\leq w(s)\}}{u(x)}^{(p-1)}\ d\V(x)}{\int_a ^s {w(t)}^{(p-1)}\ \mu_i(t) dt} \ ,
\end{gather*}
where $\lambda C^{-1}=\mu_i(t_0)\pdw(t_0)$, and the previous theorem can be restated as follows.
\begin{theorem}\label{int_comp}
 Under the hypothesis of the previous theorem, the ratio
\begin{gather*}
 E(s)=\frac{\int_a ^s\pw(r) \ dm(r)}{\int_a ^s \pw(t)\ \mu_i(t) dt}=\frac{\int_{\{u\leq w(s)\}}{u(x)}^{(p-1)}\ d\V(x)}{\int_a ^s {w(t)}^{(p-1)}\mu_i(t) dt}
\end{gather*}
is increasing on $[a,t_0]$ and decreasing on $[t_0,b]$.
\end{theorem}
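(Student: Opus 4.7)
The plan is to prove the equivalent statement, Theorem \ref{int_comp}, by computing $E'(s)$ directly and identifying its sign via the gradient comparison Theorem \ref{grad_est_neg}. All computations will be carried out at values of $s$ for which $c=w(s)$ is a regular value of $u$; by Sard's theorem these are dense in $(a,b)$, and since $E$ is continuous, sign information for $E'$ on a dense set is enough to conclude monotonicity on the full intervals $[a,t_0]$ and $[t_0,b]$.

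First I would rewrite the two one-dimensional ingredients in closed form. Integrating the ODE $\tfrac{d}{dt}(\mu_i\pdw)=-\lambda\mu_i\pw$ from $a$ to $s$, using $\dot w(a)=0$, gives
\begin{gather*}
\int_a^s \pw(t)\,\mu_i(t)\,dt = -\frac{\mu_i(s)\,\pdw(s)}{\lambda}\, .
\end{gather*}
On the manifold side, the sublevel set $\Omega_s\equiv\{u\leq w(s)\}$ has reduced boundary $\{u=w(s)\}$ together with a portion of $\partial M$. Applying the divergence theorem to $\Delta_p u = -\lambda \pu$ and using the Neumann condition $\ps{\nabla u}{\hat n}=0$ on $\partial M$ yields
\begin{gather*}
-\lambda\int_{\Omega_s}\pu\,d\V = \int_{\{u=w(s)\}}\abs{\nabla u}^{p-1}\,d\sigma\, .
\end{gather*}
Finally, the coarea formula and the change of variables $g=w^{-1}\circ u$ give, for the density of $dm$,
\begin{gather*}
\frac{dm}{ds}(s) = \dot w(s)\int_{\{u=w(s)\}}\frac{1}{\abs{\nabla u}}\,d\sigma\, .
\end{gather*}

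Next I would differentiate the exponential form
\begin{gather*}
E(s) = -\exp\!\ton{\lambda\int_{t_0}^s\frac{\pw}{\pdw}dt}\int_a^s \pw(r)\,dm(r)
\end{gather*}
and substitute the three identities above. A direct computation collapses $E'(s)$ to
\begin{gather*}
E'(s) = -e^{F(s)}\,\pw(s)\,\qua{\frac{\lambda}{\pdw(s)}\int_a^s\pw\,dm + \frac{dm}{ds}(s)} = -e^{F(s)}\,\pw(s)\int_{\{u=w(s)\}}\frac{\dot w(s)^p - \abs{\nabla u}^p}{\dot w(s)^{p-1}\,\abs{\nabla u}}\,d\sigma\, ,
\end{gather*}
where $F(s)=\lambda\int_{t_0}^s \pw/\pdw\,dt$. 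The gradient comparison Theorem \ref{grad_est_neg} gives $\abs{\nabla u}\leq \dot w(w^{-1}(u))=\dot w(s)$ pointwise on the level set $\{u=w(s)\}$, so the integrand is nonnegative, and the whole integral is nonnegative. The sign of $E'(s)$ is then dictated by $-\pw(s)$: positive on $(a,t_0)$, where $w<0$, and negative on $(t_0,b)$, where $w>0$. This yields the claimed monotonicity.

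The main technical obstacle I anticipate is justifying the coarea/divergence computations when $s$ corresponds to a nonregular value of $u$, where $\{u=w(s)\}$ may fail to be a smooth hypersurface or may meet $\partial M$ nontrivially. I would handle this by restricting the pointwise identities to the full-measure set of regular values (Sard), where $u$ is smooth near the level set and the boundary of $\Omega_s$ is piecewise $C^1$, and then using that both the numerator and denominator of $E(s)$ are absolutely continuous in $s$ to upgrade the a.e.\ sign of $E'$ to monotonicity on all of $[a,t_0]$ and $[t_0,b]$. A secondary, milder point is that at the endpoints $a,b$ one has $\dot w=0$ and the expression $\pw/\pdw$ is singular; this is cosmetic and is avoided by working on compact subintervals of $(a,b)$ and passing to the limit, just as in the proof of Theorem \ref{grad_est_neg}.
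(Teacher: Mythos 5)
Your computation is algebraically correct and the outline is sound, but you have taken a genuinely different route from the paper, and the difference matters precisely at the points you defer to "technical obstacles." The paper never differentiates $E$ and never touches level sets: it tests the weak equation $\Delta_p u=-\lambda\pu$ against $uK(u)$ for a suitably built function $K$ (with $(tK(t))'=G(t)$ and $G(w(t))^{(p-1)}=\int_a^tH$), applies the gradient comparison inside the resulting integral identity, and lands directly on the distributional inequality $\pdw(s)\,dm(s)-\lambda A(s)\,ds\geq 0$, from which monotonicity of $E=Ae^{F}$ follows by multiplying by $\pw/\pdw$ and reading off signs. That argument requires no regularity of the level sets of $u$, no absolute continuity of $m$, and no coarea formula. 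Your approach is the "strong form" of the same inequality — indeed your bracket $\int_{\{u=w(s)\}}\frac{\dot w^p-\abs{\nabla u}^p}{\dot w^{p-1}\abs{\nabla u}}\,d\sigma\geq 0$ is exactly the density of the paper's measure inequality at regular values — and it is more transparent about where the gradient comparison enters, but it buys that transparency at the cost of regularity hypotheses the eigenfunction does not obviously satisfy.

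The concrete gaps: (i) Sard's theorem for a real-valued function on an $n$-manifold requires $C^n$ regularity near the critical set, whereas $u$ is only $C^{1,\alpha}$ there (it is smooth only where $\nabla u\neq 0$), so for $n\geq 2$ you cannot invoke Sard to conclude that regular values are dense, and the whole pointwise computation could a priori be vacuous. (ii) Even granting a.e.\ differentiability, "a.e.\ $E'\geq 0$ plus continuity" does not give monotonicity unless the numerator $\int_a^s\pw\,dm$ is absolutely continuous, i.e.\ unless $m$ has no singular part; you assert this but it amounts to the unproved claim that the critical set of $u$ is $\V$-negligible. (This particular gap is repairable: a singular part of $dm$ contributes $-e^{F}\pw\,dm_{\mathrm{sing}}$ to $dE$, which has the correct sign on each of $(a,t_0)$ and $(t_0,b)$, so the right fix is to phrase your computation as an inequality of measures rather than of pointwise derivatives — which is essentially what the paper's test-function argument does from the start.) (iii) The divergence identity $-\lambda\int_{\{u\leq w(s)\}}\pu\,d\V=\int_{\{u=w(s)\}}\abs{\nabla u}^{p-1}\,d\sigma$ also needs justification for the weak solution $u$; again this is handled in the paper by testing against globally defined functions of $u$ instead of characteristic functions of sublevel sets. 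If you want to keep your route, you must either establish that critical values of $u$ are negligible or reformulate steps (ii)--(iii) weakly; otherwise the cleanest repair is to adopt the paper's choice of test function.
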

\begin{proof}[\textsc{Proof of Theorem \ref{teo_comp}}]
 Chose any smooth nonnegative function $H(s)$ with compact support in $(a,b)$, and define $G:[-1,w(b)]\to \R$ in such a way that
\begin{gather*}
 \frac d {dt} \qua{{G(w(t))}^{(p-1)}} =H(t) \quad \ \ G(-1)=0 \ .
\end{gather*}
It follows that
\begin{gather*}
 \pG(w(t))= \int_a ^t H(s) ds \ \ \ \ (p-1) \abs{G(w(t))}^{p-2} \dot G(w(t)) \dot w (t) = H(t) \ .
\end{gather*}
Then choose a function $K$ such that $(t K(t))'=K(t)+t \dot K(t)=G(t)$. By the chain rule we obtain
\begin{gather*}
 \Delta_p (uK(u))=G^{(p-1)} (u) \Delta_p(u) + (p-1)\abs{G(u)}^{p-2} \dot G(u) \abs{\nabla u}^p \ .
\end{gather*}
Using the weak formulation of the divergence theorem, it is straightforward to verify that
\begin{gather*}
 \int_M \Delta_p(uK(u))d \V =0\, ,
\end{gather*}
and so we get
\begin{gather*}
\frac{\lambda  }{p-1} \int_M \pu \pG(u) d\V(x)= \int_M \abs{G(u)}^{p-2} \dot G (u) \abs{\nabla u}^{p} d\V \ .
\end{gather*}
Applying the gradient comparison Theorem \ref{grad_est_neg}, noting that we consider only $\lambda>0$, we have
\begin{gather*}
\frac{\lambda  }{p-1} \int_M \pu \pG (u)d\V(x)\leq \int_M \abs{G(u)}^{p-2} \dot G (u) (\dot w \circ w^{-1}(u)) ^p d\V \ .
\end{gather*}
By definition of $d m $, the last inequality can be written as
\begin{gather*}
\frac{\lambda  }{p-1} \int_a^b \pw(s) \pG(w(s)) dm(s)\leq \\
\leq\int_a^b \abs{G(w(s))}^{p-2} \dot G (w(s)) (\dot w(s)) ^p dm(s) \ ,
\end{gather*}
and recalling the definition of $G$ we deduce that
\begin{gather*}
 \lambda \int_a^b \pw(s) \ton{\int_a^s H(t) dt}dm(s)= \lambda \int_a^b  \ton{\int_s^b \pw(t) dm(t)} H(s)ds \leq \\
\leq \int_a^b H(s) \pdw(s)\ dm(s) \ .
\end{gather*}
Since $\int_a^b \pw(t) dm(t)=0$, we can rewrite the last inequality as
\begin{gather*}
 \int_a^b  H(s) \qua{-\lambda\int_a^s \pw(t) dm(t)} ds \leq \int_a^b H(s) \pdw(s)\ dm(s) \ .
\end{gather*}
Define the function $A(s)\equiv -\int_a^s \pw(r) dm(r)$. Since the last inequality is valid for all smooth nonnegative function $H$ with compact support, then
\begin{gather*}
\pdw(s) dm(s)-\lambda A(s)ds \geq 0 
\end{gather*}
in the sense of distributions, and therefore the left hand side is a positive measure. In other words, the measure $\lambda Ads+ \frac{\pdw}{\pw} dA$ is nonpositive. Of if we multiply the last inequality by $\frac{\pw}{\pdw}$, and recall that $w\geq 0$ on $[t_0,b)$ and $w\leq 0$ on $(a,t_0]$, we conclude that the measure 
\begin{gather*}
 \lambda  \frac{\pw}{\pdw}Ads+  dA
\end{gather*}
is nonnegative on $(a,t_0]$ and nonpositive on $[t_0,b)$, or equivalently the function
\begin{gather*}
 E(s)= A(s) \operatorname{exp}\ton{\lambda\int_{t_0}^s \frac{\pw}{\pdw}(r) dr}
\end{gather*}
is increasing on $(a,t_0]$ and decreasing on $[t_0,a)$.
\end{proof}

Before we state the comparison principle for maxima of eigenfunctions, we need the following lemma. The definitions are consistent with the ones in Theorem \ref{grad_est_neg}.
\begin{lemma}\label{lemma_radepsilon}
 For $\epsilon$ sufficiently small, the set $u^{-1}[-1,-1+\epsilon)$ contains a ball of radius $r=r_\epsilon$, which is determined by
\begin{gather*}
r_\epsilon=w ^{-1}(-1+\epsilon)-a \ .
\end{gather*}
\begin{proof}
 This is a simple application of the gradient comparison Theorem \ref{grad_est_neg}. Let $x_0$ be a minimum point of $u$, i.e. $u(x_0)=-1$, and let $\bar x$ be another point in the manifold. Let $\gamma:[0,l]\to M$ be a unit speed minimizing geodesic from $x_0$ to $\bar x$, and define $f(t)\equiv u(\gamma(t))$. It is easy to see that
\begin{gather}\label{ref_h}
\abs{\dot f (t)}=\abs{\ps{\nabla u |_{\gamma(t)}}{\dot \gamma(t)}}\leq \abs{\nabla u |_{\gamma(t)}}\leq \dot w |_{w^{-1}(f(t))}  \ .
\end{gather}
Since
\begin{gather*}
\frac d {dt} w^{-1}(f(t))\leq 1 \ ,
\end{gather*}
we have that $a\leq w^{-1}(f(t))\leq a+t$, and since $\dot w$ is increasing in a neighborhood of $a$, we can deduce that
\[\dot w |_{w^{-1}f(t)}  \leq \dot w |_{a+t} \ .\]
By the absolute continuity of $u$ and $\gamma$, we can conclude that
\begin{gather*}
 \abs{f(t)+1}\leq\int_0^t  \dot w |_{a+s} ds = (w(a+ t)+1) \ .
\end{gather*}
This means that if $l=d(x_0,\bar x)< w ^{-1}(-1+\epsilon)-a$, then $u(\bar x)< -1+\epsilon$.
\end{proof}

\end{lemma}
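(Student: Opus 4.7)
The plan is to transport the gradient comparison estimate along a geodesic from the minimum point of $u$. Fix a minimizing point $x_0\in M$ with $u(x_0)=-1$ and an arbitrary point $\bar x\in M$. Connect them by a unit-speed minimizing geodesic $\gamma\colon[0,l]\to M$, with $l=d(x_0,\bar x)$, and set $f(t)=u(\gamma(t))$. Since $\gamma$ has unit speed, $|\dot f(t)|\le|\nabla u(\gamma(t))|$. Applying Theorem \ref{grad_est_neg} gives
\begin{gather*}
|\dot f(t)|\le \dot w\bigl(w^{-1}(f(t))\bigr)\,.
\end{gather*}

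Next I would convert this into a scalar ODE comparison by looking at the auxiliary function $h(t)\equiv w^{-1}(f(t))$. Since $\dot w>0$ on $(a,b]$ (and $w(a)=-1=f(0)$ forces $h(0)=a$), one differentiates and uses the estimate above to see that $\dot h(t)\le 1$; hence $h(t)\le a+t$ for as long as $f$ stays in the range of $w|_{[a,b]}$. Because $w(a)=-1$ and $\dot w(a)=0$, the ODE in Definition \ref{deph_1dm} shows that $\dot w$ is increasing in a right-neighborhood of $a$, so on this initial interval we have the monotonicity
\begin{gather*}
\dot w\bigl(h(t)\bigr)\le \dot w(a+t)\,.
\end{gather*}
Substituting back and using absolute continuity of $f$ yields
\begin{gather*}
f(t)+1=|f(t)-f(0)|\le \int_0^t \dot w(a+s)\,ds = w(a+t)+1\,.
\end{gather*}

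From this the lemma follows immediately: if $l<w^{-1}(-1+\epsilon)-a$, then $a+l<w^{-1}(-1+\epsilon)$, and evaluating at $t=l$ one gets $u(\bar x)=f(l)\le w(a+l)<-1+\epsilon$. Since $\bar x$ was arbitrary in $B_{r_\epsilon}(x_0)$, this ball lies inside $u^{-1}[-1,-1+\epsilon)$.

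The main subtlety to be careful about is the initial monotonicity of $\dot w$ near $a$ and the fact that $h$ indeed stays in $[a,b]$ during the relevant integration. For $\epsilon$ small enough, $w^{-1}(-1+\epsilon)$ lies in the right-neighborhood of $a$ where $\dot w$ is known to be increasing (directly from the IVP together with the condition $\dot w(a)=0$), so both issues are handled by restricting $\epsilon$. The rest is essentially a one-dimensional Grönwall-type comparison, with no further conceptual obstacle.
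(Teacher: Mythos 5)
Your proof is correct and follows essentially the same route as the paper's: you apply the gradient comparison along a unit-speed minimizing geodesic from the minimum point, pass to $h(t)=w^{-1}(f(t))$ to get $\dot h\le 1$ and hence $h(t)\le a+t$, then use the monotonicity of $\dot w$ near $a$ and integrate to conclude $f(t)\le w(a+t)$. The only difference is cosmetic: you make the auxiliary function $h$ explicit where the paper leaves it implicit, and you spell out why $\dot w$ is increasing near $a$ from the IVP — both reasonable clarifications, not a different argument.
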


And now we are ready to prove the comparison theorem. 
\begin{theorem}
If $u$ is an eigenfunction on $M$ such that $\min\{u\}=-1=u(x_0)$ and $\max\{u\}\leq m(k,n,1,0)$, then for every $r>0$ sufficiently small, the volume of the ball centered at $x_0$ and of radius $r$ is controlled by
\begin{gather*}
 \V(B(x_0,r))\leq c r^n \ .
\end{gather*}

\end{theorem}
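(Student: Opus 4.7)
The plan is to exploit the hypothesis $\max\{u\}\leq m(k,n,1,0)$ by taking as comparison model the one-dimensional function $w=w_{k,n,1,0}$ on $[0,b]$ with $b=b(1,0)$, so that $[\min u,\max u]\subset [-1,w(b)]$ and both Theorem~\ref{grad_est_neg} (pointwise gradient control) and Theorem~\ref{int_comp} (the integrated ``$E$-comparison'') apply. Since $\mu_1(t)=\sinh^{n-1}(\sqrt{-k}\,t)$ vanishes like $t^{n-1}$ as $t\to 0$, the ``radial'' integral $\int_0^r\mu_1$ is $O(r^n)$, and this is exactly the growth rate we want to transfer to $\V(B(x_0,r))$.

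The first step is to upgrade Lemma~\ref{lemma_radepsilon} to the inclusion
\begin{gather*}
B(x_0,r)\subset \{u\leq w(r)\}\qquad\text{for every sufficiently small }r>0.
\end{gather*}
This is already contained in the proof of that lemma: along any unit-speed minimizing geodesic $\gamma$ starting at $x_0$, the gradient comparison gives $|\dot f|\leq \dot w\circ w^{-1}(f)$ for $f(t)=u(\gamma(t))$, hence $w^{-1}(f(t))\leq t$ (since $w^{-1}(f(0))=0$), and therefore $f(t)\leq w(t)$. Taking $\epsilon=w(r)+1$ in Lemma~\ref{lemma_radepsilon} is the same statement.

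The second step is to apply Theorem~\ref{int_comp} at $s=r$, which for $r\leq t_0$ lies in the increasing regime of $E$. Both integrals in the ratio are negative there (since $u<0$ on $\{u\leq w(r)\}$ and $w<0$ on $[0,t_0]$), so after taking absolute values the inequality $E(r)\leq E(t_0)$ reads
\begin{gather*}
\int_{\{u\leq w(r)\}}\abs{u}^{p-1}\,d\V\;\leq\; \frac{E(t_0)}{C}\int_0^r \abs{w(t)}^{p-1}\mu_1(t)\,dt.
\end{gather*}
On $\{u\leq w(r)\}$ one has $\abs{u}\geq \abs{w(r)}$, while on $[0,r]\subset[0,t_0]$ one has $\abs{w(t)}\leq 1$. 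Combining these with the inclusion from the first step gives
\begin{gather*}
\abs{w(r)}^{p-1}\,\V(B(x_0,r))\;\leq\; \frac{E(t_0)}{C}\int_0^r \mu_1(t)\,dt.
\end{gather*}
Since $\abs{w(r)}\to 1$ and $\mu_1(t)\leq C' t^{n-1}$ as $t\to 0$, the right-hand side is $O(r^n)$, yielding the desired bound $\V(B(x_0,r))\leq c r^n$.

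The only really delicate point I expect is the sign bookkeeping in Theorem~\ref{int_comp}: one must check that the constant $C$ and the value $E(t_0)$ are positive, which follows because $\lambda C^{-1}=\mu_1(t_0)\dot w^{(p-1)}(t_0)>0$ at the unique zero $t_0$ of $w$ (where $\dot w>0$), and because $E$ is nonnegative and increasing on $[0,t_0]$ with $E(0)=0$. The mild regularity issue at $a=0$ in the $T_1$-model (where $T_1$ is singular but $w$ is still $C^1$ on $[0,\infty)$ by Proposition~\ref{prop_exun2}) poses no real difficulty, as both the gradient comparison and the integral comparison were already established for this model.
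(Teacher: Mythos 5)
Your proposal is correct and follows essentially the same route as the paper: the inclusion $B(x_0,r)\subset\{u\leq w(r)\}$ from Lemma \ref{lemma_radepsilon}, the monotonicity of $E$ from Theorem \ref{int_comp} applied on the increasing regime $[0,t_0]$, and the asymptotics $\mu_1(t)=O(t^{n-1})$ near $t=0$. The only cosmetic difference is that you lower-bound $\abs{u}^{p-1}$ on the sublevel set by $\abs{w(r)}^{p-1}$ (which tends to $1$), whereas the paper fixes the level $-1+\epsilon$ below a definite threshold so that $\abs{u}^{p-1}\geq 1/2$; the sign bookkeeping you flag is handled correctly.
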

\begin{proof}
 For simplicity, fix $k,n$, $i=1$ and $a=0$, and denote $w=w_{k,n,1,0}$, $m=m(k,n,1,0)$. Define also the measure $d\nu$ by $d\nu(t)= \mu_1(t) dt$.\\
 For $k\leq -1/2^{p-1}$, applying Theorem \ref{int_comp} we can estimate
\begin{gather*}
 \V(\{u\leq k\})\leq -2 \int_{\{u\leq k\}}  u^{(p-1)} d\V \leq\\
\leq -2 C \int_{\{w\leq k\}} w^{(p-1)} d\nu\leq 2C\nu(\{w\leq k\}) \ .
\end{gather*}
If we set $k=-1+\epsilon$ for $\epsilon$ small enough, it follows from Lemma \ref{lemma_radepsilon} that there exist positive constants $C$ and $C'$ such that
\begin{gather*}
\V(B(x_0,r_\epsilon))\leq  \V(\{u\leq k\})\leq \\
\leq2C\nu(\{w\leq -1+\epsilon\})= 2C \nu([0,r_\epsilon])= C' \int_0^{r_\epsilon} \sinh^{n-1}(t) dt \leq 2 C' r_\epsilon ^n \ .
\end{gather*}
\end{proof}

As an immediate corollary, we get the following proposition, which answers to the question raised at the end of the previous section.
\begin{proposition}
 Let $u:M\to \R$ be an eigenfunction of the $p$-Laplacian, and suppose that $\alpha>\bar \alpha$. Then $\max\{u\}=u^\star\geq m(k,n,1,0)>0$.
\end{proposition}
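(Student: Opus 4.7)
The plan is to proceed by contradiction and invoke the preceding volume comparison theorem directly. Suppose that $u^\star < m(k,n,1,0)$. Since $M$ is compact, the minimum $-1$ of $u$ is attained at some point $x_0$, so the hypothesis $\min u = -1 = u(x_0)$ of the preceding theorem holds, as does $\max u \leq m(k,n,1,0)$. The one dimensional model $w = w_{k,n,1,0}$ has $\dot w \geq 0$ on $[0,b(1,0)]$ and $[-1,m(1,0)] = [-1,w(b(1,0))]$ contains the image of $u$, so Theorem~\ref{grad_est_neg} applies as required. Consequently the previous theorem yields a constant $c>0$, depending only on $\lambda$, $k$, $n$, $p$ (through the model $w_{k,n,1,0}$), such that $\V(B(x_0,r)) \leq c\, r^n$ for every sufficiently small $r>0$.

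To extract the contradiction, I would compare this with the universal short-distance Riemannian volume expansion
\begin{gather*}
\V(B(x_0,r)) = \omega_n r^n \bigl(1 + O(r^2)\bigr) \quad \text{as } r \to 0,
\end{gather*}
valid at every point of a smooth $n$-dimensional Riemannian manifold, where $\omega_n$ is the volume of the Euclidean unit ball. Any polynomial bound $\V(B(x_0,r)) \leq c\, r^n$ with $c < \omega_n$ is incompatible with this asymptotic, so the argument closes by verifying that the specific constant $c$ produced in the proof of the preceding theorem satisfies $c < \omega_n$. Unpacking that proof, this amounts to combining the structural factor $2C = 2\lambda/\bigl(\mu_1(t_0)\dot w^{(p-1)}(t_0)\bigr)$ with the leading-order Taylor expansion $\int_0^{r_\epsilon}\sinh^{n-1}(t)\,dt = r_\epsilon^n/n + O(r_\epsilon^{n+2})$ and checking that the resulting effective constant falls strictly below $\omega_n$.

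The strict positivity $m(k,n,1,0) > 0$ in the conclusion is not an independent matter: it was already recorded in Proposition~\ref{p_2}, where $m(1,0) = m_0 > 0$ was established under the standing hypothesis $\alpha > \bar\alpha$.

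The main obstacle I anticipate is not the contradiction scheme, which is a direct corollary of the theorem just proved, but the concrete verification $c < \omega_n$. This is precisely where the hypothesis $\alpha > \bar\alpha$ is essential: it forces $t_0>0$ strictly and $\dot w^{(p-1)}(t_0) > 0$, which in turn controls $c$ from above. All the remaining quantities — $\lambda$, $k$, $n$, $\mu_1(t_0)$, $\dot w^{(p-1)}(t_0)$ — are read off the one-dimensional IVP \eqref{eq_1dm_neg}, so once the numerical inequality is secured, the proposition follows.
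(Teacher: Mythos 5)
Your contradiction scheme has a genuine gap at exactly the point you flag as ``the main obstacle'': the verification $c<\omega_n$ cannot be carried out, and in fact it is false in general. The constant produced by Theorem \ref{int_comp} is $C=\lambda/\bigl(\mu_1(t_0)\dot w^{(p-1)}(t_0)\bigr)$ multiplied by the value of the monotone quantity $E$, whose numerator $\int_{\{u\leq w(s)\}}u^{(p-1)}\,d\V$ is built from the pushforward measure $dm$ and therefore scales linearly with $\Vol(M)$; nothing in the hypotheses bounds $\Vol(M)$, so $c$ can be arbitrarily large. A second, structural sign that the argument cannot close: your proposed contradiction never uses the \emph{strictness} of the assumed inequality $u^\star<m(k,n,1,0)$ — the volume comparison theorem only requires $u^\star\leq m(k,n,1,0)$, and that weak inequality is actually realized (in the limit) by the sharp warped-product examples, for which no contradiction can possibly arise. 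So a bound of the form $\V(B(x_0,r))\leq c\,r^{n}$ with the \emph{correct} exponent $n$ is perfectly compatible with $M$ being a smooth $n$-manifold, whatever the constant.

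The paper's proof supplies the missing idea: it exploits the strict inequality to perturb the \emph{dimension parameter} rather than the constant. If $u^\star<m(k,n,1,0)$, then by continuous dependence of the ODE \eqref{eq_1dm_neg} on its parameters there is a real $n'>n$ with $u^\star\leq m(k,n',1,0)$. By Remark \ref{rem_m_neg} (Corollary \ref{cor_n} holds for every real $n'\geq n$) the gradient comparison, and hence the volume comparison, remain valid with the model $w_{k,n',1,0}$, yielding $\V(B(x_0,r_\epsilon))\leq c\,r_\epsilon^{\,n'}$ for small $r_\epsilon$. Since $n'>n$, this forces $\V(B(x_0,r))=o(r^{n})$ as $r\to 0$, which contradicts the expansion $\V(B(x_0,r))=\omega_n r^n(1+O(r^2))$ \emph{regardless of the constant}. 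Your reduction to the volume comparison theorem and your observation that $m(k,n,1,0)>0$ follows from Proposition \ref{p_2} are both fine; what is missing is the dimension-perturbation step, without which the contradiction does not materialize.
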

\begin{proof}
 Suppose by contradiction that $\max\{u\}<m(k,n,i,0)$. Then, by the continuous dependence of solutions of ODE \eqref{eq_1dm_neg} on the parameters, there exists $n'>n$ ($n'\in \R$) such that $\max\{u\}\leq m(k,n',i,0)$. Note that, since Corollary \ref{cor_n} is valid for all $n'\geq n$, we can still apply the gradient comparison theorem to get
 \begin{gather*}
  \abs{\nabla u} (x)\leq \dot w |_{w^{-1}(u(x))}
 \end{gather*}
where $w=w_{k,n',1,0}$. Thus also the volume comparison remains valid, but this implies for small $\epsilon$ (which means for $r_\epsilon$ small)  $\V(B(x_0,r_\epsilon))\leq c r_\epsilon^{n'}$, which contradicts the assumption that $M$ is $n$ dimensional. Note that the argument applies even in the case where $M$ has a $C^2$ boundary.
\end{proof}

Finally, as a corollary of this Proposition and Proposition \ref{p3}, we get the following.
\begin{corollary}\label{cor_maxfit}
 Let $u:M\to \R$ be an eigenfunction of the $p$-Laplacian. Then there always exists $i\in \{1,2,3\}$ and $a\in I_i$ such that $u^\star=\max\{u\}=m(i,a)$. This means that there always exists a solution of equation \eqref{eq_1dm_neg} relative to the model $1,2$ or $3$ such that:
\begin{gather}
 u(M)= [-1,w(b)]\, .
\end{gather}
\end{corollary}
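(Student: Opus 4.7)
The plan is to prove Corollary \ref{cor_maxfit} by splitting into two cases based on whether $\alpha \leq \bar\alpha$ or $\alpha > \bar\alpha$, and in each case invoking the appropriate existence result for the one-dimensional models that was established in Sections \ref{sec_1drev} and \ref{sec_max}. The corollary itself is essentially a bookkeeping statement: we have normalized $u$ so that $-1 = \min\{u\}$ and $0 < u^\star \leq 1$, and we need to match the value $u^\star$ to the maximum $m(i,a)$ of some one-dimensional model.

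First I would handle the easy case $\alpha \leq \bar\alpha$. Here Proposition \ref{prop_kasmall} directly produces an $a \in [-\bar a,\infty)$ with $m(3,a) = u^\star$ for any $0 < u^\star \leq 1$, so we may take $i = 3$ and we are done. The only ingredient invoked is the continuity and monotonicity properties of $m(3,\cdot)$ in the subcritical regime together with the endpoint values $m(3,-\bar a) = 1$ and $\lim_{a \to a^\star} m(3,a) = 0$.

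The remaining case is $\alpha > \bar\alpha$. For the upper bound on $u^\star$ we use the normalization $u^\star \leq 1 = m(3,-\bar a)$, and for the lower bound we invoke the volume comparison Proposition proved just above the corollary, which asserts $u^\star \geq m(k,n,1,0) = m(1,0)$. Thus $u^\star \in [m(1,0), 1]$, which is exactly the range covered by Proposition \ref{p3}; that proposition in turn produces some $i \in \{1,2,3\}$ and $a \in I_i$ with $m(i,a) = u^\star$. The final claim $u(M) = [-1, w(b)]$ then follows from the fact that $u$ is continuous on the connected compact manifold $M$ (so $u(M)$ is a closed interval) together with $\min\{u\} = -1 = w(a)$ and $\max\{u\} = u^\star = m(i,a) = w(b(i,a))$.

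I do not expect any real obstacle here: all the substantive work (the gradient comparison in Theorem \ref{grad_est_neg}, the analysis of $m(i,a)$ in Propositions \ref{prop_kasmall}, \ref{p_1}, \ref{p_2}, \ref{p3}, and the volume comparison giving $u^\star \geq m(1,0)$) has already been carried out. The only minor point to be careful about is the dichotomy on $\alpha$, making sure that in the subcritical and critical cases we use model $3$ and in the supercritical case we allow models $1$, $2$, or $3$ according to Proposition \ref{p3}. This completes the proof of the corollary.
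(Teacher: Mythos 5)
Your proposal is correct and follows essentially the same route as the paper: the corollary is obtained by combining Proposition \ref{prop_kasmall} (for $\alpha\leq\bar\alpha$) with the volume-comparison lower bound $u^\star\geq m(1,0)$ and Proposition \ref{p3} (for $\alpha>\bar\alpha$). Your additional remark that $u(M)$ is a closed interval by continuity and connectedness is a harmless explicit filling-in of a step the paper leaves implicit.
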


\section{Diameter comparison}\label{s:diamter}
In this section we study the diameter $\delta(i,a)=b(i,a)-a$ as a function of $i$ and $a$, having fixed $n, \, k$ and $\lambda$. In particular, we are interested in characterizing the minimum possible value for the diameter.

\begin{definition}\label{deph_delta}
 For fixed $n, \, k$ and $\lambda$, define $\bar \delta$ by:
\begin{gather}
 \bar \delta(n,k,\lambda) =\bar \delta =\min \{\delta(i,a), \ \ i=1,2,3,\ a\in I_i\}
\end{gather}

\end{definition}

By an application of Jensen's inequality, we obtain a simple lower bound on $\delta(i,a)$ for $i=1,2$.
\begin{proposition}
 For $i=1,2$ and for any $a\in I_i$, $\delta(i,a)>\frac{\pi_p}{\alpha}$.
\end{proposition}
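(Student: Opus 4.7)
The plan is to change variables via the \pf transformation and then apply Jensen's inequality. We may assume $\delta(i,a)<\infty$, for otherwise the claim is vacuous; then on $[a,b]$ the function $\phi_{i,a}$ is strictly monotone from $-\pi_p/2$ to $\pi_p/2$, so
\begin{gather*}
\delta(i,a) = \int_{-\pi_p/2}^{\pi_p/2} \frac{d\phi}{\dot\phi(\phi)},
\end{gather*}
with $\dot\phi = \alpha - \frac{T_i(t(\phi))}{p-1}\cosp^{p-1}(\phi)\sinp(\phi)$ viewed as a function of $\phi$ along the trajectory. By Jensen's inequality applied to the convex function $x\mapsto 1/x$ with respect to the normalized measure $d\phi/\pi_p$,
\begin{gather*}
\delta(i,a) \geq \frac{\pi_p^2}{\int_{-\pi_p/2}^{\pi_p/2} \dot\phi\, d\phi},
\end{gather*}
with strict inequality unless $\dot\phi$ is constant in $\phi$. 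It therefore suffices to establish
\begin{gather*}
J \equiv \int_{-\pi_p/2}^{\pi_p/2} (-T_i)(t(\phi))\, \cosp^{p-1}(\phi)\sinp(\phi)\, d\phi \leq 0,
\end{gather*}
which is equivalent to $\int \dot\phi\, d\phi \leq \alpha\pi_p$.

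For $i=2$ the function $T_2$ is a nonzero constant, so $J$ is a constant multiple of $\int_{-\pi_p/2}^{\pi_p/2}\cosp^{p-1}(\phi)\sinp(\phi)\,d\phi$, which vanishes because $\sinp$ is odd and $\cosp$ is even on $[-\pi_p/2,\pi_p/2]$. Hence $\int\dot\phi\, d\phi = \alpha\pi_p$, and strict Jensen gives $\delta(2,a) > \pi_p/\alpha$ (since $T_2\neq 0$ makes $\dot\phi$ genuinely dependent on $\phi$).

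For $i=1$ the essential ingredient is that $|T_1|(t) = (n-1)\sqrt{-k}\coth(\sqrt{-k}t)$ is strictly decreasing in $t$. Substituting $\phi \mapsto -\phi$ on $[-\pi_p/2,0]$ and exploiting the parities of $\sinp$ and $\cosp$, one rewrites
\begin{gather*}
J = \int_0^{\pi_p/2} \big[(-T_1)(t(\phi)) - (-T_1)(t(-\phi))\big]\,\cosp^{p-1}(\phi)\sinp(\phi)\, d\phi.
\end{gather*}
On $(0,\pi_p/2)$ the factor $\cosp^{p-1}(\phi)\sinp(\phi)$ is strictly positive, while the monotonicity of $\phi(t)$ gives $t(\phi)>t(-\phi)$, and the monotonicity of $-T_1$ in $t$ then makes the bracket strictly negative. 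Hence $J<0$, so $\int\dot\phi\,d\phi < \alpha\pi_p$ and thus $\delta(1,a) > \pi_p/\alpha$. The principal obstacle is this symmetrization-plus-monotonicity step for $i=1$; the case $i=2$ reduces to a direct odd-symmetry argument on the symmetric interval.
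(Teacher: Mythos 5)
Your argument is correct, and for $i=2$ it coincides with the paper's proof: there the equation is autonomous, $\dot\phi$ is an explicit function of $\phi$ alone, $\int_{-\pi_p/2}^{\pi_p/2}\cosp^{p-1}(\phi)\sinp(\phi)\,d\phi=0$ by oddness, and strict Jensen does the rest. For $i=1$ your route genuinely differs from the paper's. The paper freezes the coefficient at the time $t_0$ where $\phi=0$, uses $\dot T_1>0$ and the sign of $\cosp^{p-1}(\phi)\sinp(\phi)$ to get the pointwise bound $\dot\phi_{1,a}\leq \alpha-\frac{T_1(t_0)}{p-1}\cosp^{p-1}(\phi)\sinp(\phi)$, invokes an ODE comparison to conclude $b(1,a)\geq c(a)$ for the autonomous problem, and then applies Jensen to that autonomous problem. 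You instead apply Jensen directly to the non-autonomous trajectory and control the correction term $J$ by symmetrizing about $t_0$ and using that $-T_1$ is strictly decreasing; this even yields the strict sign $J<0$ rather than routing strictness through Jensen. Both arguments hinge on the same monotonicity of $T_1$, but yours avoids the auxiliary comparison solution, at the cost of two points you should make explicit: (i) the change of variables $t\mapsto\phi$ requires $\dot\phi>0$ on all of $[a,b]$, which follows because $\dot\phi(a)>0$ and at any zero of $\dot\phi$ one has $\ddot\phi=-\alpha\dot T_1/T_1>0$, so $\dot\phi$ cannot cross zero from above (for $i=2$ the autonomous structure gives the same conclusion); and (ii) for $a=0$ the coefficient $T_1$ blows up at $t=0$, so one should check that $(-T_1)(t(-\phi))\cosp^{p-1}(\phi)\sinp(\phi)$ stays bounded as $\phi\to\pi_p/2$ (it does, since $\cosp^{p-1}(\phi)$ vanishes linearly in $t(-\phi)$ there), so that $J$ is a finite negative number. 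With these two remarks added, the proof is complete.
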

\begin{proof}
We can rephrase the estimate in the following way: consider the solution $\phi(i,a)$ of the initial value problem
\begin{gather*}
\begin{cases}
  \dot \phi = \alpha - \frac{T_i}{(p-1)}\cosp^{p-1} (\phi)\sinp(\phi)\\
\phi(a)=-\frac{\pi_p}{2} 
\end{cases}
\end{gather*}
Then $b(i,a)$ is the first value $b>a$ such that $\phi(i,b)=\frac{\pi_p}2$, and $\delta(i,a)=b(i,a)-a$.

We start by studying the translation invariant model $T_2$. In this case, using separation of variables, we can find the solution $\phi(2,0)$ in an implicit form. Indeed, if $\alpha\leq \bar \alpha$, then we have already shown in Proposition \ref{prop_alpha} that $\delta(i,a)=\infty$. If $\alpha>\bar \alpha$, we have $\dot \phi >0$ and:
\begin{gather*}
 \delta(2,0)=b(2,0)-0=\int_{-\frac {\pi_p} 2}^{\frac {\pi_p} 2} \frac{d\psi}{\alpha + \gamma\cosp^{p-1} (\psi)\sinp(\psi)} \, ,
\end{gather*}
where $\gamma=-\frac{T_2}{p-1}$ is a nonzero constant. Since $\cosp^{p-1}(\psi)\sinp(\psi)$ is an odd function, by Jensen's inequality we can estimate
\begin{gather}\label{eq_jan_neg}
\frac{\delta(2,0)}{\pi_p}= \frac 1 {\pi_p} \int_{-\pi_p/2}^{\pi_p/2} \frac{d\psi}{\alpha+\gamma \cosp^{p-1} (\psi)\sinp(\psi)}>\\
>\qua{\frac 1 {\pi_p}\int_{-\pi_p/2}^{\pi_p/2} \ton{\alpha+\gamma \cosp^{p-1} (\psi)\sinp(\psi)}d\psi}^{-1} = \frac{1}{\alpha} \ . 
\end{gather}
Note that, since $T_2\neq 0$, this inequality is strict.

If $i=1$ and $\alpha\leq \bar \alpha$, we still have $\delta(1,a)=\infty$ $\forall a\geq 0$. On the other hand, if $\alpha >\bar \alpha$ we can use the fact that $\dot T_1>0$ to compare the solution $\phi(1,a)$ with a function easier to study. Let $t_0$ be the only value of time for which $\phi(1,a)(t_0)=0$. Then it is easily seen that:
\begin{gather}
 \dot \phi(1,a)= \alpha - \frac{T_1}{p-1} \cosp^{(p-1)}(\phi)\sinp(\phi) \leq \alpha-\frac{T_1(t_0)}{p-1} \cosp^{(p-1)}(\phi)\sinp(\phi)\, .
\end{gather}
Define $\gamma=\frac{T_1(t_0)}{p-1}$. Using a standard comparison theorem for ODE, we know that, for $t>a$
\begin{gather}
 \phi(1,a)(t)< \psi(t)\, ,
\end{gather}
where $\psi$ is the solution to the IVP
\begin{gather}
 \begin{cases}
  \dot \psi = \alpha-\gamma \cosp^{(p-1)}(\psi)\sinp(\psi)\\
  \psi(a)=-\frac{\pi_p}2
 \end{cases}\, .
\end{gather}
If we define $c(a)$ to be the first value of time $c>a$ such that $\psi(c)=\pi_p/2$, then we have $b(2,a)\geq c(a)$. Using separation of variables and Jensen's inequality as above, it is easy to conclude that:
\begin{gather}
 \delta(1,a)>c(a)-a>\frac{\pi_p}{\alpha}\, .
\end{gather}
\end{proof}

\begin{remark}
 \rm For the odd solution $\phi_{3,-\bar a}$, it is easy to see that $\dot \phi \geq \alpha$ on $[-\bar a,\bar a]$ with strict inequality on $(-\bar a, 0)\cup(0,\bar a)$. For this reason:
 \begin{gather*}
  \delta(3,-\bar a)<\frac{\pi_p}{\alpha}\, ,
 \end{gather*}
and so $\bar \delta$ is attained for $i=3$.
\end{remark}

In the following proposition we prove that $\bar \delta = \delta(3,-\bar a)$, and for all $a\neq \bar a$ the strict inequality $\delta(3,a)>\bar \delta$ holds.

\begin{proposition}
 For all $a\in I_3=\R$:
 \begin{gather*}
  \delta(3,a)\geq \delta (3,-\bar a) = 2\bar a = \bar \delta\, ,
 \end{gather*}
with strict inequality if $a\neq -\bar a$.
\end{proposition}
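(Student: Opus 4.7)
The plan is to exploit the odd symmetry of $T_3$ and reduce the problem to an analysis of a single critical point of $\delta(3,\cdot)$. Since $T_3$ is odd, the substitution $\tilde\phi(t):=-\phi_{3,a}(-t)$ is another solution of the Pr\"ufer equation; by uniqueness it coincides with $\phi_{3,-b(3,a)}$. Consequently the map $\iota(a):=-b(3,a)$ is an involution with $\delta(3,\iota(a))=\delta(3,a)$, whose unique fixed point is precisely $a=-\bar a$ (it forces oddness, hence matches the construction of Proposition~\ref{prop_barasym}). It therefore suffices to prove $\delta(3,a)>2\bar a$ for every $a>-\bar a$; the case $a<-\bar a$ then follows by applying $\iota$.

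Fix $a>-\bar a$. Since the Pr\"ufer ODE is first order with locally Lipschitz right-hand side, two solutions of it cannot cross: at $t=a$ we have $\phi_{3,a}(a)=-\pi_p/2<\phi_{3,-\bar a}(a)$ (using strict monotonicity of $\phi_{3,-\bar a}$ on $[-\bar a,\bar a]$), and hence $\phi_{3,a}<\phi_{3,-\bar a}$ on the common domain. In particular $\phi_{3,a}(\bar a)<\pi_p/2$, giving the non-sharp bound $b(3,a)>\bar a$ and thus $\delta(3,a)>\bar a-a$. This is not yet enough; to get $\geq 2\bar a$ one has to quantify how much farther $\phi_{3,a}$ lags behind.

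For the sharp statement I would compute $\delta'(a)$ from the variational equation. Writing $G(\phi)=\cosp^{p-1}(\phi)\sinp(\phi)$ and noting $G'(\phi)=2\cosp^p(\phi)-1$, the initial condition $\phi_{3,a}(a)=-\pi_p/2$ gives $\partial_a\phi|_{t=a}=-\alpha$, and the linearized ODE integrates explicitly, yielding, together with $\dot\phi_{3,a}(b(a))=\alpha$,
\begin{gather*}
\delta'(a)\;=\;\exp\!\left(-\int_a^{b(a)}\frac{T_3(s)}{p-1}G'(\phi_{3,a}(s))\,ds\right)-1.
\end{gather*}
Using the Pr\"ufer energy relation $\dot e/e=T_3\cosp^p(\phi)/(p-1)$, whose integral over $[a,b(a)]$ is $\log m(3,a)$, together with $T_3=-\dot\mu_3/\mu_3$, whose integral is $\log(\mu_3(a)/\mu_3(b(a)))$, the exponent reduces to $\log\!\bigl(m(3,a)^2[\mu_3(b(a))/\mu_3(a)]^{1/(p-1)}\bigr)$, and the formula becomes
\begin{gather*}
\delta'(a)\;=\;\frac{\mu_3(a)^{1/(p-1)}}{m(3,a)^2\,\mu_3(b(a))^{1/(p-1)}}-1.
\end{gather*}
Since $\mu_3$ is even and $m(3,-\bar a)=1$, $b(3,-\bar a)=\bar a$, this gives $\delta'(-\bar a)=0$, so $-\bar a$ is a critical point.

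To conclude global minimality, combine criticality with the asymptotic behavior $\delta(3,a)\to\delta(2,0)$ as $a\to+\infty$ from Proposition~\ref{prop_alpha}, the analogous limit at $-\infty$ furnished by the involution $\iota$, and the bound $\delta(2,0)>\pi_p/\alpha>2\bar a$ (Jensen plus the remark after Proposition~\ref{prop_barasym}); these force $\delta(3,\cdot)$ to attain a finite global minimum. The main obstacle is then to rule out spurious critical points, i.e., to show strictly that $K(a):=m(3,a)^{2(p-1)}\mu_3(b(a))/\mu_3(a)<1$ for $a>-\bar a$, so that $\delta'(a)>0$ there. I would approach this by differentiating $K$, using the strict monotonicity $m'(3,\cdot)<0$ from Proposition~\ref{p_1} together with the strict log-convexity and evenness of $\mu_3(t)=\cosh^{n-1}(\sqrt{-k}\,t)$; the hard technical step is verifying that the resulting logarithmic derivative $K'/K=2(p-1)m'/m-T_3(b(a))b'(a)+T_3(a)$ has a definite sign for $a>-\bar a$, which identifies $-\bar a$ as the only critical point and therefore the unique global minimum of $\delta(3,\cdot)$.
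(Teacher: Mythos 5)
Your reduction via the involution $\iota(a)=-b(3,a)$ is sound (and is the same symmetry the paper exploits), and your variational-equation formula for $\delta'(a)$ is structurally correct up to a computational slip: $\frac{d}{d\phi}\qua{\cosp^{p-1}(\phi)\sinp(\phi)}=p\cosp^{p}(\phi)-(p-1)$, not $2\cosp^{p}(\phi)-1$ (these agree only for $p=2$), so your closed form should read $\delta'(a)=\mu_3(a)\,m(3,a)^{-p}\,\mu_3(b(a))^{-1}-1$. The genuine gap is that the proof stops exactly where the proposition's content begins. Establishing that $-\bar a$ is a critical point and that a global minimum exists does not locate the minimum at $-\bar a$; for that you must show $\delta'(a)>0$ for all $a>-\bar a$, i.e.\ $K(a)<1$, and you explicitly defer this as ``the hard technical step'' with only a sketch of a plan. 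That plan is not obviously closable: in $K'/K$ the terms $m'/m$, $-T_3(b)b'(a)$ and $T_3(a)$ have competing signs (note $T_3(a)$ changes sign at $a=0$, and $b'(a)$ is itself expressed through $K$, the quantity you are trying to bound), and you provide no mechanism to resolve the competition. As written, the argument proves only the non-sharp bound $\delta(3,a)>\bar a - a$ and the criticality of $-\bar a$.

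For comparison, the paper avoids differentiating in $a$ altogether. It sets $\psi_+=\phi_{3,a}$, $\psi_-(t)=-\psi_+(-t)=\phi_{3,-b(3,a)}$ and $\varphi=\phi_{3,-\bar a}$, passes to the inverse functions $g=\psi_+^{-1}$, $h=\psi_-^{-1}$, $s=\varphi^{-1}$, and studies the odd function $m(\phi)=\frac12\ton{h(\phi)+g(\phi)}$, whose value at $\pi_p/2$ is $\frac12\delta(3,a)$. The key point is that for $\beta\geq0$ the map $t\mapsto\ton{\alpha-\beta T_3(t)}^{-1}$ is convex on $t\geq 0$, which (together with evenness of $T_3$) yields the differential inequality $\frac{dm}{d\phi}\geq\ton{\alpha-T_3(m)f(\phi)}^{-1}$, strict when $g\neq h$; ODE comparison with $s$ then gives $m(\pi_p/2)>s(\pi_p/2)=\bar a$ directly. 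If you wish to salvage your route, this convexity of $T_3$-dependent quantities is almost certainly the missing ingredient needed to prove $K(a)<1$; without some such input your argument does not go through.
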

\begin{proof}
 The proof is based on the symmetries and the convexity properties of the function $T_3$. Fix any $a>-\bar a$ (with an analogue argument it is possible to deal with the case $a<-\bar a$), and set
 \begin{gather*}
 \psi_+(t) = \phi_{3,a}(t) \, ,\quad \quad  \varphi (t)= \phi_{3,-\bar a}(t) \, ,\quad \quad \psi_-(t) = -\psi_+(-t)\, .
 \end{gather*}
We study these functions only when their range is in $[-\pi_p/2,\pi_p/2]$, and since we can assume that $b(3,a)<\infty$, we know that $\dot \psi_{\pm} >0$ on this set (see the proof of Proposition \ref{prop_alpha}). Using the symmetries of the IVP \eqref{eq_pf_neg}, it is easily seen that the function $\varphi$ is an odd function and that $\psi_-$ is still a solution to \eqref{eq_pf_neg}. In particular:
\begin{gather*}
 \psi_- (t) = \phi_{3,-b(3,a)}(t)\, .
\end{gather*}
Note that by comparison, we always have $\psi_-(t) > \varphi(t) > \psi_+(t)$.

Since all functions have positive derivative, we can study their inverses:
\begin{gather*}
 h= \psi_-^{-1}\, , \quad \quad s = \varphi^{-1}\, , \quad \quad g = \psi_+ ^{-1}\, .
\end{gather*}

Set for simplicity
\begin{gather*}
 f(\phi)\equiv \frac 1 {p-1} \cosp^{(p-1)}(\phi)\sinp(\phi)
\end{gather*}
and note that on $[-\pi_p/2,\pi_p/2]$, $f(\phi)$ is odd and has the same sign as $\phi$. 
The function defined by:
\begin{gather*}
 m(\phi) = \frac{1}{2} \ton{h(\phi) + g(\phi)}
\end{gather*}
is an odd function such that $m(0)=0$ and 
\begin{gather*}
 m(\pi_p/2) = \frac{1}{2} \ton{h(\pi_p/2)+g(\pi_p/2)} = \frac {1}{2} \ton{b(3,a) -a  } = \frac {1}{2} \delta (3,a)\, ,
\end{gather*}
thus the claim of the proposition is equivalent to $m(\pi_p/2) >\bar a$.

By symmetry, we restrict our study to the set $\phi \geq 0$, or equivalently $m \geq 0$. Note that $m$ satisfies the following ODE:
\begin{gather*}
 2 \frac{dm}{d\phi} =  \frac{1}{\alpha -  T_3(g) f(\phi)} +  \frac{1}{\alpha -  T_3(h) f(\phi)} \, .
\end{gather*}
Fix some $\alpha,\beta\in \R^+ $ and consider the function:
\begin{gather}
 z(t) = \frac{1}{\alpha - \beta T_3(t)}\, .
\end{gather}
Its second derivative is:
\begin{gather*}
 \ddot z = \frac{2\beta^2 \dot T_3^2}{(\alpha- \beta T_3)^3 } + \frac{\beta \ddot T_3}{(\alpha- \beta T_3)^2}\, .
\end{gather*}
So, if $\beta\geq 0$ and $t\geq 0$, $z$ is a convex function. In particular this implies that:
\begin{gather}\label{eq_conv}
 \frac{dm}{d\phi} \geq \frac{1}{\alpha - T_3(m) f(\phi)} 
\end{gather}
for all those values of $\phi$ such that both $g$ and $h$ are nonnegative. However, by symmetry, it is easily seen that this inequality holds also when one of the two is negative. Indeed, if $h<0$, we have that:
\begin{gather*}
 \frac 1 2 \qua{\frac{1}{\alpha-\beta T_3(h)} + \frac{1}{\alpha-\beta T_3(-h)}} \geq \frac 1 \alpha  = \frac{1}{\alpha-\beta T_3\ton{\frac{h-h}{2}}}\, .
\end{gather*}
Recall that if for for some $-t<t<c$, $f(0)\leq [f(-t)+f(t)]/2$ and $f$ is convex on $[0,c]$, then $f((-t+c)/2) \leq [f(-t)+f(c)]/2$, so inequality \eqref{eq_conv} follows. Moreover, note that if $\beta > 0$ (i.e. if $\phi \in (0,\pi_p)$) and if $g\neq h$, the inequality is strict.

Using a standard comparison for ODE, we conclude that $m(\phi)\geq s(\phi)$ on $[0,\pi_p/2]$ and in particular:
\begin{gather}
m\ton{\pi_p/2} > s\ton{\pi_p/2} =\bar a\, ,
\end{gather}
and the claim follows immediately.
\end{proof}

In the last part of this section, we study $\bar \delta$ as a function of $\lambda$, having fixed $n$ and $k$. Given the previous proposition, it is easily seen that $\bar \delta(\lambda)$ is a strictly decreasing function, and so invertible. In particular, we can define its inverse $\bar \lambda (\delta)$, and characterize it in the following equivalent ways (see also Definition \ref{deph_l} and Remark \ref{rem_l}).
\begin{proposition}\label{prop_barlambda}
 For fixed $n$, $k$ and $p>1$, we have that given $\delta>0$ that $\bar \lambda(n,k,\delta)$ is the first positive Neumann eigenvalue on $[-\delta/2,\delta/2]$ relative to the operator
 \begin{gather*}
  \frac{d}{dt}\ton{\dot w^{(p-1)}} + (n-1)\sqrt{-k}\tanh\ton{\sqrt{-k} t} \dot w ^{(p-1)}+\lambda w^{(p-1)}\, ,
 \end{gather*}
or equivalently $\bar \lambda$ is the unique value of $\lambda$ such that the solution to
\begin{gather*}
 \begin{cases}
  \dot \phi = \ton{\frac{\lambda}{p-1}}^{1/p} + \frac{(n-1)\sqrt{-k}}{p-1}\tanh\ton{\sqrt{-k} t}  \cosp^{(p-1)}(\phi) \sinp(\phi)\\
  \phi(0)=0
 \end{cases}
\end{gather*}
satisfies $\phi(\delta/2) = \pi_p/2$.
\end{proposition}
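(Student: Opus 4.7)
The plan is to build the first Neumann eigenfunction explicitly as a restriction of the odd solution $w_{3,-\bar a}$ from Proposition \ref{prop_barasym}, and then to translate the statement into the Prüfer language of Definition \ref{deph_pf}. Fix $\lambda>0$ and let $\bar a=\bar a(\lambda)$. Since $w_{3,-\bar a}$ is odd on $[-\bar a,\bar a]$, the derivative $\dot w_{3,-\bar a}$ is even, so $\dot w(\bar a)=\dot w(-\bar a)=0$. Hence $w_{3,-\bar a}|_{[-\bar a,\bar a]}$ is a nontrivial Neumann eigenfunction of the operator in Definition \ref{deph_l} with eigenvalue $\lambda$ on an interval of length $\bar\delta(\lambda)=2\bar a(\lambda)$. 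Inverting the relation, if $\delta=\bar\delta(\lambda)$ (i.e.\ $\lambda=\bar\lambda(\delta)$), then $w_{3,-\delta/2}|_{[-\delta/2,\delta/2]}$ realizes a Neumann eigenfunction corresponding to $\bar\lambda(\delta)$.

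Next I would verify that this is the \emph{first} positive Neumann eigenvalue by a Sturm-oscillation argument in the Prüfer variables. A Neumann condition $\dot w(t_*)=0$ corresponds to $\phi(t_*)\in \tfrac{\pi_p}{2}+\pi_p\mathbb{Z}$, so the $k$-th positive Neumann eigenfunction is characterized by $\phi(\delta/2)-\phi(-\delta/2)=k\pi_p$. For $w_{3,-\delta/2}$ one has $\dot\phi>0$ throughout (as shown in Proposition \ref{prop_barasym} and in Case 3 of the proof of Proposition \ref{prop_alpha}), and $\phi$ travels exactly once from $-\pi_p/2$ to $\pi_p/2$, which is the $k=1$ case, identifying $\bar\lambda(\delta)$ with the first positive Neumann eigenvalue.

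For the equivalent Prüfer IVP characterization, I would exploit the oddness of $T_3$: the IVP
\[
\dot\phi = \ton{\frac{\lambda}{p-1}}^{1/p} - \frac{T_3(t)}{p-1}\cosp^{(p-1)}(\phi)\sinp(\phi),\qquad \phi(0)=0,
\]
is invariant under the involution $(t,\phi)\mapsto(-t,-\phi)$, since both $T_3$ and $\cosp^{(p-1)}\sinp$ are odd. Uniqueness of solutions (Propositions \ref{prop_exun1}--\ref{prop_exun2}) then forces $\phi$ itself to be odd, so $\phi(\delta/2)=\pi_p/2$ if and only if $\phi(-\delta/2)=-\pi_p/2$; this is precisely the Neumann condition at $-\delta/2$ for the associated $w$, which by the first step identifies the corresponding $\lambda$ with $\bar\lambda(\delta)$. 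Uniqueness of such $\lambda$ follows from the strict monotonicity of $\bar\delta(\lambda)$ already noted at the start of this portion of the section, which is what guarantees that the inverse $\bar\lambda(\delta)$ is well-defined.

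The only step that requires genuine care will be the Sturm-oscillation identification in the verification that we have produced the \emph{first} positive eigenvalue, namely the bijection between positive Neumann eigenvalues and winding numbers of the Prüfer phase. This is standard but slightly delicate in the $p$-Laplace setting because of the nonlinearity; however, the nonlinear Prüfer formalism of Section \ref{sec_1d_neg} together with the monotonicity $\dot\phi>0$ established on the oscillatory regime supplies everything one needs, so no ingredient beyond what has already been developed in the paper enters.
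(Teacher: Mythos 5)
Your proof is correct and takes essentially the same route as the paper, which states Proposition \ref{prop_barlambda} without a separate argument precisely because it is meant to follow from Proposition \ref{prop_barasym} (the odd model $w_{3,-\bar a}$ giving the Neumann eigenfunction on $[-\bar\delta/2,\bar\delta/2]$), the oddness of the Pr\"ufer phase with $\phi(0)=0$, and the strict monotonicity of $\bar\delta(\lambda)$ — exactly the ingredients you assemble. The one piece you add explicitly, the half-linear Sturm oscillation fact that a Neumann eigenfunction whose phase makes exactly one half-turn realizes the \emph{first} positive eigenvalue, is indeed standard (it can be obtained from comparison in $\lambda$ of the Pr\"ufer ODE, since at $\phi\equiv-\pi_p/2 \ (\operatorname{mod}\ \pi_p)$ one has $\dot\phi=\alpha>0$), so your treatment is consistent with, and slightly more detailed than, the paper's.
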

\begin{remark}
 \rm It is easily seen that the function $\bar \lambda (n,k,\delta)$ is a continuous function with respect to its {pa\-ra\-me\-ters}, moreover it has the following monotonicity properties:
 \begin{gather*}
   \delta_1 \leq \delta_2 \ \ \ \text{and} \ \ \ n_1\geq n_2 \ \ \ \text{and} \ \ \ k_1\geq k_2 \Longrightarrow \bar \lambda (n_1,k_1,\delta_1)\geq \bar \lambda (n_2,k_2,\delta_2)\, .
 \end{gather*}

\end{remark}

\section{Sharp estimate}\label{sec_main}
Now we are ready to state and prove the main Theorem on the spectral gap.
\begin{theorem}\label{teo_main_proof}
 Let $M$ be a compact $n$-dimensional Riemannian manifold with Ricci curvature bounded from below by $(n-1)k<0$, diameter $d$ and possibly with convex $C^2$ boundary. Let $\lambda_{1,p}$ be the first positive eigenvalue of the $p$-Laplacian (with Neumann boundary condition if necessary). Then 
\begin{gather*}
 \lambda_{1,p}\geq \bar \lambda(n,k,d) \ ,
\end{gather*}
where $\bar \lambda$ is the function defined in Proposition \ref{prop_barlambda}.
\end{theorem}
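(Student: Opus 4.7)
The plan is to assemble the three main ingredients developed in the previous sections into a short geodesic/monotonicity argument. Let $u$ be a first eigenfunction, rescaled so that $\min u = -1$ and $u^\star = \max u \in (0,1]$, and set $\alpha = (\lambda_{1,p}/(p-1))^{1/p}$. The strategy is: (i) select a one-dimensional model $w = w_{k,n,i,a}$ whose range matches that of $u$, (ii) use the gradient comparison theorem along a minimizing geodesic between the minimum and maximum of $u$ to relate the diameter of $M$ to the diameter $\delta(i,a)$ of the model, and (iii) invert the diameter-versus-eigenvalue relation established in Section \ref{s:diamter} to produce the sharp bound.

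First I would dispose of the trivial case $\alpha \leq \bar \alpha$: in this regime Proposition \ref{prop_kasmall} already gives a model $w_{3,a}$ with $m(3,a) = u^\star$, while in the case $\alpha > \bar \alpha$, Corollary \ref{cor_maxfit} (which rests on the volume comparison of Section \ref{sec_max}) yields indices $(i,a)$ with $m(i,a) = u^\star$. In either case we obtain $w$ such that $u(M) = [-1, w(b(i,a))]$, so the hypothesis of Theorem \ref{grad_est_neg} is satisfied and
\begin{gather*}
\abs{\nabla u}(x) \leq \dot w\bigl(w^{-1}(u(x))\bigr) \qquad \text{for every } x\in M.
\end{gather*}

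Next, let $x_-$ and $x_+$ be points where $u$ attains $-1$ and $u^\star$ respectively, and let $\gamma : [0,L] \to M$ be a unit-speed minimizing geodesic joining them, so $L \leq d$. Set $f(t) = u(\gamma(t))$ and $g(t) = w^{-1}(f(t)) \in [a,b]$. Wherever $\dot w(g) > 0$ we compute, using Cauchy--Schwarz and the gradient estimate,
\begin{gather*}
\abs{\dot g(t)} = \frac{\abs{\dot f(t)}}{\dot w(g(t))} \leq \frac{\abs{\nabla u}(\gamma(t))}{\dot w(g(t))} \leq 1,
\end{gather*}
so integrating from $0$ to $L$ and using $g(0)=a$, $g(L)=b(i,a)$ gives
\begin{gather*}
\delta(i,a) = b(i,a) - a \leq L \leq d.
\end{gather*}
The degeneracy of $\dot w$ at the endpoints $a, b$ is handled exactly as in the proof of Theorem \ref{grad_est_neg}, by first shrinking to $[\min u + \xi, \max u - \xi]$ and then letting $\xi \to 0$.

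Finally, by the diameter lower bound proved in Section \ref{s:diamter}, $\delta(i,a) \geq \bar\delta(n,k,\lambda_{1,p})$, so $\bar\delta(n,k,\lambda_{1,p}) \leq d$. Because $\bar\delta$ is strictly decreasing in $\lambda$ (as recorded just before Proposition \ref{prop_barlambda}), its inverse $\bar\lambda(n,k,\cdot)$ is strictly decreasing in $\delta$, and applying it to the inequality above yields $\lambda_{1,p} \geq \bar\lambda(n,k,d)$, which is the claim.

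The genuinely non-routine step is the first one: guaranteeing that a model with $m(i,a) = u^\star$ actually exists when $\alpha > \bar\alpha$. In that regime the models $w_{1,a}$, $w_{2,a}$, $w_{3,a}$ cover the interval $[m(1,0),1]$ of possible maxima, and the reason $u^\star$ cannot drop below $m(1,0)$ is the volume comparison argument of Section \ref{sec_max}, which compares a ball around the minimum point of $u$ with the mass of $w$ near $a = 0$ in the hyperbolic-type model $i=1$. Once this matching is in place, the geodesic estimate and the monotone inversion are essentially formal.
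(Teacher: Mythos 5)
Your proposal is correct and takes essentially the same route as the paper: match the range of $u$ with a model $w_{i,a}$ via Corollary \ref{cor_maxfit} (your case split $\alpha\leq\bar\alpha$ versus $\alpha>\bar\alpha$ is exactly how that corollary is obtained), run the geodesic argument with Theorem \ref{grad_est_neg} to get $d\geq \delta(i,a)\geq \bar\delta(n,k,\lambda_{1,p})$, and invert the monotone diameter--eigenvalue relation of Proposition \ref{prop_barlambda}. The paper's proof additionally exhibits the collapsing warped products showing the bound is sharp, but that is not required for the stated inequality.
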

\begin{proof}
 To begin with, we rescale $u$ in such a way that $\min\{u\}=-1$ and $0<u^\star=\max\{u\}\leq 1$. By Corollary \ref{cor_maxfit}, we can find a solution $w^{p,\lambda}_{k,n,i,a}$ such that $\max\{u\}=\max\{w \ \text{ on }\ [a,b(a)]\}=m(k,n,i,a)$.

Consider a minimum point $x$ and a maximum point $y$ for the function $u$, and consider a unit speed minimizing geodesic (of length $l\leq d$) joining $x$ and $y$. Let $f(t)\equiv u(\gamma(t))$, and choose some $I\subseteq [0,l]$ in such a way that $I\subseteq \dot f^{-1} (0,\infty)$ and $f^{-1}$ is well defined in a subset of full measure of $[-1,u^\star]$. Then, by a simple change of variables and an easy application of the gradient comparison Theorem \ref{grad_est_neg}, we get
 \begin{gather*}
  d\geq \int_0 ^l dt \geq \int_I dt \geq \int_{-1}^{u^\star} \frac{dy}{\dot f (f^{-1}(y))}\geq\int_{-1}^{u^\star} \frac{dy}{\dot w (w^{-1}(y))}=\\
=\int_a^{b(a)} 1 dt = \delta(k,n,i,a)\geq \bar \delta(n,k,\lambda)\ ,
 \end{gather*}
where the last inequality follows directly from the Definition \ref{deph_delta}. This and Proposition \ref{prop_barlambda} yield immediately to the estimate.

Sharpness can be proved with the following examples. Fix $n$, $k$ and $d$, and consider the family of manifolds $M_i$ defined by the warped product
\begin{gather}
 M_i = [-d/2,d/2]\times_{i^{-1} \tau_3} S^{n-1}\, ,
\end{gather}
where $S^{n-1}$ is the standard $n$-dimensional Riemannian sphere of radius $1$. It is easy to see that the diameter of this manifold satisfies
\begin{gather*}
 d<d(M_i)\leq \sqrt{d^2 + i^{-2}\pi^2 \tau_3(d/2)^2 }\, ,
\end{gather*}
and so it converges to $d$ as $i$ converges to infinity. Moreover, using standard computations it is easy to see that the Ricci curvature of $M_i$ is bounded below by $(n-1)k$ and that the boundary $\partial M_i = \{a,b\} \times S^{n-1}$ of the manifold is geodesically convex (for a detailed computation, see for example \cite[Section 5]{milman}).

As mentioned in Remark \ref{rem_w}, the function $u(t,x)= w_{3,d/2}(t) $ is a Neumann eigenfunction of the $p$-Laplace operator relative to the eigenvalue $\bar \lambda$. Since evidently the function $\bar \lambda (n,k,d)$ is continuous with respect to $d$, sharpness follows easily.
\end{proof}

\subsection*{Acknowledgments} We thank Dr. Songting Yin for pointing out the regularity issue explained in Remark \ref{rem_reg}.

\bibliographystyle{aomalpha}
\bibliography{nava_bib}

\end{document}